\newtheorem*{lemLnstructure}{Lemma \ref{lem:Lnstructure}}
\begin{document}

\begin{abstract}
This paper is a continuation of \cite{zakharevich10}, in which we defined the notion of a polytope complex and its $K$-theory.  In this paper we produce formulas for the delooping of a simplicial polytope complex and the cofiber of a morphism of simplicial polytope complexes.  Along the way we also prove that the (classical and higher) scissors congruence groups of polytopes in a homogeneous $n$-manifold (with sufficient geometric data) are determined by its local properties.
\end{abstract}

\title{Simplicial Polytope Complexes and Deloopings of $K$-theory}
\author{Inna Zakharevich}
\date{}
\maketitle

\section{Introduction}

This paper is a continuation of the work started in \cite{zakharevich10}, which introduced the concept of a polytope complex and its $K$-theory.  The goal of that paper was to define $K$-theory in such a way that on $K_0$ we had the scissors congruence group of the polytope complex.  More concretely, a polytope complex $\C$ is a small double category, which vertically has a Grothendieck topology, and horizontally is a groupoid.  Given a polytope complex $\C$, we can produce a Waldhausen category $\SC(\C)$ such that $K_0(\SC(C))$ is the free abelian group generated by objects of $\C$ under the two relations $[A] = [B]$ if $A$ is horizontally isomorphic to $B$, and $A = \sum_{i=1}^n [A_i]$ if the $A_i$'s are disjoint (have the vertically initial object as their product) and $\{\bd A_i&\rSub&A\ed\}_{i=1}^n$ is a covering family of $A$ in the vertical topology in $\C$.

However, this definition of $K$-theory is problematic from a computational standpoint, as it relies on a Waldhausen category which does not come from an exact category (in the sense of Quillen, \cite{quillen73}), does not have a cylinder functor (as in \cite{waldhausen83}, section 1.6) and which is not good (in the sense of To\"en, \cite{toenvezzosi04}).  This means that very few computational techniques are directly available for analyzing this problem, as most approaches covered in the literature depend on one of these properties.

In this paper we produce several computational results for the $K$-theory of polytope complexes.  The majority of this paper is an analysis of Waldhausen's $S_\dot$-construction in the particular case of the $K$-theory of a polytope complex.  In this case it turns out that for a polytope complex $\C$ we can find a polytope complex $s_n\C$ such that $|wS_n\SC(\C)| \simeq |w\SC(s_n\C)|$.  We can make this construction compatible with the simplicial structure maps from Waldhausen's $S_\dot$-construction, and therefore construct an $S_\dot$-construction directly on the polytope level.

However, as the $S_\dot$-construction adds an extra simplicial dimension, it becomes necessary to be able to define the $K$-theory of a simplicial polytope complex $\C_\dot$. (Here, as well as in the rest of this paper, we consider a simplicial polytope complex to be a simplicial object in the category of polytope complexes; see section \ref{sec:thick} for more details.)  As the definition of $K$-theory relies on geometric realizations, we can define $K(\C_\dot)$ to be the spectrum defined by 
$$K(\C_\dot)_n = | w\underbrace{S_\dot\cdots S_\dot}_n\SC(\C_\dot)|.$$
By analyzing the $S_\dot$-construction on $\SC(\C_\dot)$ we obtain the following computation of the delooping of the $K$-theory of $\C_\dot$:

\begin{theorem}
Let $\C_\dot$ be a simplicial polytope complex.  Let $\sigma\C_\dot$ be the simplicial polytope complex given by the bar construction.  More concretely, we define
$$(\sigma\C_\dot)_n = \underbrace{\C_n\vee \C_n\vee \cdots \vee C_n}_n,$$
with the simplicial structure maps defined in analogously to the usual bar construction.  Then $\Omega K(\sigma \C_\dot) \simeq K(\C_\dot)$.
\end{theorem}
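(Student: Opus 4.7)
My plan is to realize $\sigma\C_\dot$ as the diagonal of the bisimplicial polytope complex $(p,q) \mapsto \C_q^{\vee p}$, with the $p$-direction being the bar construction for the coproduct $\vee$ and the $q$-direction inherited from $\C_\dot$. After applying $K$, I expect this to become the usual bar construction on the group-like $H$-spectrum $K(\C_\dot)$, whose realization deloops $K(\C_\dot)$; looping then gives the theorem.

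The first ingredient is that $K$ sends wedges of polytope complexes to products: $K(\C \vee \D) \simeq K(\C) \times K(\D)$. At the Waldhausen level, $\SC(\C \vee \D)$ should split as $\SC(\C) \times \SC(\D)$, because an object of $\C \vee \D$ lies in a single summand and so its vertical coverings and horizontal isomorphisms split accordingly, and Waldhausen $K$-theory of a product of Waldhausen categories is the product. Iterating, $K(\C_n^{\vee n}) \simeq K(\C_n)^n$. The second ingredient is that $K$ of the diagonal $\sigma\C_\dot$ is, up to homotopy, the realization of the bisimplicial spectrum $(p,q) \mapsto K(\C_q)^p$; realizing in the $q$-direction first and using that finite products commute with geometric realization, this reduces to the simplicial spectrum $[p] \mapsto K(\C_\dot)^p$, which is precisely the bar construction on $K(\C_\dot)$ under the $H$-structure induced by $\vee$. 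Since $K(\C_\dot)$ is group-like, the standard delooping $\Omega B E \simeq E$ for connective group-like $E$ then yields $\Omega K(\sigma\C_\dot)\simeq K(\C_\dot)$.

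The main obstacle is the second ingredient. The construction $K(\C_\dot)_n = |wS_\dot\cdots S_\dot\SC(\C_\dot)|$ is already a (multi)simplicial realization, so I will need to justify a Fubini-style interchange between the $\sigma$-direction and the internal simplicial directions, and verify that the simplicial face maps of $\sigma$ (which mix the coproduct structure on each $\C_n$ with the simplicial operators of $\C_\dot$) assemble into the bar simplicial structure on $K(\C_\dot)$ only after passing to $K$. In particular, verifying that $[p]\mapsto K(\C_\dot)^p$ is a Segal-special simplicial spectrum (so that its realization is genuinely the classifying space and not some cofiber) is what will require the most care, and is where the results of the paper on interaction between wedges, $\SC$, and the $S_\dot$-construction will have to be leveraged in detail.
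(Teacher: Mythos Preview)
Your approach is sound but takes a genuinely different route from the paper.

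The paper does not invoke the bar-construction delooping of a group-like $H$-space at all. Instead, it builds a polytope-level model $s_\dot\C$ of Waldhausen's $S_\dot$-construction: Sections~4--6 show that $\SC(s_n\C)$ is exactly equivalent to a simplification $L_n\SC(\C)$ of $S_n\SC(\C)$, so that the alternative spectrum $\tilde K(\C_\dot)_n := |Nw\SC(s_\dot^{(n)}\C_\dot)|$ agrees with $K(\C_\dot)$. Then (Corollary~\ref{cor:sigmaA}) one has $\tilde K(\sigma\C_\dot)_n = \tilde K(\C_\dot)_{n+1}$ by a one-line index shift, so $K(\sigma\C_\dot)$ is literally a shift of $K(\C_\dot)$ and the delooping is Waldhausen's own. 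Finally the approximation results of Section~8 (wide subcomplexes with sufficiently many covers) replace $s_n\C$ by the constant objects $\C^{\vee n}$, yielding the bar formula in the statement.

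Your argument instead identifies $K(\sigma\C_\dot)_n$ with the realization of $[p]\mapsto K(\C_\dot)_n^{\,p}$, the bar construction on the $H$-space $(K(\C_\dot)_n,\amalg)$, and uses $\Omega BM\simeq M$ for group-like $M$. Two remarks: the Segal condition you flag as the main obstacle is in fact trivial here, since the simplicial object is $[p]\mapsto M^p$ on the nose and the Segal maps are identities; what actually needs checking is group-likeness, which holds for $n\geq 1$ because $K(\C_\dot)_n$ is then connected, and that suffices for a stable equivalence.

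What each route buys: yours is shorter for this one statement and avoids the $f_n\C$/combing/layering machinery entirely. The paper's route is not incidental, though---the $s_\dot$-model is exactly what drives the cofiber formula in the second main theorem, and it makes the equivalence $\Omega K(\sigma\C_\dot)\simeq K(\C_\dot)$ coincide \emph{by construction} with the structure maps of the Waldhausen spectrum, rather than producing an a priori different delooping. If you only want the theorem in isolation your argument is fine; if you want it to interact with the rest of the paper you would still have to identify your bar delooping with Waldhausen's $S_\dot$ delooping, which is essentially the content the paper develops.
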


See section \ref{sec:sPoly} and corollary \ref{cor:comp} for more details.

The one computational tool for Waldhausen categories which does not depend in any way on extra assumptions is Waldhausen's cofiber theorem, which, given a functor $G:\E\rightarrow \E'$ between Waldhausen categories constructs a simplicial Waldhausen category $S_\dot G$ whose $K$-theory is the cofiber of the map $K(G):K(\E)\rightarrow K(\E')$.  By passing this computation down through the polytope complex construction of $S_\dot$ we find the following formula for the cofiber of a morphism of simplicial polytope complexes.

\begin{theorem}
Let $g:\C_\dot\rightarrow \D_\dot$ be a morphism of simplicial polytope complexes.  We define a simplicial polytope complex $(\D/g)_\dot$ by setting 
$$(\D/g)_n = \D_n \vee \underbrace{\C_n\vee C_n\vee\cdots \vee C_n}_n.$$
The simplicial structure maps are defined as for $\D_\dot\vee \sigma \C_\dot$, except that $\partial_0$ is induced by the three morphisms
$$\partial_0:\D_n\rightarrow \D_{n-1} \qquad \partial_0g_n:\C_n\rightarrow \D_{n-1} \qquad 1:\C^{\vee n-1} \rightarrow \C^{\vee n-1}.$$
Then 
$$K(\C_\dot) \xrightarrow{K(g)} K(\D_\dot) \rightarrow K((\D/g)_\dot)$$
is a cofiber sequence of spectra.
\end{theorem}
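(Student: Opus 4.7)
The plan is to deduce this from Waldhausen's cofiber theorem by realizing it at the polytope complex level through a relative version of the $s_\bullet$-construction developed earlier in the paper. Recall that for a Waldhausen functor $G\colon\E\to\E'$, Waldhausen's $S_\dot G$ construction records in simplicial degree $n$ one ``basepoint'' object of $\E'$ together with $n$ successive subquotients in $\E$, and its $K$-theory computes the cofiber of $K(G)$. I would first extend $s_n$, which models $S_n$ on a single polytope complex via $|wS_n\SC(\C)|\simeq |w\SC(s_n\C)|$, to a relative construction $s_n g$ on a morphism $g\colon\C\to\D$ such that $|w\SC(s_n g)|\simeq |wS_n\SC(g)|$.

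The main structural observation is that since an object of $S_nG$ carries one element of $\E'$ and $n$ elements of $\E$ (as subquotients), the polytope complex $s_ng$ should be the wedge $\D\vee \C^{\vee n}$, recovering $(\D/g)_n = \D_n \vee \C_n^{\vee n}$. Under this identification, the face and degeneracy maps inherited from Waldhausen's $S_\dot G$ act on the $\C^{\vee n}$ factors exactly as in the bar construction $\sigma\C_\dot$ (the $\D$-summand sitting inertly at one end), matching the first part of the description of the simplicial structure. The face map $\partial_0$ corresponds to forgetting the lowest filtration step: the bottom subquotient, an element of $\C$, gets absorbed into the basepoint, an element of $\D$, precisely via $g$. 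This yields the three-component formula for $\partial_0$ in the theorem: $\partial_0$ on the $\D$-summand, $\partial_0 g_n$ pushing the lowest $\C$-summand into $\D$, and the identity on the remaining $n-1$ $\C$-summands.

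The main obstacle will be setting up the relative $s_n$-construction so that the equivalences assemble compatibly with the additional simplicial direction from $\C_\dot$ and $\D_\dot$ themselves, as well as with the iterated $S_\dot$-constructions that define the spectrum structure of $K$. This requires care because $s_n g$ must be functorial in $g$ and natural with respect to all simplicial structure maps, producing a bisimplicial (and after iteration, multisimplicial) polytope complex whose $K$-theory is computed by the diagonal. Once this bookkeeping is in place, Waldhausen's cofiber theorem applied to $\SC(g_n)$ degreewise, combined with the realization lemma and the equivalence $|w\SC((\D/g)_\dot)|\simeq |wS_\dot\SC(g)|$, yields the cofiber sequence of spectra $K(\C_\dot)\to K(\D_\dot)\to K((\D/g)_\dot)$.
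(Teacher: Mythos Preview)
Your high-level strategy matches the paper's: realize Waldhausen's $S_\dot G$ at the polytope-complex level and then invoke his cofiber theorem together with a realization/fiber-lemma argument. Where you diverge is in jumping directly to the model $(\D/g)_n = \D_n \vee \C_n^{\vee n}$.

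The paper does not establish that $\SC(\D\vee\C^{\vee n})$ models $S_n\SC(g)$ directly; in fact these are not equivalent Waldhausen categories. Instead it proceeds in two separate steps. First (Section~\ref{sec:cofiber}, Proposition~\ref{prop:cofibers}) it builds a \emph{thick} model $(\D/g)_n = f_{n+1}\D_n \vee s_n\C_n$: the combing of Section~\ref{sec:combing} shows that an object of $S_n\SC(g)$ decomposes into a strand in $W_{n+1}\SC(\D)\simeq \SC(f_{n+1}\D)$ together with strands in the $W_i\SC(\C)\simeq \SC(f_i\C)$, and it is this model that is literally the pullback defining Waldhausen's $S_\dot\SC(g)$. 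The cofiber sequence is then obtained for the thick model via the multisimplicial fiber lemma (Lemma~\ref{lem:multfib}). Only afterward (Corollary~\ref{cor:comp}) is the thick model replaced by $\D_n\vee\C_n^{\vee n}$, using the independent approximation result that the inclusion of $\C$ into $f_k\C$ as constant objects is a wide subcomplex with sufficiently many covers (Proposition~\ref{lem:widetalls.}), hence a $K$-equivalence. Your ``subquotient'' heuristic is exactly right as intuition, but turning it into a proof requires interposing both of these steps; what the paper's two-stage route buys is a clean separation between the combinatorial identification with Waldhausen's construction and the homotopical simplification, with the latter packaged as a general-purpose tool rather than something bespoke to the cofiber formula.
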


See section \ref{sec:cofiber} and corollary \ref{cor:comp} for more details.

As a consequence of the techniques in section 8 we also get the following proposition:

\begin{proposition} \lbl{prop:XsimeqY}
Let $X$ and $Y$ be homogeneous geodesic $n$-manifolds with a preferred open cover in which the geodesic connecting any two points in a single set is unique.  If there exist preferred open subsets $U\subseteq X$ and $V\subseteq Y$ and an isometry $\varphi:U\rightarrow V$ then the scissors congruence spectra of $X$ and $Y$ are equivalent.
\end{proposition}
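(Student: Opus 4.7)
The strategy is to establish the following local-to-global principle: the scissors congruence spectrum of a homogeneous geodesic $n$-manifold with a preferred open cover depends only on the isometry type of an arbitrarily small preferred open set. Granted this, the isometry $\varphi:U\rightarrow V$ identifies the local data of $X$ and $Y$, and homogeneity ensures that every preferred open of $X$ (resp.\ $Y$) contains preferred neighborhoods isometric to small preferred opens inside $U$ (resp.\ $V$), so $K(X)\simeq K(Y)$.

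The first step is to associate to each preferred open $W$ a polytope complex whose objects are the polytopes supported in $W$; the uniqueness of geodesics on $W$ makes these polytopes, their faces, and their scissors relations unambiguous, and inclusions of preferred opens induce morphisms of polytope complexes. Taking $W=X$ recovers the polytope complex defining the scissors congruence spectrum of $X$. The second step is to express $K(X)$ as a homotopy colimit of the spectra associated to preferred opens, using the cofiber theorem iterated along a \v Cech-type nerve of the preferred cover. Each building block should be a Mayer--Vietoris cofiber sequence of the form $K(W\cap W')\rightarrow K(W)\vee K(W')\rightarrow K(W\cup W')$, produced by applying the preceding cofiber theorem to the evident morphism of simplicial polytope complexes constructed from the pushout of preferred opens. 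Inducting over finite unions of preferred opens, and then passing to the whole cover, expresses $K(X)$ as the homotopy colimit of the diagram of local $K$-theory spectra.

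Finally, the homogeneity hypothesis, combined with the isometry $\varphi$, produces a canonical equivalence between the diagrams of arbitrarily small local pieces for $X$ and for $Y$: every sufficiently small preferred open of $X$ is isometric, via a global isometry of $X$ combined with $\varphi$, to a sufficiently small preferred open of $Y$. Passing to homotopy colimits, this identification yields $K(X)\simeq K(Y)$.

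The main obstacle is the homotopy colimit description in the second step. At the level of $K_0$ this reduces to the classical scissors subdivision principle, stating that every polytope in $X$ can be scissors-decomposed into pieces each lying in some preferred open; extending this to an equivalence of spectra requires careful bookkeeping of intersections in the cover, and is precisely the sort of statement that the cofiber formula for simplicial polytope complexes is designed to handle. Once the local-to-global description is in place, the isometry $\varphi$ transfers the entire diagram of local data from $X$ to $Y$, completing the proof.
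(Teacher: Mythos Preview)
Your approach is substantially more elaborate than what is needed, and the central step---the Mayer--Vietoris cofiber sequence and the resulting homotopy colimit description of $K(X)$---is neither proved in the paper nor easy to extract from the cofiber theorem you invoke. The cofiber formula computes the cofiber of a \emph{single} morphism of simplicial polytope complexes; turning that into a \v Cech-style gluing statement of the form $K(W\cap W')\to K(W)\vee K(W')\to K(W\cup W')$ is a separate theorem that you would have to prove, and it is not clear it holds at the level of polytope complexes as defined here. Even granting it, your final step requires matching not just individual small opens but the entire combinatorics of the covers (inclusions and overlaps) between $X$ and $Y$, and homogeneity plus a single isometry $\varphi:U\to V$ does not obviously furnish that.

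The paper avoids all of this with a much more direct argument. One shows that the polytope complex $\C_U$ of polytopes contained in the single preferred open $U$ is a \emph{tall} full subcomplex of $\C_X$ with \emph{sufficiently many covers}: any polytope in $X$ can be barycentrically subdivided into simplices small enough to be carried into $U$ by an ambient isometry (this is where homogeneity is used). Proposition~\ref{lem:widetalls.} then gives immediately that $K(\C_U)\to K(\C_X)$ is an equivalence, and likewise $K(\C_V)\to K(\C_Y)$. The isometry $\varphi$ induces an isomorphism $\C_U\cong\C_V$, so the zigzag
\[
\C_X \longleftarrow \C_U \xrightarrow{\ \C_\varphi\ } \C_V \longrightarrow \C_Y
\]
consists entirely of $K$-equivalences. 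No homotopy colimit or Mayer--Vietoris machinery is required; the whole weight is carried by the approximation criterion of Section~8.
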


The organization of this paper is as follows.  Section 2 covers the notation we use, as well as a basic summary of Waldhausen's $S_\dot$-construction and the results we use about it.  Section 3 defines the category of polytope complexes.  Sections 4 and 5 concern the construction of $s_\dot\C$.  Section 6 describes the fundamental computation necessary for the first theorem, and section 7 the second.  Section 8 wraps up the paper with a basic approximation result about simplicial polytope complexes, which allows us to simplify the formulas computed in sections 6 and 7 to the ones used here.

\section{Preliminaries}

\subsection{Notation}

In this paper, $\C$ and $\D$ will always denote polytope complexes, while $\E$ will be a general Waldhausen category.  We denote the category of polytope complexes and polytope functors by $\PrePolyCpx$.  In a double category (or a polytope complex) we will denote vertical morphism by dotted arrows $\bd A & \rSub & B\ed$ and horizontal morphisms by solid arrows $\bd A & \rTo & B\ed$; functors and morphisms not in a double category or $\SC(\C)$ will be denoted $X\rightarrow Y$.  If a vertical morphism in $\Tw(\C_p)$ is a covering sub-map we will denote it by $\bd A & \rCover & B\ed$.

For any two polytope complexes $\C$ and $\D$, $\C\vee \D$ denotes the polytope complex obtained by identifying the two initial objects.  For a nonnegative integer $n$, we write 
$\C^{\vee n}$ for $\bigvee_{j=1}^n \C$; $\C^{\vee 0}$ will be the trivial polytope complex with no noninitial objects.  

We will often be discussing commutative squares.  Sometimes, in order to save space, we will write a commutative square
\begin{diagram}[small]
A & \rTo & B\\
\dTo^f && \dTo_g \\
C & \rTo & D
\end{diagram}
as
$$f,g:\bd (\bd A & \rTo & B\ed) & \rTo & (\bd C & \rTo & D\ed)\ed.$$

In this paper, whenever we refer to an $n$-simplicial category we will always be referring to a functor $(\Delta^\op)^n \rightarrow \mathbf{Cat}$, rather than an enriched category.  In order to distinguish simplicial objects from non-simplicial objects, we will add a dot as a subscript to a simplicial object; thus $\C$ is a polytope complex, but $\C_\dot$ is a simplicial polytope complex.  For any functor $F$ we will write $F^{(n)}$ for the $n$-fold application of $F$.

\subsection{The $K$-theory of a Waldhausen category} \lbl{sec:S.intro}

This section contains a brief review of Waldhausen's $S_\dot$ construction for $K$-theory, originally introduced in \cite{waldhausen83}, as well as some results which are surely well-known to experts, but for which we could not find a reference.  The proofs of these results are deferred until appendix \ref{app:S.}.


Given a Waldhausen category $\E$, we define $S_n\E$ to be the category of commutative triangles defined as follows.  An object $A$ is a triangle of objects $A_{ij}$ for pairs $1\leq i \leq j \leq n$.  The diagram consists of cofibrations $\bd A_{ij} & \rCofib & A_{(i+1)j}\ed$ and morphisms $\bd A_{ij} & \rTo & A_{i(j+1)}\ed$ such that for every pair $i<j$ and any $1\leq k \leq n-i$ the induced diagram
$$
A_{ij} \longhookrightarrow  A_{(i+k)j} \longrightarrow A_{(i+k)(j+k)}
$$
is a cofiber sequence.  A morphism $\varphi:A\rightarrow B$ consists of morphisms $\varphi_{ij}:A_{ij} \rightarrow B_{ij}$ making the induced diagram commute.  Note that $S_0\E$ is the trivial category with one object and one morphism, and $S_1\E = \E$.  

The $S_n\E$'s assemble into a simplicial object in categories by letting the $k$-th face map remove all objects $A_{ij}$ with $i=k$ or $j=k+1$, and the $k$-th degeneracy repeat a row and column appropriately.  We can assemble the $S_n\E$'s into a simplicial Waldhausen category in the following manner.  A morphism $\varphi:A\rightarrow B\in S_n\E$ is a weak equivalence if $\varphi_{ij}$ is a weak equivalence for all $i<j$.  $\varphi$ is a cofibration if for all $i<j$ the induced morphism 
$$B_{ij} \cup_{A_{ij}} A_{(i+1)j} \longrightarrow B_{(i+1)j}$$
is a cofibration in $\E$.  Note that this means that in particular for all $i<j$ the morphism $\varphi_{ij}$ is a cofibration in $\E$.

We obtain the $K$-theory spectrum of a Waldhausen category $\E$ by defining 
$$K(E)_n = \Omega \left|wS_\dot^{(n)} \E\right|.$$
From proposition 1.5.3 in \cite{waldhausen83} we know that above level $0$ this will be an $\Omega$-spectrum.

We now turn our attention to some tools for computing with Waldhausen categories.  An exact functor of Waldhausen categories $F:\E\rightarrow \E'$, naturally yields a functor between $S_\dot$ constructions, and therefore between the $K$-theory spectra.  We are interested in several cases of such functors which produce equivalences on the $K$-theory level.

The first two examples we consider will be simply inclusions of subcategories.  While a Waldhausen category can contain a lot of morphisms which are neither cofibrations nor weak equivalences, most of these are not important.  We will say that a Waldhausen subcategory $\tilde \E$ of a Waldhausen category $\E$ is a \textsl{simplification of $\E$} if it contains all objects, weak equivalences, and cofibrations of $\E$.  As the $S_\dot$ construction only really looks at these morphisms, it is clear that the inclusion $\tilde \E \rightarrow \E$ induces the identity map $K(\tilde \E)\rightarrow K(\E)$.

Now suppose that $\hat \E$ is a subcategory of $\E$ with the property that any morphism $f\in \E$ can be factored as $hg$, with $h$ an isomorphism and $g\in \hat \E$, and such that $\hat \E$ contains the zero object of $\E$.  Then $\hat \E$ is a Waldhausen category.  Let $\hat S_n\E$ be the full subcategory of $S_n\E$ containing all objects in $S_n\hat E$.  Then  $\hat S_n\E$ is an equivalent Waldhausen subcategory of $S_n\E$, and thus that for all $n\geq 1$, 
$$\left|wS_\dot^{(n-1)}\hat S_\dot \E\right|\simeq \left|wS_\dot^{(n)} \E\right|.$$
Thus we can compute the $K$-theory of $\E$ using only morphisms from $\hat \E$ in the first level of the $S_\dot$ construction.  (For more details, see lemma \ref{lem:subobjects}.)

Now we consider pairs of adjoint functors between Waldhausen categories.  Suppose that we have an adjoint pair of exact functors $F:\E \rightleftarrows \E':G$; these produce a pair of maps $K(F): K(\E) \rightleftarrows K(\E'):K(G)$.  Generally an adjoint pair of functors produces a homotopy equivalence on the classifying space level, so naively we might expect these to be inverse homotopy equivalences.  Unfortunately, in the $S_\dot$ construction we always restrict our attention to weak equivalences in the category, so we need more information than just an adjoint pair of exact functors.  If both the unit and counit of our adjunction is a weak equivalence then we are fine, however, as the adjunction must also restrict to an adjunction on the subcategories of weak equivalences.  We call an adjoint pair of exact functors satisfying this extra condition an \textsl{exact adjoint pair}, and we say that $F$ is \textsl{exactly left adjoint} to $G$.  Given any exact adjoint pair we get a pair of inverse equivalences on the $K$-theory level.

We finish up this section with a short discussion of a simplification of the $S_\dot$ construction.  $S_n$ can be defined more informally as the category whose objects are all choices of $n-1$ composable cofibrations, together with the choices of all cofibers.  As the cofiber of a cofibration $\bd A & \rCofib & B\ed\in \E$ is a pushout, any object $A\in S_n\E$ is defined, up to isomorphism, by the diagram
\begin{diagram}
A_{11} & \rCofib & A_{21} & \rCofib & \cdots & \rCofib & A_{n1},
\end{diagram}
and any morphism $\varphi$ by its restriction to this row.  We will denote the category of such objects $F_n\E$.  We can clearly make $F_n\E$ into a Waldhausen category in a way analogous to the way we made $S_n\E$ into a Waldhausen category.  However, these do not assemble easily into a simplicial Waldhausen category, as $\partial_0$, the $0$-th face map, must take cofibers, and this is only defined up to isomorphism.  Thus while $F_n\E$ is easier to work with on each level, $S_n\E$ is often easier to work with when working with the simplicial structure.  (Note that if in $\E$ all cofibrations come with a canonical choice of cofiber then the $F_n$'s automatically assemble into a simplicial Waldhausen category.  This will be exactly the case that we will be considering later in the paper.)

\section{Thickenings} \lbl{sec:thick}

\begin{definition}
Let $\C$ be a polytope complex.  The polytope complex $\C^\id$ is the full subcategory of $\Tw(\C_p)$ containing all objects $\SCob{a}{i}\in \Tw(\C_p)$ such that for all distinct $i,j\in I$ there exists an $a\in \C$ such that $a_i\times_a a_j = \emptyset$.  The topology on $\C^\id_v$ is defined pointwise.  More precisely, let $X = \SCob{x}{i}$, and $X_\alpha = \{x^{(\alpha)}_j\}_{j\in J_\alpha}$.  We say that $\{p^\alpha:\bd X_\alpha & \rSub & X \ed\}_{\alpha\in A}$ is a covering family if for each $i\in I$ the family $\{P^\alpha_j:\bd x_j^{(\alpha)} & \rSub & x_i\ed \}_{j\in (p^\alpha)^{-1}(i), \alpha\in A}$ is a covering family in $\C$.
\end{definition}

It is easy to check that $-^\id$ is in fact a functor $\PrePolyCpx \rightarrow \PrePolyCpx$.  It will turn out that $-^\id$ is a monad on $\PrePolyCpx$, and that $\SC:\PrePolyCpx \rightarrow \WaldCat$ factors through the inclusion $\PrePolyCpx \rightarrow \Kl(-^\id)$ (the Kleisli category of this monad).  This factorization provides us with extra morphisms between polytopes, which will be exactly the morphisms we need later when we start doing calculations with face maps in the $S_\dot$ construction.

We start by considering the monad structure of $-^\id$.  We have a natural inclusion $\eta_\C:\C\rightarrow \C^\id$ which includes $\C$ into $\C^\id$ as the singleton sets; these assemble into a natural transformation $\eta:1\Rightarrow -^\id$.  This transformation is not a natural isomorphism, even through, morally speaking, $\C^\id$ ought to have the same $K$-theory as $\C$ (as it contains objects which are formal sums of objects of $\C$).  It turns out that once we pass to $\WaldCat$ by $\SC$ we can find a natural ``almost inverse'': an exact left adjoint.

\begin{lemma} \lbl{lem:monad}
The functor $-^\id$ is a monad on $\PrePolyCpx$.
\end{lemma}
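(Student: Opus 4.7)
The plan is to exhibit an explicit multiplication natural transformation $\mu : (-^{\id})^{\id} \Rightarrow -^{\id}$ given by ``flattening'' nested collections, and then verify the three monad axioms.

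First I would set up $\mu_\C$. An object of $(\C^\id)^\id$ has the form $\mathbf{X} = \{X_j\}_{j\in J}$ where each $X_j = \{a^{(j)}_i\}_{i \in I_j}$ is an object of $\C^\id$, satisfying the pairwise disjointness in $\C^\id$. I would define
$$\mu_\C(\mathbf{X}) = \{a^{(j)}_i\}_{(j,i) \in \coprod_{j \in J} I_j},$$
the disjoint union of the inner index sets. The key verification is that this collection lies in $\C^\id$, i.e. that pairwise disjointness holds in $\C$. For indices $(j,i), (j,i')$ inside the same $X_j$ this is immediate from $X_j \in \C^\id$. For $(j,i),(j',i')$ with $j \neq j'$, I would unpack the hypothesis that $\mathbf{X} \in (\C^\id)^\id$: there is an ambient $X \in \C^\id$ in which $X_j$ and $X_{j'}$ have product equal to the initial object of $\C^\id$; translating the definition of products in $\Tw(\C_p)$ restricted to $\C^\id$ (which are computed pointwise on the indexing sets) back into $\C$ yields precisely the required ambient $a \in \C$ witnessing $a^{(j)}_i \times_a a^{(j')}_{i'} = \emptyset$. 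On morphisms, $\mu_\C$ is the induced map on disjoint unions of index sets together with the original horizontal/vertical morphisms; continuity of $\mu_\C$ with respect to the pointwise Grothendieck topology reduces immediately to the defining characterization of covering families.

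Next I would verify naturality of $\mu$ (routine, since the construction is defined entirely by reindexing disjoint unions, which is functorial in polytope functors) and check the three monad diagrams on objects:
$$\mu_\C \circ \eta_{\C^\id} = \id_{\C^\id}, \qquad \mu_\C \circ (\eta_\C)^\id = \id_{\C^\id}, \qquad \mu_\C \circ \mu_{\C^\id} = \mu_\C \circ (\mu_\C)^\id.$$
The first composite wraps each $X \in \C^\id$ into the singleton $\{X\}$ and then flattens, recovering $X$; the second turns $X = \{a_i\}_{i \in I}$ into $\{\{a_i\}\}_{i \in I}$ and then flattens, again recovering $X$. For associativity, an object of $((\C^\id)^\id)^\id$ is a triply-indexed family $\{\{\{a^{(j,k)}_i\}_i\}_j\}_k$; both sides of the associativity equation flatten this family to a single family indexed by the iterated disjoint union $\coprod_k \coprod_j I_{j,k}$, and the canonical bijection between the two iterated unions makes the two composites literally equal.

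The only real obstacle is the disjointness check for $\mu_\C(\mathbf{X})$: one has to be careful that the ``ambient object'' $a \in \C$ witnessing disjointness of $a^{(j)}_i$ and $a^{(j')}_{i'}$ across different $j, j'$ actually exists, rather than merely an ambient object in $\C^\id$. This is where the pointwise definition of products and the topology in $\C^\id$ does the real work, so I would spell it out carefully; everything else is formal naturality and bookkeeping on disjoint unions.
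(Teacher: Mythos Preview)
Your proposal is correct and follows essentially the same approach as the paper: the paper defines $\mu_\C$ by exactly the same flattening formula and then simply asserts that checking the monad axioms is ``a simple definition check.'' Your write-up is in fact more thorough than the paper's own proof, since you explicitly isolate and address the well-definedness of $\mu_\C$ (that the flattened family satisfies the pairwise-disjointness condition required to lie in $\C^\id$), which the paper leaves entirely implicit.
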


\begin{proof}
In order to make $-^\id$ into a monad, we need to define a unit and a multiplication.  The unit $\eta:1_\PrePolyCpx \rightarrow (-^\id)$ will be the natural transformation defined on each polytope complex $\C$ by the natural inclusion $\C\rightarrow \C^\id$ given by including $\C$ as the singleton sets.  The multiplication $\mu:(-^\id)^\id \rightarrow (-^\id)$ is given by the functor ${\C^\id}^\id \rightarrow \C^\id$ given on objects by
$$\{\{a^{(i)}_j\}_{j\in J_i}\}_{i\in I} \longmapsto \{a^{(i)}_j\}_{(i,j)\in \coprod_{i\in I}J_i}.$$
It is a simple definition check to see that with these definitions $(-^\id,\eta,\mu)$ is a monad.
\end{proof}

\begin{lemma} \lbl{lem:splitup} There exists a natural transformation
  $\nu:\SC(-^\id) \Rightarrow \SC(-)$ which for every polytope complex $\C$ is exactly left adjoint to $\SC(\eta_\C):\SC(\C)\rightarrow \SC(\C^\id)$.  The counit of this adjunction will be the identity transformation.
\end{lemma}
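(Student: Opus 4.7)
The plan is to define $\nu_\C$ as a ``flattening'' functor. An object of $\SC(\C^\id)$ is a finite formal sum $\bigvee_\alpha X_\alpha$ whose summands $X_\alpha = \{a^{(\alpha)}_j\}_{j\in J_\alpha}$ are themselves indexed collections of objects of $\C$ (subject to the pairwise disjointness condition defining $\C^\id$). I would set
$$\nu_\C\Bigl(\bigvee_\alpha \{a^{(\alpha)}_j\}_{j\in J_\alpha}\Bigr) \;=\; \bigvee_{\alpha,\, j\in J_\alpha} a^{(\alpha)}_j,$$
simply forgetting the nested grouping; on morphisms, which decompose into a horizontal isomorphism followed by a vertical sub-inclusion, both pieces have obvious componentwise descriptions that flatten in the same way. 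Naturality of $\nu$ in $\C$ is immediate, since a polytope functor $\C\to\D$ induces $\C^\id\to\D^\id$ pointwise on indexing sets, and flattening commutes with this.

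First I would verify that each $\nu_\C$ is an exact functor of Waldhausen categories. The pointwise topology on $\C^\id_v$ was arranged precisely so that covering families in $\C^\id$ correspond to covering families of the flattened collections in $\C$, so cofibrations and weak equivalences in $\SC(\C^\id)$ are sent to cofibrations and weak equivalences in $\SC(\C)$. Next, the equality $\nu_\C\circ \SC(\eta_\C) = 1_{\SC(\C)}$ is direct: $\SC(\eta_\C)$ inserts $a\in\C$ as the singleton $\{a\}$, and $\nu_\C$ unpacks a singleton back to $a$. This identifies the counit as the identity. For the unit, on $X = \bigvee_\alpha \{a^{(\alpha)}_j\}_{j\in J_\alpha}\in \SC(\C^\id)$ I would take the canonical comparison $u_X: X \to \bigvee_{\alpha,\,j} \{a^{(\alpha)}_j\}$ that splits each indexed set into its singletons; the triangle identities then follow because flattening is strictly idempotent on singletons and is the identity on fully split formal sums.

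The main obstacle, and essentially the only content of the word \emph{exact} in ``exact adjoint pair'' here, is to check that $u_X$ is a weak equivalence in $\SC(\C^\id)$. I would argue that $\{a_j\}_{j\in J}$ is in fact horizontally isomorphic in $\SC(\C^\id)$ to the formal sum $\bigvee_j \{a_j\}$: the singletons form a covering family of $\{a_j\}_{j\in J}$ by the pointwise definition of the topology on $\C^\id_v$, and the disjointness condition defining $\C^\id$ is exactly what is needed to apply the relation in $\SC$ from \cite{zakharevich10} that identifies an object with the formal sum of a pairwise disjoint covering family. Thus $u_X$ is actually an isomorphism, hence a weak equivalence, so the adjoint pair is exact. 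The delicate point, which would need unpacking of the $\SC$ construction, is confirming that this identification lifts from an equality in $K_0$ to an honest horizontal isomorphism in $\SC(\C^\id)$ that is natural in $X$.
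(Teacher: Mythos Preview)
Your definition of $\nu_\C$ as the flattening functor and the identification of the counit with the identity are both correct and match the paper exactly. The paper establishes the adjunction by exhibiting an initial object of $(B\downarrow \SC(\eta_\C))$, whereas you write down unit and counit directly; these are equivalent and either is fine.

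The gap is in your treatment of the unit. You claim that $u_X$ is a horizontal \emph{isomorphism} in $\SC(\C^\id)$, citing the relation from \cite{zakharevich10} that identifies an object with the formal sum of a pairwise disjoint covering family. That relation, however, holds only at the level of $K_0$; it does not give an isomorphism in the category $\SC(\C^\id)$. Concretely, if $B=\{B_1\}$ with $B_1=\{b_1,b_2\}$, then $B$ has a one-element indexing set while $\SC(\eta_\C)\nu_\C(B)=\{\{b_1\},\{b_2\}\}$ has a two-element indexing set, so no shuffle can be an isomorphism between them. Your own hedge at the end (``confirming that this identification lifts \ldots\ to an honest horizontal isomorphism'') is exactly the point that fails.

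What is true, and what the paper uses, is that $u_X$ is the morphism in $\SC(\C^\id)$ represented by the pure covering sub-map
\[
\{B_j\}_{j\in J}\ \longleftarrow\ \{\{b^j_k\}\}_{(j,k)},
\]
which is an acyclic cofibration and hence a weak equivalence. That is precisely the requirement for an exact adjunction, so once you replace ``isomorphism'' by ``pure covering sub-map, hence weak equivalence'' your argument goes through. Note also that the covering property here uses the pointwise topology on $\C^\id_v$ that you already invoked for exactness of $\nu_\C$; the disjointness condition in the definition of $\C^\id$ plays no role in this step.
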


\begin{proof}
  Fix a polytope complex $\C$, and let $G= \SC(\eta_\C)$.  To show that $G$ has
  a left adjoint it suffices to show that for any $B\in \SC(\C^\id)$,
  $(B\downarrow G)$ has an initial object.  If we write $B = \SCob{B}{j}$, where
  $B_j = \{b^j_k\}_{k\in K_j}$ then the pure covering sub-map
  \begin{diagram}
    \SCob{B}{j} & \lCover & \{\{b^j_k\}\}_{(j,k)\in \coprod_{j\in J}K_j}
  \end{diagram}
  is the desired object; we define $\nu_\C$ to be the adjoint where $\nu_C(B) =
  \{b^j_k\}_{(j,k)\in \coprod_jK_j}$.  Then the unit is objectwise a pure
  covering sub-map --- thus a weak equivalence --- and the counit is the
  identity, as desired.  To see that these assemble into a natural
  transformation, note that $\nu_\C$ ``flattens'' each set of sets by covering
  it with a set of singletons.  By purely set-theoretic observations it is clear
  that this commutes with applying a functor pointwise to each set element, so
  $\nu$ does, indeed, assemble into a natural transformation.

  It remains to show that $\nu_\C$ is exact.  As left adjoints commute with
  colimits and $\SC(\C)$ has all pushouts, $\nu_\C$ preserves all pushouts.  The
  fact that $F$ preserves cofibrations and weak equivalences follows from the
  definition of $F$ and the fact that covering sub-maps in $\C^\id$ are defined
  pointwise.
\end{proof}

Now consider the Kleisli category of this monad, $\Kl(-^\id)$.  We have a
natural inclusion $\iota:\PrePolyCpx \rightarrow \Kl(-^\id)$ which is the identity
on objects, and takes a polytope functor $F:\C \rightarrow \D$ to the
functor $\eta_\D F$.  Informally
speaking, $\Kl(-^\id)$ is the category of sets of polytopes that can be
``added'', in the sense that we can think of a covering sub-map $\bd\SCob{a}{i}
& \rCover & \SCob{b}{j}\ed$ as expressing the relation $\sum_{i\in I}a_i =
\sum_{j\in J} b_j$.  Using the functor given by lemma \ref{lem:splitup} we can extend $\SC$ to a functor on $\Kl(-^\id)$ rather than just on $\PrePolyCpx$.

\begin{lemma}
  The functor $\SC:\PrePolyCpx \rightarrow \WaldCat$ factors through $\iota$.
\end{lemma}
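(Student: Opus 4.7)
The plan is to construct an explicit lift $\widetilde{\SC}: \Kl(-^\id) \to \WaldCat$ which agrees with $\SC$ on objects and whose precomposition with $\iota$ is $\SC$. A morphism $\C \to \D$ in $\Kl(-^\id)$ is, by definition, a polytope functor $F: \C \to \D^\id$, and I will define $\widetilde{\SC}(F)$ to be the composite
$$\widetilde{\SC}(F) = \nu_\D \circ \SC(F): \SC(\C) \to \SC(\D^\id) \to \SC(\D),$$
which is an exact functor since both factors are (the first by functoriality of $\SC$, the second by Lemma \ref{lem:splitup}).

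To check that $\widetilde{\SC} \circ \iota = \SC$, I would compute on a polytope functor $F: \C \to \D$: since $\iota(F) = \eta_\D \circ F$, we obtain
$$\widetilde{\SC}(\iota(F)) = \nu_\D \circ \SC(\eta_\D) \circ \SC(F) = \SC(F),$$
where the second equality uses that the counit $\nu_\D \circ \SC(\eta_\D)$ of the adjunction in Lemma \ref{lem:splitup} is the identity. Specializing to $F = 1_\C$ shows simultaneously that the Kleisli identity $\eta_\C$ is sent to the identity.

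The real content lies in checking that composition is preserved. Given $F: \C \to \D^\id$ and $G: \D \to \E^\id$ in $\Kl(-^\id)$, their Kleisli composite is $\mu_\E \circ G^\id \circ F$, where $G^\id$ denotes the image of $G$ under the monad $-^\id$. Applying naturality of $\nu$ to the polytope functor $G$ gives
$$\nu_{\E^\id} \circ \SC(G^\id) = \SC(G) \circ \nu_\D,$$
so the desired equation $\widetilde{\SC}(G \circ F) = \widetilde{\SC}(G) \circ \widetilde{\SC}(F)$ reduces to the identity $\nu_\E \circ \SC(\mu_\E) = \nu_\E \circ \nu_{\E^\id}$ of exact functors $\SC(\E^{\id\id}) \to \SC(\E)$.

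This last identity is the main (and essentially only) obstacle, and I expect to dispatch it directly from the explicit formulas of Lemmas \ref{lem:monad} and \ref{lem:splitup}. On a triply-nested object $\{\{\{a^{(l,m)}_n\}_n\}_m\}_l$ of $\SC(\E^{\id\id})$, both composites return the singly-indexed set $\{a^{(l,m)}_n\}_{(l,m,n)}$, differing only in the order of flattening: $\SC(\mu_\E)$ collapses the two innermost levels, whereas $\nu_{\E^\id}$ collapses the two outermost, and in each case the outer $\nu_\E$ finishes the flattening. The verification is therefore a bookkeeping exercise on coproducts of indexing sets, with no conceptual obstruction beyond keeping track of the nested sums.
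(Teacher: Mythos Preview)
Your approach is essentially the same as the paper's: you define $\widetilde{\SC}(F)=\nu_\D\circ\SC(F)$ and verify $\widetilde{\SC}\circ\iota=\SC$ via the counit being the identity, exactly as the paper does. The only difference is that you go further and explicitly check that $\widetilde{\SC}$ preserves Kleisli composition (reducing to $\nu_\E\circ\SC(\mu_\E)=\nu_\E\circ\nu_{\E^\id}$), a point the paper's proof simply omits; your argument for this identity is correct, though note that strictly speaking you should also remark that the two flattenings agree on morphisms, not just objects.
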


\begin{proof}
  We define a functor $\tilde \SC:\Kl(-^\id)\rightarrow \WaldCat$ by setting $\tilde\SC(\C) = \SC(\C)$ on polytope complexes $\C\in\Kl(-^\id)$, and by 
  $$\tilde\SC(F:\C\longrightarrow \D) = \nu_\D\SC(F):\SC(\C)\longrightarrow \SC(\D^\id) \longrightarrow\SC(\D).$$
  Note that given any polytope functor $F:\C \rightarrow \D$, 
  $$\tilde\SC(\iota(F)) = \nu_\D\SC(\eta_\D)\SC(F) = \SC(F),$$ as $\nu_\D$ is left adjoint to $\SC(\eta_\D)$ and the counit of the adjunction is the identity.  Thus $\tilde\SC\iota = \SC$, as desired.
\end{proof}

We define the category of polytope complexes, $\PolyCpx$, to be $\Kl(-^\id)$.
By an abuse of notation we will therefore consider $\SC$ to be a functor
$\PolyCpx\rightarrow \WaldCat$.

We finish up with an example of a polytope complex which is an algebra over
$-^\id$, and a polytope complex which is not an algebra over $-^\id$.  Let $\C$
be the polytope complex of nondegenerate polytopes in $\R^n$ with the Euclidean group acting
on it.  We can define a functor $\C^\id\rightarrow \C$ by mapping any set of
pairwise disjoint polytopes to the union of that set (which is well-defined if we define a polytope to be a nonempty union of simplices).  It is easy to check that
this does, in fact, make $\C$ into an algebra over $-^\id$.

Now let $\C$ be the polytope complex of rectangles in $\R^2$ whose sides are
parallel to the coordinate axes, with the group of translations acting on it.
We claim that this is \textsl{not} an algebra over $-^\id$.  Indeed, suppose
that it were, so we have a functor $F:\C^\id\rightarrow \C$.  Consider a
rectangle $R$ split into four sub-rectangles:
\begin{center}
\begin{tabular}{|c|c|}
\hline
$R_1$ & $R_2$ \\
\hline
$R_3$ & \phantom{$\displaystyle{\int}$} $R_4$ \phantom{$\displaystyle{\int}$} \\
\hline
\end{tabular}
\end{center}
We know that $F(\{R\})=R$ and $F(\{R_i\}) = R_i$.  Now consider $F(\{R_1,R_4\})$.  This must sit inside $R$, and also contain both $R_1$ and $R_4$, so it must be $R$.  Similarly, $F(\{R_2,R_3\}) = R$.  But then
$$R = F(\{R_1,R_4\}) \times_{F(\{R\})} F(\{R_2,R_3\}) = F(\{R_1,R_4\}\times_{\{R\}} \{R_2,R_3\}) = F(\emptyset) = \emptyset.$$
Contradiction.  So $\C$ is not an algebra over $-^\id$.

\section{Filtered Polytopes}

The $S_\dot$ construction considers sequences of objects included into one another.  In this section we will look at filtered objects where all of the cofibrations are actually acyclic cofibrations.

Let $W_n\SC(\C)$ be the full subcategory of $F_n\SC(\C)$ which contains all objects
\begin{diagram}
A_1 & \rAcycCofib & A_2 &     \rAcycCofib & \cdots & \rAcycCofib & A_n.
\end{diagram}
We can make $W_n\SC(\C)$ into a Waldhausen category by taking the structure
induced from $F_n\SC(\C)$.  Then $W_n\SC(\C)$ contains $\tilde W_n\SC(\C)$ --- the full subcategory of
$W_n\SC(\C)$ of all such objects which can be represented by only pure sub-maps
--- as an equivalent subcategory (by lemma \ref{lem:subobjects}).

Our goal for this section is to define a polytope complex $f_n\C$ such that $\SC(f_n\C)$ is equivalent (as a Waldhausen category) to $W_n\SC(\C)$.

\begin{definition}
Let $f_n\C$ be the following polytope complex.  An object $A\in f_n\C$ is a diagram
\begin{diagram}
A_1 & \lCover & A_2 & \lCover &\cdots & \lCover & A_n
\end{diagram}
in $\Tw(\C_p)$ such that each $A_i\in \C^\id$ and $A_1$ is a singleton set.  The vertical morphisms $p:\bd A & \rSub & B\ed$ are diagrams
\begin{diagram}[small]
A_1 & \lCover & A_2 & \lCover & \cdots & \lCover & A_n\\
\dSub_{p_1} && \dSub_{p_2} &&&&\dSub_{p_n} \\
B_1 & \lCover & B_2 &\lCover &\cdots & \lCover & B_n
\end{diagram}
in $\C^\id$, and the horizontal morphisms are defined analogously.  We put a topology on $f_n\C$ by defining a family $\{\bd X_\alpha & \rSub & X\ed\}_{\alpha\in A}$ to be a covering family if for each $i=1,\ldots, n$ the family $\{\bd X_{\alpha i} & \rSub & X_i\ed\}_{\alpha\in A}$ is a covering family in $\C^\id$.
\end{definition}

Now we construct the functors which give an isomorphism between $\tilde W_n\SC(\C)$ and $\SC(f_n\C)$.  The functor $H:\SC(f_n\C)\rightarrow \tilde W_n\SC(\C)$ simply takes an object of $\SC(f_n\C)$ to the sequence of its levelwise unions.  More formally, given an object $\SCob{a}{i}$ in $\SC(f_n\C)$, where for each $i\in I$ we have
$$a_i = \bd a_i^1 & \lCover & a_i^2 & \lCover & \cdots & \lCover & a_i^n\ed,$$
with $a_i^j\in \C^\id$, we define an object $H(\SCob{a}{i})\in\tilde W_n\SC(\C)$ by
\begin{diagram}
A_1 & \rAcycCofib & A_2 & \rAcycCofib & \cdots & \rAcycCofib & A_n
\end{diagram}
where $A_j = \coprod_{i\in I} a_i^j \in \Tw(\C_p)$.  In other words, we consider each object $a_i$ to be a diagram in $\Tw(\C_p)$ and we take the coproduct of all of these diagrams.

To construct an inverse $G:\tilde W_n\SC(\C)\rightarrow \SC(f_n\C)$ to this functor we take a diagram in $\tilde W_n\SC(\C)$ and turn it into a coproduct of pure covering sub-maps in $\Tw(\C_p)$.  It will turn out that each of these diagrams represents an object of $\SC(f_n\C)$, which will give us the desired functor.  Given an object $A\in \tilde W_n\SC(\C)$ represented by
\begin{diagram}
A_1 & \rAcycCofib & A_2 & \rAcycCofib & \cdots & \rAcycCofib & A_n
\end{diagram}
we know that we can write every acyclic cofibration in this diagram as a pure covering sub-map.  When a morphism can be represented in this way the representation is unique, so we can in fact consider this object to be a diagram
\begin{diagram}
A_1 & \lCover & A_2 & \lCover & \cdots & \lCover & A_n
\end{diagram}
in $\Tw(\C_p)$.  This sits above an analogous diagram in $\FinSet$.  Given any such diagram in $\FinSet$ we can write it as a coproduct of fibers over the indexing set $I$ of $A_1$. Consequently we can write $A$ as
$$\coprod_{i\in I}\left(\bd A^i_1 & \lCover & A^i_2 & \lCover & \cdots & \lCover & A^i_n\ed\right).$$
We will show that each of these component diagrams actually represents an object of $f_n\C$.  Indeed, we know by definition that $A^i_1$ is a singleton set $\{a_i\}$.  Thus if we write $A^i_j$ as $\SCob{b}{k}$, from the fact that each of the morphisms in the diagram is a sub-map we know that for $K,k'\in K$, $b_k\times_{a_i}b_{k'} = \initial$, so each $A^i_j$ is an object of $\C^\id$.  Thus this diagram is an object of $f_n\C$ as desired.  This definition extends directly to the morphisms as well.

We need to prove that these functors are exact and inverses.  It is easy to see that they are inverses on objects, so we focus our attention on the morphisms in the categories.  To this end we define two projection functors $\pi_1:W_n\SC(\C)\rightarrow \SC(\C)$ and $P_1:f_n\C\rightarrow \C$ which will help us analyze the situation.

\begin{lemma} \lbl{lem:proj1}
  Let $P_1:f_n\C\rightarrow \C$ take a diagram
  $\bd A_1 & \lCover & \cdots & \lCover & A_n\ed$ to the unique element of
  $A_1$.  Then the functor $\SC(P_1)$ is faithful.
\end{lemma}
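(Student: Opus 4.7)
My plan is to represent morphisms in $\SC(f_n\C)$ by spans and show that an equivalence between their level-$1$ projections lifts uniquely to an equivalence in $f_n\C$.

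For any polytope complex $\D$, a morphism $\varphi:X\to Y$ in $\SC(\D)$ is represented by a span $\bd X & \lCover & Z & \rTo & Y\ed$ consisting of a pure covering sub-map and a horizontal morphism, modulo equivalence by horizontal isomorphisms of the apex. Applying $\SC(P_1)$ levelwise returns the span $\bd P_1(X) & \lCover & P_1(Z) & \rTo & P_1(Y)\ed$. Given two morphisms $\varphi,\psi:X\to Y$ in $\SC(f_n\C)$ with $\SC(P_1)(\varphi)=\SC(P_1)(\psi)$, I would choose representing spans $(Z,s,h)$ and $(Z',s',h')$; the hypothesis then furnishes a horizontal isomorphism $\sigma_1:P_1(Z)\to P_1(Z')$ intertwining the two projected spans.

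The core step is to lift $\sigma_1$ to a horizontal isomorphism $\sigma:Z\to Z'$ in $f_n\C$ intertwining $(s,h)$ and $(s',h')$; this will show that the two span representatives are equivalent already in $\SC(f_n\C)$, so $\varphi=\psi$. I would construct $\sigma$ inductively on the level $k$. At level $k>1$, the component $Z_k$ sits beneath $Z_{k-1}$ via the structural covering sub-map $\bd Z_{k-1} & \lCover & Z_k\ed$, and the leg $s_k$ is a pure covering sub-map $\bd Z_k & \rSub & X_k\ed$. Because covering sub-maps in $\C^\id$ are defined pointwise and the topology on $f_n\C$ is inherited levelwise, once $\sigma_{k-1}$ is fixed, the indexing set of $Z_k$ is determined as the pullback of the indexing sets of $Z_{k-1}$ and $X_k$ over that of $X_{k-1}$; the horizontal isomorphism on each component is then forced by compatibility with $\sigma_{k-1}$ and with the horizontal leg $h_k$ at the target.

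The main obstacle is making this inductive step rigorous: one must verify both the existence of the lifted isomorphism $\sigma_k$, using that $Z_k$ and $Z'_k$ arise as the same pointwise refinement, and its uniqueness, following from compatibility with the two spans and the structural sub-maps. Once the induction is complete, $\sigma$ witnesses the equivalence of the two span representatives, yielding $\varphi=\psi$ and hence the faithfulness of $\SC(P_1)$.
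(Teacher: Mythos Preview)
Your overall strategy---determine the span apex level by level, starting from level~1---is exactly the paper's idea, but the execution has a real gap in the inductive step.

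You claim that the indexing set of $Z_k$ is the pullback of the indexing sets of $Z_{k-1}$ and $X_k$ over $X_{k-1}$, using the left (sub-map) leg $s$. This is not true: the left leg is only a sub-map, not a covering sub-map, so nothing forces $Z_k$ to be as coarse as that fibre product. Concretely, one can refine $Z_k$ further (splitting components that sit over the same element of $X_k$) while keeping a covering sub-map to $Z_{k-1}$ and a sub-map to $X_k$; the pullback over $X$ does not see this. So from the source side alone, $Z_k$ is not pinned down.

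The paper fixes this by working on the \emph{target} side. The right leg is a shuffle, i.e.\ levelwise a horizontal isomorphism in $\C^\id$; in particular it is a bijection on inner indexing sets. Commutativity of the right-hand square then forces the vertical map $A_2'\to A_1'$ to be a covering sub-map, and hence $A_2'=\sigma^*B_2$, the pullback of the covering sub-map $B_2\to B_1$ along the horizontal $\sigma$. Only after $A_2'$ is fixed does the paper turn to the source side, where the remaining datum---a sub-map $\sigma^*B_2\to A_2$ making the left square commute---is unique because sub-maps under a fixed object form a preorder. This is cleaner than your two-span formulation: rather than producing an isomorphism $\sigma_1$ between two apexes and lifting it (which also requires you to justify why equality of $\SC(P_1)$-images gives such a $\sigma_1$, a point depending on the precise equivalence relation on span representatives), the paper simply shows that there is \emph{at most one} completion of the square at each successive level.

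A smaller point: the left leg of the span representing a morphism in $\SC(\D)$ is a sub-map, not a ``pure covering sub-map''; if it were always covering, every morphism would be a weak equivalence.
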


\begin{proof}
  It suffices to show that given any diagram
  \begin{diagram}[small]
    A_1 & \rAcycCofib & A_2 \\
    \dTo^f && \\
    B_1 & \rAcycCofib & B_2
  \end{diagram}
  there exists at most one morphism $g:\bd A_2 & \rTo & B_2\ed$ that makes the
  diagram commute.  In particular, if we consider the diagram in
  $\Tw((f_n\C)_p)$ representing such a commutative square, we have
  \begin{diagram}[small]
    A_2 & \lSub & A_2' & \rTo & B_2 \\
    \dCover & & \dCover && \dCover \\
    A_1 & \lSub & A_1' & \rTo^\sigma & B_1
  \end{diagram}
  where the morphism $\bd A_2' & \rCover & A_1'\ed$ is a covering sub-map
  because the square commutes.  In particular, this means that $A_2' =
  \sigma^*B_2$.  Thus we can complete the square exactly when we have a sub-map
  $\bd \sigma^*B_2 & \rSub & A_2 \ed$ which makes the left-hand square commute,
  of which there is at most one.
\end{proof}

And, completely analogously, we can prove a symmetric statement about $\pi_1$.

\begin{lemma}
Let $\pi_1:\tilde W_n\SC(\C)\rightarrow \SC(\C)$ be the exact functor which takes an object $\bd A_1 & \rAcycCofib & \cdots & \rAcycCofib & A_n\ed$ to $A_1$.  Then $\pi_1$ is faithful.
\end{lemma}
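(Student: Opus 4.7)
The argument is intended to mirror that of Lemma \ref{lem:proj1}, now carried out inside $\SC(\C)$ rather than inside $f_n\C$. I would proceed by induction on the level index: once we know $\pi_1(f)=\pi_1(g)$ we have $f_1=g_1$, and the inductive step reduces to the single-square claim that given a commutative square
\begin{diagram}[small]
A_k & \rAcycCofib & A_{k+1} \\
\dTo^{f_k} && \\
B_k & \rAcycCofib & B_{k+1}
\end{diagram}
in $\SC(\C)$ whose horizontals are the pure covering sub-maps supplied by the definition of $\tilde W_n\SC(\C)$, there is at most one morphism $\bd A_{k+1} & \rTo & B_{k+1}\ed$ completing the square.

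To prove this single-step uniqueness, I would represent $f_k$ by a span $\bd A_k & \lSub & A_k' & \rTo^\sigma & B_k\ed$ in $\Tw(\C_p)$ and then lift the whole square to $\Tw(\C_p)$ exactly as in Lemma \ref{lem:proj1}. Commutativity forces $A_{k+1}' = \sigma^*B_{k+1}$, so the left leg of the span representing the completion is pinned down uniquely; the only remaining freedom is in a sub-map $\bd \sigma^*B_{k+1} & \rSub & A_{k+1}\ed$ over the given $\bd A_k' & \rSub & A_k\ed$, and the argument of Lemma \ref{lem:proj1} shows there is at most one such sub-map.

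The one technical obstacle is setting up the lift of the square in $\SC(\C)$ to a single coherent diagram in $\Tw(\C_p)$: this requires the uniqueness of pure sub-map representatives for acyclic cofibrations, the same fact that was invoked when defining $G:\tilde W_n\SC(\C)\rightarrow \SC(f_n\C)$. Once that lift is in hand, the remainder is essentially the pullback argument from the previous lemma, which is why the author is content to state the result as a symmetric analogue without spelling it out separately.
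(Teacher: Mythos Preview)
Your proposal is correct and is precisely the ``completely analogous'' argument the paper has in mind: reduce faithfulness to the single-square uniqueness statement, lift the square to a diagram of spans in $\Tw(\C_p)$ using the unique pure covering sub-map representatives of the horizontal acyclic cofibrations, and observe that commutativity forces the middle object to be $\sigma^*B_{k+1}$, leaving only a sub-map into $A_{k+1}$ over the given one into $A_k$, of which there is at most one. The only thing you add beyond Lemma~\ref{lem:proj1} is making the induction on $k$ explicit, which is harmless.
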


We can now prove the main result of this section.



\begin{proposition} \lbl{prop:Wneqfn}
$W_n\SC(\C)$ is exactly equivalent to $\SC(f_n\C)$.
\end{proposition}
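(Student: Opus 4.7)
The plan is to reduce, via the exact equivalence $\tilde W_n\SC(\C) \simeq W_n\SC(\C)$ furnished by lemma \ref{lem:subobjects}, to showing that the already-constructed functors $H:\SC(f_n\C)\rightarrow \tilde W_n\SC(\C)$ and $G:\tilde W_n\SC(\C) \rightarrow \SC(f_n\C)$ are inverse exact equivalences. The preceding discussion already verifies that $G$ is well-defined on morphisms (using the uniqueness of the pure covering sub-map representation of an acyclic cofibration) and that $H$ and $G$ are mutually inverse bijections on objects; what remains is to check bijectivity on morphisms and exactness.

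For the morphism bijection I would exploit the faithfulness of the two ``first level'' projections just proved. Both functors commute with these projections in the sense that $\pi_1 \circ H = \SC(P_1)$ and $\SC(P_1) \circ G = \pi_1$, since $H$ and $G$ act as the identity on the (singleton) first level of every object. Then for any morphism $f$ in $\SC(f_n\C)$, the morphism $GH(f)$ has the same source and target as $f$ and satisfies $\SC(P_1)(GH(f)) = \pi_1(H(f)) = \SC(P_1)(f)$; faithfulness of $\SC(P_1)$ (lemma \ref{lem:proj1}) then forces $GH(f) = f$. The symmetric argument using faithfulness of $\pi_1$ gives $HG = 1$, so $H$ and $G$ are mutually inverse equivalences of underlying categories.

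For exactness, weak equivalences and cofibrations in both $\SC(f_n\C)$ and $\tilde W_n\SC(\C)$ are ultimately specified by levelwise conditions in $\Tw(\C_p)$: weak equivalences are levelwise pure covering sub-maps, and cofibrations are levelwise sub-maps satisfying an appropriate pointwise compatibility condition. The same applies to pushouts along cofibrations, which are also computed levelwise. Chasing the definitions through the coproduct/fiber-decomposition correspondence then shows that both $H$ and $G$ preserve all three structures.

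The main obstacle I expect is this last step: the Waldhausen structure on $\SC(f_n\C)$ is defined from the topology on $f_n\C$, while the structure on $\tilde W_n\SC(\C)$ is inherited from $F_n\SC(\C)$ through an iterated-pushout condition, so lining the two up takes a patient unraveling. Once translated into $\Tw(\C_p)$, however, the match reduces to the elementary observation that levelwise disjoint union over $I$ and fiber decomposition over the indexing set of $A_1$ are mutually inverse operations, so no genuinely new content is required beyond the setup already in place.
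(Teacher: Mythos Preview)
Your reduction to $\tilde W_n\SC(\C)$ and your argument that $G$ and $H$ are mutually inverse on objects and on hom-sets match the paper's proof exactly, including the use of the relations $\pi_1 H = \SC(P_1)$ and $\SC(P_1) G = \pi_1$ together with faithfulness of $\pi_1$ and $\SC(P_1)$.

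Where you diverge from the paper is in the exactness step. You propose to verify directly that the levelwise descriptions of cofibrations and weak equivalences in $\SC(f_n\C)$ and in $\tilde W_n\SC(\C)$ line up under the coproduct/fiber-decomposition dictionary. That can be made to work, but note that the Waldhausen structure on $\tilde W_n\SC(\C)$ is inherited from $F_n\SC(\C)$, where cofibrations are defined by an iterated-pushout condition rather than a purely levelwise one; so the ``patient unraveling'' you anticipate really does need to establish that, for diagrams of acyclic cofibrations, this condition collapses to something levelwise. The paper sidesteps this entirely: since $\pi_1$ and $\SC(P_1)$ are already known to be exact, it suffices to show that they \emph{reflect} cofibrations and weak equivalences. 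Then, for instance, if $f$ is a cofibration in $\SC(f_n\C)$, $\pi_1(H(f)) = \SC(P_1)(f)$ is a cofibration, and reflection by $\pi_1$ forces $H(f)$ to be one as well. The reflection property is checked by a single short diagram chase in $\Tw(\C_p)$ (pullback of a covering sub-map is covering; pullback preserves injectivity of set-maps), which is considerably lighter than unwinding both Waldhausen structures in parallel.
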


\begin{proof}
We will show that $G$ and $H$ induce isomorphisms between $\tilde W_n\SC(\C)$ and $\SC(f_n\C)$, which will show the result as $\tilde W_n\SC(\C)$ is exactly equivalent to $W_n\SC(\C)$.

It is clear that $GH$ and $HG$ are the identity on objects, so it remains to show that they are inverses on morphisms.  From the definitions it is easy to see that $\SC(P_1)G = \pi_1$ and that $\pi_1H = \SC(P_1)$, so that
$$\SC(P_1)GH = \pi_1H = \SC(P_1) \qquad \hbox{and} \qquad \pi_1HG = \SC(P_1)G = \pi_1.$$
As $\SC(P_1)$ and $\pi_1$ are both faithful, if we consider these on hom-sets we see that $G$ and $H$ are mutual inverses on any hom-set.  Thus $\tilde W_n\SC(\C)$ is isomorphic to $\SC(f_n\C)$.

It remains to show that $G$ and $H$ are exact functors.  We already know that they preserve pushouts, so all it remains to show is that they preserve cofibrations and weak equivalences.  Note that we know by definition that $\pi_1$ and $\SC(P_1)$ are exact functors; thus in order to show that $G$ and $H$ are exact it suffices to show that $\pi_1$ and $\SC(P_1)$ reflect cofibrations and weak equivalences.

For both of these cases it suffices to show that in $\Tw(\C_p)$ if
\begin{diagram}[small]
A_1 & \lCover^p & A_1' & \rTo^\sigma & B_1 \\
\uCover^i && && \uCover_j \\
A_2 & \lSub^q & \sigma^*B_2 & \rTo^{\tilde \sigma} & B_2
\end{diagram}
commutes and $\sigma$ has an injective set-map, then $q$ is a covering sub-map and $\tilde \sigma$ has an injective set-map.  The first of these is true because $q$ is the pullback along $i$ of $jp$, which is a covering sub-map; the second of these is true because pullbacks preserve injectivity of set-maps.  So we are done.
\end{proof}

\begin{remark}
If we define $P_n$ and $\pi_n$ analogously to $P_1$ and $\pi_1$ we see that $\SC(P_n)$ and $\pi_n$ are exact equivalences of categories.  Thus $\SC(f_n\C)$ and $W_n\SC(\C)$ are exactly equivalent, as they are both equivalent to $\SC(\C)$.  We do not use these functors because they are not compatible with the simplicial maps of $S_n\SC(\C)$, and thus will not give inverse equivalences on the $K$-theory.
\end{remark}

\section{Combing} \lbl{sec:combing}

Let $f:\bd A&\rCofib & B\ed\in \SC(\C)$ be a cofibration.  We define the
\textsl{image} of $f$ to be the cofiber of the canonical cofibration $\bd B/A &
\rCofib & B \ed$ (see \cite{zakharevich10}, corollary 6.8).  We will write the image
of $f$ as $\im(f)$; when the cofibration is clear from context we will often
write is as $\im_B(A)$.  Note that we have an acyclic cofibration
\begin{diagram}
A & \rAcycCofib & \im_B(A)
\end{diagram}
More concretely, if we write $A = \SCob{a}{i}$ and $B = \SCob{b}{j}$, and if $f$ can be represented by covering sub-map $p$ and the shuffle $\sigma$, $\im_B(A) = \{b_j\}_{j\in \im\,\sigma}$.

Now suppose that we are given an object $A=(\bd A_1 & \rCofib & A_2 & \rCofib & \cdots & \rCofib & A_n\ed) \in F_n\SC(\C)$.  Then we define the $i$-th \textsl{strand} of $A$, $\St_i(A)$ to be the diagram
\begin{diagram}
A_i/A_{i-1} & \rAcycCofib & \im_{A_{i+1}}(A_i/A_{i-1}) & \rAcycCofib & \cdots & \rAcycCofib & \im_{A_n}(A_i/A_{i-1}).
\end{diagram}
We can consider $\St_i(A)$ to be an object of $F_n\SC(\C)$ by padding the front with sufficiently many copies of the zero object; then we can canonically write $A = \coprod_{i=1}^n \St_i(A)$.

\begin{definition}
We will say that a morphism $f:A\rightarrow B\in F_n\SC(\C)$ is \textsl{layered} if for all $1\leq i < k \leq n$ the diagram
\begin{diagram}[small]
A_k/A_i & \rCofib & A_k \\
\dTo^{f_k/f_i} & & \dTo_{f_k} \\
B_k/B_i & \rCofib & B_k
\end{diagram}
commutes.  We define $L_n\SC(\C)$ to be the subcategory of $F_n\SC(\C)$ containing all layered morphisms.
\end{definition}

Not all morphisms are layered.  For example, let $X$ be a nonzero object, and let $g:\bd X & \rCofib & Y\ed$ be any cofibration in $\SC(\C)$.  Then $\bd\emptyset & \rCofib & Y\ed$ and $\bd X & \rCofib & Y\ed$ are both objects of $F_2\SC(\C)$ and we have a non-layered morphism
\begin{diagram}[small]
\emptyset & \rCofib & Y \\
\dCofib & & \dEqual \\
X & \rCofib & Y
\end{diagram}
between them.  As all cofibers of acyclic cofibrations are trivial, all morphisms of $W_n\SC(\C)$ are layered.  In fact, if we let $I_{ni}:F_{n-i+1}\SC(\C) \rightarrow F_n\SC(\C)$ be the functor which pads a diagram with $i$ copies of $\emptyset$ at the beginning, then the restriction of $I_{ni}$ to $L_{n-i+1}\SC(\C)$ has its image in $L_n\SC(\C)$. 

\begin{lemma} \lbl{lem:layering} $\hbox{ }$
\begin{enumerate}
\item $f$ is layered if and only if for all $1\leq i < n$, the morphism
$$f_{i,i+1}:(\bd A_i & \rCofib & A_{i+1}\ed) \longrightarrow (\bd B_i & \rCofib & B_{i+1}\ed)\in F_2\SC(\C)$$
is layered.
\item  Given any commutative square $(\bd A_1 & \rCofib & A_2\ed)\rightarrow (\bd B_1&\rCofib& B_2\ed)$ we have an induced commutative square $(\bd \im_{A_2}(A_1) &\rCofib & A_2\ed) \rightarrow (\bd \im_{B_2}(B_1) & \rCofib & B_2\ed)$.  Thus if $(\bd A_1 & \rCofib & A_2 \ed) \longrightarrow (\bd B_1 & \rCofib & B_2\ed)$ is layered then so is $(\bd A_2/A_1 & \rCofib & A_2 \ed) \longrightarrow (\bd B_2/B_1 & \rCofib & B_2\ed)$.
\end{enumerate}
\end{lemma}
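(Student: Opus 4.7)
The ``only if'' direction is immediate by setting $k=i+1$ in the definition of layered. For ``if'', I would induct on the gap $k-i\geq 1$, with base case given by the hypothesis. For the inductive step I use the factorization $\bd A_i & \rCofib & A_{k-1} & \rCofib & A_k\ed$ and its induced cofiber sequence $\bd A_{k-1}/A_i & \rCofib & A_k/A_i & \rCofib & A_k/A_{k-1}\ed$. Working in the concrete shuffle description, an index in $A_k/A_i$ (an index of $A_k$ outside the image of $\bd A_i & \rCofib & A_k\ed$) either lies outside the image of $\bd A_{k-1} & \rCofib & A_k\ed$, in which case the hypothesis layered at $(k-1,k)$ directly forces $f_k$ to send it outside the image of $\bd B_{k-1} & \rCofib & B_k\ed$ and hence outside the image of $\bd B_i & \rCofib & B_k\ed$, or it is the image of an index in $A_{k-1}/A_i$, in which case the inductive hypothesis at $(i,k-1)$ combined with the commutativity of $f$ with respect to the cofibration $\bd A_{k-1} & \rCofib & A_k\ed$ yields the required conclusion.

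\textbf{Plan for part (2).} For the first claim, I would construct the map $\im_{A_2}(A_1)\rightarrow\im_{B_2}(B_1)$ as the canonical restriction of $f_2:A_2\rightarrow B_2$ to the subobject $\bd\im_{A_2}(A_1) & \rCofib & A_2\ed$. In the concrete description, $\im_{A_2}(A_1)$ corresponds to the image of the shuffle $\sigma_A$ representing $\bd A_1 & \rCofib & A_2\ed$; the commutativity of the original square forces the shuffle representing $f_2$ to send $\im\sigma_A$ into $\im\sigma_B$, which provides the restriction, and commutativity of the induced square is built into the construction. The second claim is then a direct unwinding: being layered for $(\bd A_2/A_1 & \rCofib & A_2\ed)\rightarrow(\bd B_2/B_1 & \rCofib & B_2\ed)$ is, by the definition of the image as the cofiber of $\bd B_2/B_1 & \rCofib & B_2\ed$, exactly the commutativity of $(\bd \im_{A_2}(A_1) & \rCofib & A_2\ed)\rightarrow(\bd \im_{B_2}(B_1) & \rCofib & B_2\ed)$, which holds by the first claim applied to our layered (hence commutative) square.

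\textbf{Main obstacle.} The principal technical subtlety I expect is in part (1): carefully matching the strand decompositions of $A_k/A_i$ and $B_k/B_i$ and verifying that the two cases in the induction genuinely exhaust the indices of $A_k$ outside the image of $\bd A_i & \rCofib & A_k\ed$, with the injectivity of the shuffles used to ensure that commutativity on each strand glues to commutativity of the full layered square. Part (2) is essentially an unwinding of the double-cofiber definition of $\im$ once the first claim is in hand.
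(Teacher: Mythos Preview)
Your proposal is correct, and for part (2) it is essentially the paper's argument: the paper also builds the induced map on images via an explicit representation (in $\Tw(\C_p)$ rather than purely at the index level) and then observes that the cofiber of $\bd A_2/A_1 & \rCofib & A_2\ed$ is $\im_{A_2}(A_1)$, which is exactly your ``double-cofiber'' observation.

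For part (1) the two arguments genuinely diverge. You run the induction at the level of indices and shuffles, splitting the indices of $A_k/A_i$ according to whether they lie in the image of $\bd A_{k-1}&\rCofib&A_k\ed$, and you implicitly use the characterization ``layered $\Leftrightarrow$ the shuffle of $f_k$ carries the complement of $\im A_i$ into the complement of $\im B_i$'' (the paper states and uses this equivalence only later, in the appendix). The paper instead argues categorically: it writes $A_{k+1}/A_i$ as the pushout of $\bd A_{k+1}&\lCofib&A_k&\rTo&A_k/A_i\ed$ and checks that the two candidate composites $A_{k+1}/A_i\rightarrow B_{k+1}$ agree after precomposition with both pushout legs, using along the way that weak equivalences in $\SC(\C)$ are epimorphisms and that sections of cofiber maps compose well. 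Your approach is more elementary and closer to the combinatorics of $\SC(\C)$; the paper's approach avoids index bookkeeping at the cost of a somewhat intricate cube chase. Both are valid; just be aware that your ``image'' reformulation of the layering condition needs a one-line justification (that cofibrations in $\SC(\C)$ are monic, so landing in $B_k/B_i$ forces the factorization to be $f_k/f_i$).
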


\begin{proof} $\hbox{ }$
\begin{enumerate}
\item The forwards direction is trivial, so it suffices to prove the backwards direction.  We will prove it by induction on $k$.  For $k=i+1$ this is given.  Now suppose that it is true up to $k$.  Then we have the following diagram
\begin{diagram}[small]
&&A_k &&\lCofib && A_k/A_i \\
&&\dLine & \rdCofib &&& \vLine & \rdCofib \\
A_{k+1}/A_k &  \rCofib   &\HonV&& A_{k+1} && \HonV & \lCofib & A_{k+1} / A_i \\
&&\dTo&&\dTo && \dTo && \dTo \\
\dTo &&B_k & \lTo & \VonH & \lHookline & B_k/B_i \\
&&& \rdCofib & & & & \rdCofib \\
B_{k+1}/B_k&&\rCofib&& B_{k+1} & & \lCofib && B_{k+1}/B_i
\end{diagram}
in which we know that every face other than the front one commutes; we want to show that the front face also commutes.  Let 
\[\alpha:\bd A_{k+1}/A_i & \rCofib & A_{k+1} & \rTo & B_{k+1}\ed \qquad \beta:\bd A_{k+1}/A_i & \rTo & B_{k+1}/B_i & \rCofib & B_{k+1}\ed;\]
we want to show that $\alpha = \beta$.  As $A_{k+1}/A_i$ is the pushout of the diagram
\begin{diagram}
A_{k+1} & \lCofib & A_k & \rTo & A_k/A_i,
\end{diagram}
it suffices to show that $f\alpha=f\beta$ and $g\alpha = g\beta$ for $f:\bd A_k/A_i & \rCofib & A_{k+1}/A_o\ed$ and $g:\bd A_{k+1} & \rTo & A_{k+1}/A_i\ed$.  The first of these follows directly from the fact that all faces of the cube but the front one commute.  For the second of these, note that we have a weak equivalence $\bd A_k \amalg A_{k+1}/A_k & \rWeakEquiv A_{k+1}\ed$ and weak equivalences are epimorphisms, so in fact it suffices to show that $g_1\alpha = g_1\beta$ and $g_2\alpha = g_2\beta$ for
\[g_1 = \bd A_k & \rCofib & A_{k+1} & \rTo & A_{k+1}/A_i\ed \qquad \hbox{and} \qquad g_2 = \bd A_{k+1}/A_k & \rCofib & A_{k+1} & \rTo & A_{k+1}/A_i\ed.\]
The first of these follows from a simple diagram chase, keeping in mind that all horizontal cofibrations in this cube are actually sections of cofiber maps.  The second of these also turns into a simple diagram chase after noting that for any sequence of cofibrations $\bd X & \rCofib & Y & \rCofib & Z\ed$ in $\SC(\C)$ we have
$$\bd Z/Y & \rCofib & Z & \rTo & Z/X & \rCofib Z \ed = \bd Z/Y & \rCofib & Z\ed.$$

\item  Note that if we have a commutative square $(\bd A_1 & \rCofib & A_2\ed)\rightarrow (\bd B_1 &\rCofib & B_2\ed)$ it can be represented by the following commutative diagram in $\Tw(\C_p)$:
\begin{diagram}[small]
A_1 & \lCover & A_1' & \rTo & A_2 \\
\uSub && \uSub &\star& \uSub \\
\tilde A_1 & \lCover & X & \rTo & \tilde A_2 \\
\dTo &\star & \dTo & & \dTo \\
B_1 & \lCover & B_1' & \rTo & B_2
\end{diagram}
where the starred squares are pullbacks.  We know $A_1' \cong \im_{A_2}(A_1)$ and $B_1' \cong \im_{B_2}(B_1)$ and the middle column in the diagram represents a morphism between them.  In fact, the right-hand half of this diagram is --- up to isomorphism --- exactly the square that the lemma states exists.  The second part of the statement follows because the cofiber of $\bd A_2/A_1 & \rCofib & A_2\ed$ is exactly $\im_{A_2}(A_1)$.
\end{enumerate}
\end{proof}

\begin{lemma} \lbl{lem:Lnstructure}
$L_n\SC(\C)$ is a Waldhausen category which is a simplification of $F_n\SC(\C)$.  The cofibrations (resp. weak equivalences) in $L_n\SC(\C)$ are exactly the morphisms which are levelwise cofibrations (resp. weak equivalences).
\end{lemma}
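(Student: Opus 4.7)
The plan is to check, in the natural order: (a) $L_n\SC(\C)$ is a wide subcategory of $F_n\SC(\C)$; (b) every weak equivalence and every cofibration of $F_n\SC(\C)$ is automatically layered, so the inclusion $L_n\SC(\C)\hookrightarrow F_n\SC(\C)$ is a simplification; (c) the cofibrations and weak equivalences of $L_n\SC(\C)$ admit the asserted levelwise description; and (d) the Waldhausen axioms for $L_n\SC(\C)$, inherited from those of $F_n\SC(\C)$ once (a)--(c) are established. Throughout, Lemma \ref{lem:layering}(1) reduces the layered condition to adjacent index pairs $(i,i+1)$, and I would exploit the canonical splitting $B\cong \im_B(A)\vee B/A$ available in $\SC(\C)$: the image construction of Section 5 decomposes every object above a cofibration into an image-part and a cofiber-part, and the layered condition asks precisely that this decomposition be natural in $f$.

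For (a), the identity is trivially layered, and for composable layered $f$ and $g$ the cofiber-section square for $gf$ is the vertical composite of the squares for $f$ and $g$, hence commutes. For (b) at the pair $(i,i+1)$, the image-half of the naturality statement is provided for free by Lemma \ref{lem:layering}(2). For a weak equivalence in $F_n\SC(\C)$, each $f_j$ is a weak equivalence, so the remaining cofiber-half is controlled by $f_i$ and $f_{i+1}$ via the universal property of the cofiber together with the image-half. For a cofibration in $F_n\SC(\C)$, the pushout condition $B_i\cup_{A_i}A_{i+1}\hookrightarrow B_{i+1}$ is represented in $\SC(\C)$ by an injective map on the underlying indexing sets of polytopes, which forces the image of the $A_{i+1}/A_i$ summand to be disjoint from $B_i$ and hence to land inside $B_{i+1}/B_i$ --- this is exactly the layered condition.

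For (c), the easy direction is that cofibrations in $F_n\SC(\C)$ are levelwise cofibrations (specialize the pushout condition) and weak equivalences are levelwise by definition. Conversely, if $f$ is layered and every $f_j$ is a cofibration in $\SC(\C)$, then under the canonical splitting $f_{i+1}$ becomes $f_i\vee (f_{i+1}/f_i)$, so that $B_i\cup_{A_i}A_{i+1}\cong B_i\vee A_{i+1}/A_i \longrightarrow B_i \vee B_{i+1}/B_i \cong B_{i+1}$ is the cofibration $\id \vee (f_{i+1}/f_i)$. The weak-equivalence case is strictly easier. Step (d) is then formal: the Waldhausen axioms for $L_n\SC(\C)$ follow from those of $F_n\SC(\C)$ after one verifies that pushouts along layered cofibrations remain layered, which is again checked on adjacent index pairs using Lemma \ref{lem:layering}(1,2). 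The main obstacle I expect is step (b) for cofibrations: identifying the $F_n$-pushout condition with injectivity on the indexing-set level of $\SC(\C)$ requires unpacking the description of cofibrations from \cite{zakharevich10}, and is where the combinatorial structure of polytope complexes --- rather than formal Waldhausen-category manipulations --- is doing the real work.
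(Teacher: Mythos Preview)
Your outline matches the paper's structure closely: the paper also reduces to adjacent pairs via Lemma~\ref{lem:layering}(1), proves that cofibrations are layered by reading the $F_n$ pushout condition as injectivity on index sets (your step (b) for cofibrations), proves the levelwise-cofibration direction via the splitting $A_2\cup_{A_1}B_1\cong (A_2/A_1)\amalg B_1$ (your step (c)), and checks pushout closure separately.

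There is one genuine soft spot. In step (b) for weak equivalences you say the ``cofiber-half is controlled by $f_i$ and $f_{i+1}$ via the universal property of the cofiber together with the image-half.'' This does not go through as stated: the universal property of $A_{i+1}/A_i$ governs maps \emph{out} of the cofiber, whereas the layering condition compares two specified maps \emph{into} $B_{i+1}$, and the image-half (Lemma~\ref{lem:layering}(2)) says nothing about where the complement $A_{i+1}/A_i$ lands. The paper's argument is different and relies on a feature peculiar to $\SC(\C)$: every weak equivalence in $\SC(\C)$ is already a cofibration (the shuffle is a bijection, hence injective). Given a levelwise weak equivalence, the map $A_2\cup_{A_1}B_1\to B_2$ is then a weak equivalence by 2-of-3, hence a cofibration, so the morphism is a cofibration in $F_n\SC(\C)$ and one invokes the already-proven cofibration case. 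You should replace your universal-property sketch with this reduction.

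A smaller point: your step (d) is where most of the actual work lives. The paper sets up a small calculus of subobjects and images ($\im_B A$, intersections, unions) and uses it to verify that all three structure maps of a pushout square of layered morphisms are layered; Lemma~\ref{lem:layering}(2) alone does not suffice. Also, in step (c) you assert that $\id\vee(f_{i+1}/f_i)$ is a cofibration without saying why $f_{i+1}/f_i$ is one; the paper handles this via the cancellation property in $\SC(\C)$ that if $h$ and $hg$ are cofibrations then so is $g$.
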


We postpone the proof of this lemma until appendix \ref{app:technical} as it is technical and not particularly illuminating.

\begin{lemma} \lbl{lem:Sti}
$\St_i$ is an exact functor $L_n\SC(\C)\rightarrow W_{n-i+1}\SC(\C)$.  We have a natural transformation $\eta_i:I_{ni}\St_i \rightarrow \mathrm{id}$ given by the natural inclusions $\bd \im_{A_k}(A_i/A_{i-1}) & \rCofib & A_k\ed$.    On $W_{n-i+1}\SC(\C)$,
$$\St_iI_{ni} = \mathrm{id} \qquad \hbox{and} \qquad \St_j I_{ni} \St_i = 0$$
for $i\neq j$.  
\end{lemma}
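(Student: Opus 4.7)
The plan is to verify the four assertions in order, beginning with the extension of $\St_i$ from objects to morphisms. Given a layered morphism $f:A\to B$ in $L_n\SC(\C)$, the layered condition at indices $i-1,i$ supplies a commutative square of cofibrations, which induces a morphism $f_i/f_{i-1}:A_i/A_{i-1}\to B_i/B_{i-1}$ of cofibers. For each $k>i$, the layered condition provides a commutative square from the cofibration $A_i/A_{i-1}\to A_k$ to the cofibration $B_i/B_{i-1}\to B_k$; applying lemma \ref{lem:layering}(2) to this square yields a morphism of image pairs, and in particular a map $\im_{A_k}(A_i/A_{i-1})\to\im_{B_k}(B_i/B_{i-1})$. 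Compatibility of these maps with the horizontal inclusions in $\St_i(A)$ and functoriality under composition follow from uniqueness in the universal property of cofibers applied levelwise.

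To confirm that $\St_i(A)$ lies in $W_{n-i+1}\SC(\C)$, recall that by construction there is an acyclic cofibration $A_i/A_{i-1}\to\im_{A_k}(A_i/A_{i-1})$ for every $k\geq i$; two-out-of-three applied to the commutative triangle at $A_i/A_{i-1}$ shows that the inclusion between successive image terms is also an acyclic cofibration. Exactness of $\St_i$ then follows from lemma \ref{lem:Lnstructure}: a layered morphism is a cofibration (resp.\ weak equivalence) exactly when it is one at every level, and both the quotient operation by $A_{i-1}$ and the image operation $\im_{A_k}(-)$ are built from pushouts and cofibers, so they preserve cofibrations, weak equivalences, and pushouts.

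The natural transformation $\eta_i:I_{ni}\St_i\to\mathrm{id}$ is defined at position $k\geq i$ by the canonical cofibration $\im_{A_k}(A_i/A_{i-1})\to A_k$, and at position $k<i$ by the unique map out of the zero object. The squares needed for naturality of $\eta_i$ under a layered morphism are exactly those supplied by lemma \ref{lem:layering}(2).

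Finally, the two identities on $W_{n-i+1}\SC(\C)$ are direct computations. If $A\in W_{n-i+1}\SC(\C)$, then $I_{ni}(A)$ has $A_{k-i+1}$ in position $k\geq i$ and the zero object elsewhere, so the $k$-th term of $\St_i(I_{ni}A)$ is $\im_{A_{k-i+1}}(A_1)$; since $A_1\to A_{k-i+1}$ is acyclic, the quotient $A_{k-i+1}/A_1$ is trivial and the image equals $A_{k-i+1}$, yielding $\St_iI_{ni}=\mathrm{id}$. For the orthogonality relation with $j\neq i$: if $j<i$, then position $j$ of $I_{ni}\St_i(A)$ is the zero object and every image built from it is zero; if $j>i$, then positions $j-1$ and $j$ of $I_{ni}\St_i(A)$ are both images of $A_i/A_{i-1}$ connected by an acyclic cofibration, so the cofiber used to form $\St_j$ in position $j$ vanishes and all subsequent images likewise. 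The main obstacle is bookkeeping: ensuring coherence of the image and quotient constructions entering the definition of $\St_i$ on morphisms, which in every instance reduces to a careful application of lemma \ref{lem:layering}(2).
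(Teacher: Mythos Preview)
Your argument is correct and relies on the same ingredients as the paper: lemma \ref{lem:layering}(2) to produce the induced maps on images, lemma \ref{lem:Lnstructure} for the levelwise description of cofibrations and weak equivalences in $L_n\SC(\C)$, and the observation that image and quotient are iterated cofibers. The final identities are handled by the same direct computations.

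The organizational difference worth noting is that the paper introduces an intermediate truncation functor $T_i:L_n\SC(\C)\to L_{n-i+1}\SC(\C)$, sending $A$ to the diagram $A_i/A_{i-1}\hookrightarrow A_{i+1}\hookrightarrow\cdots\hookrightarrow A_n$, checks (using lemma \ref{lem:layering}(1) and (2)) that $T_i$ lands in layered morphisms and is exact, and then factors $\St_i=\St_1\circ T_i$. This reduces everything to the single case $i=1$, where $\St_1$ is visibly ``two successive cofibers'' and exactness is immediate. The payoff is in the last identities: once one observes that $\St_k$ restricted to any $W_m\SC(\C)$ is the identity for $k=1$ and zero for $k>1$, the computations $\St_iI_{ni}=\St_1T_iI_{ni}=\St_1=\mathrm{id}$ and (for $j>i$) $\St_jI_{ni}\St_i=\St_{j-i+1}\St_i=0$ fall out without any index bookkeeping. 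Your direct approach trades this modularity for a slightly longer case analysis at the end, but nothing is lost.
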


\begin{proof}
Let $f:A\rightarrow B \in L_n\SC(\C)$.  We claim that the morphism 
\begin{diagram}[small]
A_i/A_{i-1} & \rCofib & A_{i+1} & \rCofib & \cdots & \rCofib & A_n\\
\dTo^{f_i/f_{i-1}} && \dTo_{f_{i+1}} && && \dTo_{f_n}\\
B_i/B_{i-1} & \rCofib & B_{i+1} & \rCofib & \cdots & \rCofib & B_n
\end{diagram}
in $F_{n-i+1}\SC(\C)$ is also layered.  By lemma \ref{lem:layering}(1) we know that it suffices to check that each square in this diagram satisfies the layering condition.  All squares but the first one satisfy it because $f$ is layered.  The first square can be factored as 
\begin{diagram}[small]
A_i/A_{i-1} & \rCofib & A_i & \rCofib & A_{i+1} \\
\dTo^{f_i/f_{i-1}} && \dTo_{f_i} && \dTo_{f_{i+1}} \\
B_i/B_{i-1} & \rCofib & B_i & \rCofib& B_{i+1}
\end{diagram}
The right-hand square satisfies the layering condition because $f$ is layered; the left-hand square satisfies it by lemma \ref{lem:layering}(2).  If we let $T_i:L_n\SC(\C)\rightarrow L_{n-i+1}\SC(\C)$ be the functor taking an object to this truncation then $T_i$ is exact, as by lemma \ref{lem:Lnstructure} layered cofibrations are exactly levelwise.  Note that $T_i I_{ni} = \mathrm{id}$ and we have a natural transformation $\eta':I_{ni} T_i \rightarrow \mathrm{id}$.

We can write $\St_i = \St_1T_i$; thus if we can prove the lemma for $i=1$ we will be done.  The fact that $f$ is layered implies that $\St_1$ is a functor $L_n\SC(\C) \rightarrow W_n\SC(\C)$ (as it is obtained by taking levelwise cofibers in a commutative diagram).  As colimits commute past one another, we see that this preserves pushouts along cofibrations.  Thus to see that $\St_1$ is exact it remains to show that it preserves cofibrations and weak equivalences, which is true because both weak equivalences and cofibrations are preserved by taking cofibers, and $\St_1$ simply takes two successive cofibers.

The natural transformation $\eta_1$ is obtained by factoring each cofibration $\bd A_1&\rCofib & A_k\ed$ through the weak equivalence $\bd A_1 & \rAcycCofib & \im_{A_k}(A_1)\ed$.  By the discussion in the proof of lemma \ref{lem:layering}(2) this will in fact be a natural transformation.

Now we show the last part of the lemma.  It is a simple computation to see that $\St_i|_{W_m\SC(\C)}$ is the identity if $i=1$, and $0$ otherwise.  Thus $\St_iI_{ni} = \St_1T_iI_{ni} = \St_1$ is the identity.  If $j<i$ then the $j$-th component of $I_{ni}\St_i$ is $\initial$, so $\St_j I_{ni}\St_i = 0$ trivially.  If $j>i$ then $\St_j I_{ni} \St_i = \St_{j-i+1}T_iI_{ni}\St_i = \St_{j-i+1} \St_i = 0$ because $j-i+1>1$.  Thus we are done.
\end{proof}

\begin{proposition} \lbl{prop:Lnstrands}
Let $CP:\prod_{m=1}^n W_m\SC(\C)\rightarrow L_n\SC(\C)$ be the functor which takes an $n$-tuple $(X_1.\ldots,X_n)$ to $\coprod_{i=1}^n I_{n(n-i+1)}(X_i)$.  We have an exact equivalence of categories
$$\St:L_n\SC(\C) \rightleftarrows \prod_{m=1}^n W_m\SC(\C) : CP,$$
where $\St$ is induced by the functors $\St_m$ for $m=1,\ldots,n$.
\end{proposition}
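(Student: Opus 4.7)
The plan is to show that $\St$ and $CP$ are mutually inverse exact functors. Exactness of $\St$ is immediate from Lemma~\ref{lem:Sti}, since each $\St_m$ is exact. For $CP$, the summand $I_{n(n-i+1)}(X_i)$ lies in $L_n\SC(\C)$ by the observation just before Lemma~\ref{lem:layering} (the restriction of $I_{nk}$ to $L_{n-k+1}\SC(\C)$ has image in $L_n\SC(\C)$), and the coproduct of layered morphisms is layered because cofibrations and weak equivalences in $L_n\SC(\C)$ are levelwise by Lemma~\ref{lem:Lnstructure}. Exactness of $CP$ then reduces to exactness of each $I_{nk}$ and of the coproduct operation.

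For $\St \circ CP \cong \mathrm{id}$, let $B = CP(X_1, \ldots, X_n)$. Since cofibers commute with coproducts, $B_j/B_{j-1}$ picks up exactly the unique summand whose padding ends at level $j$, namely $i = n-j+1$, contributing $(X_{n-j+1})_1$; all other summands contribute either $\initial$ (padding still ongoing) or cofibers of acyclic cofibrations in some $X_i$ (padding already finished). The acyclic chain of $X_{n-j+1}$ sits inside the $(n-j+1)$-st summand at later levels of $B$, so $\im_{B_k}(B_j/B_{j-1}) = (X_{n-j+1})_{k-j+1}$, which gives $\St_j(B) = X_{n-j+1}$. Matching up the indexing of $\prod_{m=1}^n W_m\SC(\C)$ with the fact that $\St_m$ takes values in $W_{n-m+1}\SC(\C)$ then yields a natural isomorphism $\St \circ CP \cong \mathrm{id}$.

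For $CP \circ \St \cong \mathrm{id}$, the natural transformations $\eta_i : I_{ni}\St_i \Rightarrow \mathrm{id}$ from Lemma~\ref{lem:Sti} assemble via the universal property of coproducts into a natural map $\coprod_i I_{ni}\St_i(A) \to A$. This is precisely the canonical decomposition $A = \coprod_{i=1}^n \St_i(A)$ recorded at the start of section~\ref{sec:combing}, and by Lemma~\ref{lem:Lnstructure} it suffices to check that it is a levelwise isomorphism, which is exactly the content of that canonical decomposition.

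The main obstacle is the careful index bookkeeping required to align the product $\prod_{m=1}^n W_m\SC(\C)$ with the reverse indexing $\St_m : L_n\SC(\C) \to W_{n-m+1}\SC(\C)$, and to verify that the padded and shifted summands inside $CP(X_1,\ldots,X_n)$ interleave correctly so that $\St_j$ extracts each $X_i$ cleanly. Modulo this combinatorics, everything reduces to formal consequences of Lemmas~\ref{lem:Sti} and~\ref{lem:Lnstructure}.
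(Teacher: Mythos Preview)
Your proposal is correct and follows essentially the same approach as the paper: both directions of the equivalence are established via the canonical decomposition $A = \coprod_i \St_i(A)$ and the identities of Lemma~\ref{lem:Sti}, and exactness is reduced to Lemma~\ref{lem:Lnstructure}. The only difference is that for $\St \circ CP \cong \mathrm{id}$ you compute $B_j/B_{j-1}$ and its images by hand, whereas the paper simply invokes the identities $\St_i I_{ni} = \mathrm{id}$ and $\St_j I_{ni}\St_i = 0$ from Lemma~\ref{lem:Sti} directly (together with the fact that $\St_j$ preserves coproducts); your explicit computation is just an unpacking of that citation, and your attention to the index reversal between $\St_m$ and $W_{n-m+1}$ is a point the paper leaves implicit.
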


\begin{proof}
We first show that these form an equivalence of categories.  From lemma \ref{lem:Sti} above, we know that the composition $\St\circ CP$ is the identity on each component (as $\St_i\St_jA$ is the zero object for $i\neq j$), and thus the identity functor.  On the other hand, the composition $CP\circ \St$ has a natural transformation $\eta=\coprod_{m=1}^n \eta_{n-m+1}:CP\circ \St\rightarrow \mathrm{id}$; it remains to show that $\eta$ is in fact a natural isomorphism.  However, for every object $A$, $\eta_A$ is simply the natural morphism $\coprod_{i=1}^n \St_i(A) \rightarrow A$, which is clearly an isomorphism.  So these are in fact inverse equivalences.

As each component of $CP$ is exact (as cofibrations and weak equivalences in $L_n\SC(\C)$ are levelwise) we know that $CP$ is exact.  On the other hand, $\St_i$ is exact for all $i$, so $\St$ is exact.  So we are done.
\end{proof}

The functor $\St$ ``combs'' an object of $L_n\SC(\C)$ by separating all of the strands of different lengths.

\section{Simplicial Polytope Complexes} \lbl{sec:sPoly}

Our goal for this section is to assemble the $f_i\C$ into a simplicial polytope complex which will mimic Waldhausen's $S_\dot$ construction.

Given $1\leq i \leq n$ we define a morphism $\partial_i^{(n)}:f_n\C\rightarrow f_{n-1}\C$ in $\PolyCpx$ induced by skipping the $i$-th term.  If $i>1$ this functor comes from $\PrePolyCpx$; if $i=1$ then we cut off the singleton element from the front, and therefore have to split the rest of the object into fibers over the different polytopes in the (newly) first set.  (This is why $\partial_i^{(n)}$ is a morphism in $\PolyCpx$ rather than in $\PrePolyCpx$.)  We define the morphism $\sigma_i^{(n)}:f_n\C\rightarrow f_{n+1}\C$ to be the morphism of $\PolyCpx$ given by the polytope functor which repeats the $i$-th stage.  For $i\leq 0$ we define the morphisms $\sigma_i^{(n)}:f_n\C\rightarrow f_n\C$ and $\partial_i^{(n)}:f_n\C\rightarrow f_n\C$ to be the identity on $f_n\C$.  Note that the only one of these morphisms that does not come from $\PrePolyCpx$ is $\partial^{(n)}_1$.

\begin{definition}
Let $s_n\C = \bigvee_{i=1}^n f_i\C$.  We define simplicial structure maps between these by
$$\partial_0 = (0:f_n\C\rightarrow s_{n-1}\C) \vee\bigvee_{i=1}^{n-1} (1:f_i\C\rightarrow f_i\C),$$
where $0$ is the polytope functor sending everything to the initial object $\initial$,
$$\partial_i = \bigvee_{i=1}^n \partial^{(j)}_{n-j+i} \qquad \hbox{for } i\geq 1,$$ and $$\sigma_i:s_n\C\rightarrow s_{n+1}\C = \bigvee_{j=1}^n \sigma^{(j)}_{n-j+i} \qquad \hbox{for } i\geq 0.$$
\end{definition}

It is easy to see that with the $\partial_i$'s as the face maps and the $\sigma_i$'s as the degeneracy maps,
$s_\dot\C$ becomes a simplicial polytope complex.

Putting proposition \ref{prop:Lnstrands} together with proposition \ref{prop:Wneqfn} we see that we have an exact equivalence $L_n\SC(\C)\rightarrow \prod_{m=1}^n \SC(f_m\C)$.  However, $\prod_{m=1}^n \SC(f_m\C)$ is exactly equivalent to $\SC(\bigvee_{m=1}^n f_m\C) = \SC(s_n\C)$.  Thus we have proved the following:

\begin{corollary} \lbl{cor:F.}
Let $H_m:\SC(f_m\C)\rightarrow \tilde W_m\SC(\C)$ be the functor in proposition \ref{prop:Wneqfn}, $\iota_m:\tilde W_m\SC(\C)\rightarrow W_m\SC(\C)$ be the natural inclusion, and $CP_n$ be the functor from proposition \ref{prop:Lnstrands}.   Then $F_n = CP_n\circ \left(\prod_{m=1}^n \iota_m\circ H_m\right)$ is an exact equivalence of categories.
\end{corollary}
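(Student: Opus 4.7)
The plan is to observe that the corollary is a pure assembly of the three exact equivalences produced in the preceding two sections: each of $H_m$, $\iota_m$, and $CP_n$ is already known to be an exact equivalence, and exact equivalences are closed under composition and finite products, so nothing more is needed.

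First I would cite proposition \ref{prop:Wneqfn} to record that each $H_m:\SC(f_m\C)\to \tilde W_m\SC(\C)$ is an exact equivalence. For the inclusion $\iota_m:\tilde W_m\SC(\C)\to W_m\SC(\C)$, I would appeal to the discussion at the start of section 4, where $\tilde W_m\SC(\C)$ was introduced as an equivalent Waldhausen subcategory of $W_m\SC(\C)$ via lemma \ref{lem:subobjects}; since $\tilde W_m\SC(\C)$ carries the Waldhausen structure induced from $W_m\SC(\C)$, the inclusion $\iota_m$ is automatically exact, and being an equivalence of underlying categories it is an exact equivalence. Taking the product over $m$, this yields an exact equivalence $\prod_{m=1}^n \SC(f_m\C)\to \prod_{m=1}^n W_m\SC(\C)$, since cofibrations, weak equivalences, and pushouts in a finite product of Waldhausen categories are computed componentwise.

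Finally I would compose with $CP_n$, which is exact and an equivalence by proposition \ref{prop:Lnstrands} (it is the explicit inverse there named for $\St$). The resulting $F_n:\prod_{m=1}^n \SC(f_m\C)\to L_n\SC(\C)$ is then a composite of exact equivalences, hence an exact equivalence, which is precisely the content of the corollary. There is no conceptual obstacle; the only thing to be careful about is that the Waldhausen structure on $\prod_m \SC(f_m\C)$ agrees with the one on $\SC(s_n\C)=\SC(\bigvee_m f_m\C)$ used informally in the paragraph preceding the corollary, which follows immediately from the definitions of wedge in $\PrePolyCpx$ and of $\SC$.
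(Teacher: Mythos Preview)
Your proposal is correct and is essentially the paper's own argument: the paragraph immediately preceding the corollary simply says that putting proposition~\ref{prop:Lnstrands} together with proposition~\ref{prop:Wneqfn} (and the identification $\prod_{m=1}^n \SC(f_m\C)\simeq \SC(s_n\C)$) yields the result, with no further proof given. Your write-up just unpacks this into the three constituent equivalences $H_m$, $\iota_m$, and $CP_n$ and observes they compose, which is exactly what the paper intends.
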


Now we know that $L_\dot\SC(\C)$ is a simplicial Waldhausen category, and $\SC(s_\dot\C)$ is a simplicial Waldhausen category.  $F_\dot$ is a levelwise exact equivalence; we would like to show that it commutes with the simplicial maps, and therefore assembles to a functor of simplicial Waldhausen categories.  This will allow us to conclude that the two constructions give equivalent $K$-theory spectra, and thus that we can work directly with the $\SC(s_\dot\C)$ definition.

\begin{proposition} \lbl{prop:sWaldfunc}
The functor $F_\dot:\SC(s_\dot\C)\rightarrow L_\dot\SC(\C)$ is an exact equivalence of simplicial Waldhausen categories.
\end{proposition}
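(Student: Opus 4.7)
The plan is as follows. Each $F_n$ is already an exact equivalence of Waldhausen categories by corollary \ref{cor:F.}, so what remains is to show that $F_\dot$ commutes with the face and degeneracy operators on both sides. Since the cofibrations and weak equivalences in $L_n\SC(\C)$ are levelwise (lemma \ref{lem:Lnstructure}) and those in $\SC(s_n\C)$ are detected summand-wise on the wedge decomposition $s_n\C = \bigvee_{m=1}^n f_m\C$, it suffices to check each equation $F_{n\pm 1}\circ\SC(d) = d\circ F_n$ on objects, one wedge summand at a time.

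The key dictionary is that $F_n$ sends the wedge summand $\SC(f_m\C)$ to objects of $L_n\SC(\C)$ supported on the length-$m$ strand $\St_{n-m+1}$, i.e.\ padded at the front with $\initial$ via $I_{n(n-m+1)}$. With this identification, $\partial_0$ on $L_n\SC(\C)$ quotients the filtration by $A_1$, killing exactly the longest strand $\St_1$ and preserving the rest (since $\St_i(\partial_0 A) = \St_{i+1}(A)$ for $i\geq 1$, using $(A_{i+1}/A_1)/(A_i/A_1)\cong A_{i+1}/A_i$); this matches the definition $\partial_0 = 0 \vee \bigvee_{i=1}^{n-1} 1$ on $s_n\C$, which is zero on the top summand $f_n\C$ and the identity on the rest. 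For $\partial_i$ with $i\geq 1$, deleting $A_i$ from the filtration removes exactly one entry of each strand, and the index $n-j+i$ in the formula $\partial_i = \bigvee_{j=1}^n \partial^{(j)}_{n-j+i}$ is arranged precisely so that $\partial^{(j)}_{n-j+i}$ picks out the correct entry of the length-$j$ summand $f_j\C$. The degeneracies $\sigma_i$ are handled by the same bookkeeping: they repeat an entry, lengthening each strand that crosses $A_i$ by one (matching $\sigma^{(j)}_{n-j+i}:f_j\C\rightarrow f_{j+1}\C$) and leaving the others untouched (matching the convention that $\sigma^{(j)}_a$ is the identity for $a\leq 0$).

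The main obstacle is a single special case: the face $\partial^{(n)}_1:f_n\C\rightarrow f_{n-1}\C$, the only summand of any face operator that does not come from $\PrePolyCpx$ but instead requires the Kleisli extension of $\SC$. After $\partial_1$ discards the singleton base $A_1$, the tail of the diagram must be redistributed as a disjoint union over its new (no longer singleton) base in order to yield an honest object of $f_{n-1}\C$. This is precisely the flattening performed by $\nu$ of lemma \ref{lem:splitup}, and the identity $\tilde\SC\iota = \SC$ from the construction of $\tilde\SC$ ensures that this Kleisli realisation of $\partial^{(n)}_1$ agrees, under $F_{n-1}$, with the strand-level effect of the $S_\dot$ face $\partial_1$ on the length-$n$ strand of $L_n\SC(\C)$. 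Once this case is verified, the remaining simplicial identities reduce to the standard $S_\dot$ simplicial identities together with the polytope-complex ones for $s_\dot\C$.
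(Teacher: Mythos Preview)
Your approach is essentially the paper's: first verify that $F_\dot$ commutes with the simplicial structure maps, then invoke the levelwise exact equivalence from corollary~\ref{cor:F.}. The paper dismisses the compatibility check in one line (``both of these diagrams commute by simple computations, since $F_n$ takes `levelwise unions'\,''), whereas you spell out the bookkeeping summand by summand; conversely, the paper adds one sentence you omit, namely that the levelwise inverses furnished by propositions~\ref{prop:Lnstrands} and~\ref{prop:Wneqfn} also assemble into a simplicial functor, so that $F_\dot$ is an equivalence \emph{of simplicial} Waldhausen categories and not merely a simplicial map that is levelwise an equivalence.

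Two small corrections to your bookkeeping. First, your description of $\partial_i$ for $i\geq 1$ should parallel what you (correctly) say for the degeneracies: deleting $A_i$ does \emph{not} remove one entry from every strand. A strand $\St_k$ with $k>i$ never reaches position $i$ and is left intact (it merely shifts index), which is exactly why the convention $\partial^{(j)}_a=\mathrm{id}$ for $a\leq 0$ is needed; the strands with $k\leq i$ are the ones that genuinely lose an entry, and at $k=i$ the shortened $\St_i$ and the unchanged $\St_{i+1}$ land in the same wedge summand of $s_{n-1}\C$, which is how the ``merging'' $A_{i+1}/A_{i-1}$ is realised on the polytope side. Second, your reduction ``it suffices to check on objects'' is not justified by the levelwise nature of cofibrations and weak equivalences; what actually makes it work is that both composites are built from the explicit functors $H_m$, $I_{n\ell}$, and $\partial^{(j)}_k$, all of which act by the same set-level recipe on morphisms as on objects (this is what the paper means by ``$F_n$ takes levelwise unions'').
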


\begin{proof}
First we will show that $F_\dot$ is, in fact, a functor of simplicial Waldhausen categories.  In particular, it suffices to show that the following two diagrams commute for each $i$:
\begin{diagram}[small]
\SC(s_n\C) & \rTo^{\sigma_i} & \SC(s_{n+1}\C) \\
\dTo^{F_n} && \dTo_{F_{n+1}} \\
L_n\SC(\C) & \rTo^{\sigma_i} & L_{n+1}\SC(\C)
\end{diagram}
and
\begin{diagram}[small]
\SC(s_n\C) & \rTo^{\partial_i} & \SC((s_{n-1}\C)^\id) & \rTo^{\nu_{s_{n-1}\C}} & \SC(s_{n-1}\C) \\
\dTo^{F_n} &&&& \dTo_{F_{n-1}}\\
L_n\SC(\C) && \rTo^{\partial_i} && L_{n-1}\SC(\C)
\end{diagram}
where the first diagram is a square because all $\sigma_i$'s come from morphisms in $\PrePolyCpx$.  Both of these diagrams commute by simple computations, since $F_n$ takes "levelwise unions".

Now by corollary \ref{cor:F.} we know that levelwise $F_n$ is an exact equivalence of Waldhausen categories.  In addition, propositions \ref{prop:Lnstrands} and \ref{prop:Wneqfn} give us formulas for the levelwise inverse equivalences; an analogous proof shows that these also assemble into a functor of simplicial Waldhausen categories.  Thus $F_\dot$ is an equivalence of simplicial Waldhausen categories, as desired.
\end{proof}

Suppose that $\C_\dot$ is a simplicial polytope complex.   We define the $K$-theory spectrum of $\C_\dot$ by
$$K(\C_\dot)_n = \left|NwS_\dot^{(n)}\SC(\C_\dot):(\Delta^\op)^{n+2} \rightarrow \Sets\right|.$$
(Note that this definition is compatible with the $K$-theory of a polytope complex, if we consider a polytope complex as a constant simplicial complex.)

\begin{lemma} \lbl{lem:spectrum}
$K(\C_\dot)$ is a spectrum, which is an $\Omega$-spectrum above level $0$.
\end{lemma}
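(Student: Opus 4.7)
The plan is to adapt Waldhausen's construction of the $K$-theory structure maps (proposition~1.5.3 of \cite{waldhausen83}) levelwise over the simplicial index of $\C_\dot$, then geometrically realize. For each fixed simplicial degree $p$, $\SC(\C_p)$ is an ordinary Waldhausen category, so Waldhausen's argument furnishes natural maps
\[
s^{(n)}_p : \bigl|NwS_\dot^{(n)}\SC(\C_p)\bigr| \longrightarrow \Omega\bigl|NwS_\dot^{(n+1)}\SC(\C_p)\bigr|
\]
obtained by adjointing the canonical map $S^1 \wedge |NwS_\dot^{(n)}\SC(\C_p)| \to |NwS_\dot^{(n+1)}\SC(\C_p)|$ coming from the inclusion $S_1 = \mathrm{id} \subset S_\dot$ together with the fact that $S_0 = \ast$. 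Each $s^{(n)}_p$ is a weak equivalence for $n \geq 1$.

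Since the construction is natural in $\C_p$, the assignment $p \mapsto s^{(n)}_p$ is a morphism of simplicial spaces (in the $\C_\dot$-direction). Realizing over this direction and composing with the canonical comparison map $|\Omega Y_\dot| \to \Omega|Y_\dot|$, where $Y_p = |NwS_\dot^{(n+1)}\SC(\C_p)|$, defines the structure map $K(\C_\dot)_n \to \Omega K(\C_\dot)_{n+1}$. This endows $K(\C_\dot)$ with the structure of a spectrum at every level; at level $0$ the structure map need not be an equivalence, but it exists, which is all that is required.

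To prove the $\Omega$-spectrum property above level $0$, I would verify two equivalences. First, the realization $|s^{(n)}_\dot|$ is a weak equivalence for $n \geq 1$ by the standard realization lemma, since $s^{(n)}_p$ is a levelwise weak equivalence of good simplicial spaces. Second, the comparison $|\Omega Y_\dot| \to \Omega|Y_\dot|$ is a weak equivalence under a uniform connectivity hypothesis on $Y_\dot$ (Bousfield--Friedlander). This latter point is the main obstacle, and it is resolved by the structural observation that $wS_0\SC(\C_p) = \ast$ makes $NwS_\dot^{(n+1)}\SC(\C_p)$ a reduced simplicial space, hence connected (in fact $n$-connected), for every $p$. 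Once the commutation of realization with $\Omega$ is established in this setting, the conclusion is immediate.
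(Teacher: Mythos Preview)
Your argument is correct and close in spirit to the paper's, but the organization differs in a way worth noting. You work levelwise in the $\C_\dot$-direction: for each $p$ you invoke Waldhausen's proposition~1.5.3 to get $s^{(n)}_p$, realize, and then must commute $\Omega$ past realization via a Bousfield--Friedlander argument. The paper instead works directly with the $(n+2)$-fold multisimplicial category and the path-loop sequence
\[
wS_1S_\dot^{(n-1)}\SC(\C_\dot) \longrightarrow PwS_\dot^{(n)}\SC(\C_\dot) \longrightarrow wS_\dot^{(n)}\SC(\C_\dot),
\]
applying the single lemma~\ref{lem:multfib} (a multisimplicial realization-fibration lemma): the composite is constant, the base is levelwise connected because $S_0=\ast$, and for each fixed $m$ the sequence is a fiber sequence by Waldhausen~1.5.3. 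Contractibility of $P$ then gives the structure map and, for $n\geq 2$, the equivalence.

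The two routes are equivalent---your Bousfield--Friedlander step is exactly the content of lemma~\ref{lem:multfib} specialized to $\ast \to Y_\dot$---but the paper's packaging avoids the explicit $|\Omega Y_\dot|\to\Omega|Y_\dot|$ comparison and the separate appeal to the realization lemma for good simplicial spaces. Your approach has the virtue of making the role of levelwise Waldhausen input completely transparent; the paper's has the virtue of handling the fiber-sequence and $\Omega$-commutation issues in one stroke.
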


In the proof of this lemma we use the following obvious generalization of lemma 5.2 in \cite{waldhausen78}.  A \textsl{fiber sequence} of multisimplicial categories is a sequence which is a fibration sequence up to homotopy after geometric realization of the nerves.

\begin{lemma} (\cite{waldhausen78}, 5.2) \lbl{lem:multfib}
Let 
$$X_{\dot\,\dot\,\dot} \longrightarrow Y_{\dot\,\dot\,\dot} \longrightarrow Z_{\dot\,\dot\,\dot}$$
be a diagram of $n$-simplicial categories.  Suppose that the following three conditions hold:
\begin{itemize}
\item the composite morphism is constant,
\item $Z_{\dot\,\dot\,\dot m}$ is connected for all $m\geq 0$, and
\item $X_{\dot\,\dot\,\dot m} \rightarrow Y_{\dot\,\dot\,\dot m} \rightarrow Z_{\dot\,\dot\,\dot m}$ is a fiber sequence for all $m\geq 0$.
\end{itemize}
Then $X_{\dot\,\dot\,\dot} \rightarrow Y_{\dot\,\dot\,\dot} \rightarrow Z_{\dot\,\dot\,\dot}$ is a fiber sequence.
\end{lemma}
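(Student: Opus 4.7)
The strategy is to reduce to Waldhausen's original Lemma 5.2 by simultaneously realizing all of the ``$\dot\,\dot\,\dot$'' simplicial directions and leaving $m$ as the remaining simplicial coordinate, so that the hypotheses become exactly the bisimplicial input Waldhausen already handles.

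First I would set, for each $m\geq 0$ and each $W\in\{X,Y,Z\}$, the space $\tilde W_m = |NW_{\dot\,\dot\,\dot m}|$, the geometric realization of the multisimplicial nerve of the fixed-$m$ slice. These assemble into simplicial spaces $\tilde X, \tilde Y, \tilde Z$ with structure maps coming from the $m$-direction, and the original diagram induces maps $\tilde X \to \tilde Y \to \tilde Z$ of simplicial spaces. A standard Fubini-type identification (realize one simplicial direction at a time) gives $|NW_{\dot\,\dot\,\dot}|\simeq |m\mapsto \tilde W_m|$, so it suffices to show that the latter diagram realizes to a fiber sequence.

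Next I would translate the three hypotheses. The constant composite of $n$-simplicial categories yields a constant composite $\tilde X\to \tilde Z$ of simplicial spaces. Connectedness of $Z_{\dot\,\dot\,\dot m}$ is, by the paper's definition of a fiber sequence of multisimplicial categories, exactly connectedness of the space $\tilde Z_m$. The levelwise-fiber-sequence hypothesis is, again by definition, the assertion that $\tilde X_m\to \tilde Y_m\to \tilde Z_m$ is a homotopy fiber sequence of spaces for every $m$.

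Finally I would invoke Waldhausen's Lemma 5.2 of \cite{waldhausen78}. Although stated there for bisimplicial objects, its argument (a quasifibration check on the realization, using connectedness of the base in each degree to control the $\pi_0$-monodromy, together with the constant-composite and levelwise-fibration hypotheses) only ever uses the simplicial-space structure in the $m$-direction. It therefore applies verbatim to the simplicial spaces $\tilde X,\tilde Y,\tilde Z$ obtained above, giving that $|\tilde X|\to |\tilde Y|\to |\tilde Z|$ is a fiber sequence; combined with the Fubini identification this is the desired conclusion. The main technical point, and the only nontrivial step, is checking that Waldhausen's proof does not secretly use bisimpliciality of its input beyond the simplicial-space structure that survives after partial realization. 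This is essentially a matter of inspecting the original argument, noting that $\tilde W_m$ may be replaced by any well-pointed simplicial space satisfying the same three conditions, rather than an independent argument.
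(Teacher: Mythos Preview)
The paper does not actually prove this lemma: it is stated without proof as an ``obvious generalization of lemma 5.2 in \cite{waldhausen78}'' and then used. Your proposal is precisely the natural way to make that phrase rigorous---realize all but the last simplicial direction to obtain a simplicial space in the remaining variable $m$, check that the three hypotheses survive this partial realization, and apply Waldhausen's original argument---so your approach is correct and is exactly what the paper is implicitly invoking.
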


We now prove lemma \ref{lem:spectrum}.

\begin{proof}[Proof of lemma \ref{lem:spectrum}]
Suppose that $X_{\cdots}$ is an $n$-simplicial object; we will write $PX_{\dot\,\dot\,\dot}$ for the $n$-simplicial object in which $PX_{m_1 \cdots m_n} = X_{(m_1+1)m_2\cdots m_n}$.

Consider the following sequence of functors.
$$wS_1S_\dot^{(n-1)}\SC(\C_\dot) \longrightarrow PwS_\dot^{(n)} \SC(\C_\dot) \longrightarrow wS_\dot^{(n)} \SC(\C_\dot),$$
where the first functor is the constant inclusion as the $0$-space, and the second is the contraction induced by $\partial_0$ on the outermost simplicial level.  As $S_0\E$ is constant for any Waldhausen category $\E$, the composite of the diagram is constant.  Similarly, for any $m\geq 0$ $wS_\dot^{(n)} \SC(\C_{m})$ is connected, as if we plug in $0$ to any of the $S_\dot$-directions we get a constant category.    In addition, by proposition 1.5.3 of \cite{waldhausen83}, this is a fiber sequence if $n\geq 2$.  Thus by lemma \ref{lem:multfib} for $n+1$-simplicial categories, the original diagram was a fiber sequence.

As $S_1\E = \E$ for all Waldhausen categories $\E$, this fiber sequence gives us, for every $n\geq 2$, an induced map $K(\C_\dot)_{n-1}\rightarrow \Omega K(\C_\dot)_n$ which is a weak equivalence.  It remains to show that we have a morphism $K(\C_\dot)_0 \rightarrow \Omega K(\C_\dot)_1$.  Considering the above sequence for $n=1$ we have
$$wS_1\SC(\C_\dot) \longrightarrow PwS_\dot\SC(\C_\dot) \longrightarrow wS_\dot\SC(\C_\dot).$$
While the third criterion from lemma \ref{lem:multfib} no longer applies, the composition is still constant and $PwS_\dot\SC(\C_\dot)$ is still contractible, so we have a well-defined (up to homotopy) morphism $K(\C_\dot)_0 \rightarrow K(\C_\dot)_1$, as desired.
\end{proof}

For all $n$ $L_\dot\SC(\C_\dot)$ is a simplification of $S_\dot\SC(\C_\dot)$, so $K(\C_\dot)_n = |NwL_\dot^{(n)}\SC(\C_\dot)|$.  Now let 
$$\tilde K(\C_\dot)_n = \left|Nw\SC(s_\dot^{(n)}\C_\dot):(\Delta^\op)^{n+2}\rightarrow \Sets\right|.$$
(This is clearly a spectrum, as the proof of \ref{lem:spectrum} translates directly to this case.)  By proposition \ref{prop:sWaldfunc} we have a morphism $\tilde K(\C_\dot)\rightarrow \tilde K(\C_\dot)$ induced by $F_\dot$, which is levelwise an equivalence (and thus an equivalence of spectra).  In particular we can take $\tilde K(\C_\dot)$ to be the definition of the $K$-theory of a simplicial polytope complex.

The main advantage of passing to simplicial polytope complexes is that it allows us to start the $S_\dot$-construction at any level, and thus compute deloopings of our $K$-theory spectra on the polytope complex level.

\begin{corollary} \lbl{cor:sigmaA}
Let $\C_\dot$ be a simplicial polytope complex, and let $\sigma \C_\dot$ be the simplicial polytope complex with
$$(\sigma \C_\dot)_k = s_k\C_k.$$
Then $\Omega K(\sigma \C_\dot) \simeq K(\C_\dot)$.
\end{corollary}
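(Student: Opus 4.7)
The plan is to identify $\sigma\C_\dot$ as the diagonal of a naturally occurring bisimplicial polytope complex and then invoke the classical fact that the geometric realization of a bisimplicial set is equivalent to the realization of its diagonal. More precisely, applying $s_\dot$ levelwise to $\C_\dot$ gives a bisimplicial polytope complex $s_\dot\C_\dot$ with $(s_\dot\C_\dot)_{k,n} = s_k\C_n$, whose first simplicial direction is the $s_\dot$-structure defined at the start of this section and whose second is inherited from $\C_\dot$; by construction $(\sigma\C_\dot)_k = (s_\dot\C_\dot)_{k,k}$, so $\sigma\C_\dot$ is nothing but the diagonal of $s_\dot\C_\dot$.

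More generally, $s_\dot^{(n)}(\sigma\C_\dot)$ is the $(n+1)$-simplicial polytope complex obtained from $s_\dot^{(n+1)}\C_\dot$ by taking the diagonal in the last two simplicial directions (the outermost $s_\dot$-slot and the $\C_\dot$-slot). Applying $Nw\SC$, geometrically realizing, and invoking the realization-diagonal equivalence in precisely those two coordinates yields a natural weak equivalence
$$\tilde K(\sigma\C_\dot)_n \;=\; \left|Nw\SC\bigl(s_\dot^{(n)}\sigma\C_\dot\bigr)\right| \;\simeq\; \left|Nw\SC\bigl(s_\dot^{(n+1)}\C_\dot\bigr)\right| \;=\; \tilde K(\C_\dot)_{n+1}.$$
Because the face and degeneracy maps of $\sigma\C_\dot$ are, tautologically, the diagonals of the corresponding bisimplicial maps of $s_\dot\C_\dot$, these equivalences respect the spectrum structure maps and assemble into a levelwise equivalence of spectra $K(\sigma\C_\dot) \to \mathrm{sh}\, K(\C_\dot)$.

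Combining this identification with Lemma \ref{lem:spectrum}, which asserts that $K(\C_\dot)$ is an $\Omega$-spectrum above level $0$, then gives $\Omega K(\sigma\C_\dot)_n \simeq \Omega K(\C_\dot)_{n+1} \simeq K(\C_\dot)_n$ for every $n\geq 0$, and hence the desired weak equivalence $\Omega K(\sigma\C_\dot) \simeq K(\C_\dot)$.

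The main obstacle is the bookkeeping needed to check that the diagonal identification is compatible with the spectrum structure maps built in the proof of Lemma \ref{lem:spectrum} out of the outer $\partial_0$ face map; however, since the outermost $\partial_0$ on $\sigma\C_\dot$ is by construction the diagonal of two $\partial_0$'s in $s_\dot\C_\dot$, the two candidate structure maps on $\tilde K(\sigma\C_\dot)_n$ and $\tilde K(\C_\dot)_{n+1}$ agree under the comparison above, so this reduces to an unwinding of definitions rather than a substantive computation.
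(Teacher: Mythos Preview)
Your proposal is correct and rests on the same idea as the paper's proof---the diagonal of a multisimplicial set computes its realization---but the paper executes it more directly. Rather than taking a partial diagonal in two coordinates and then invoking a realization--diagonal weak equivalence, the paper passes to the full diagonal in all simplicial directions at once: since $|Nw\SC(s_\dot^{(n)}\C_\dot)|$ is the realization of $[k]\mapsto Nw\SC(s_k^{(n)}\C_k)$, one has the literal equality
\[
\tilde K(\sigma\C_\dot)_n \;=\; \bigl|[k]\mapsto Nw\SC(s_k^{(n)}(s_k\C_k))\bigr| \;=\; \bigl|[k]\mapsto Nw\SC(s_k^{(n+1)}\C_k)\bigr| \;=\; \tilde K(\C_\dot)_{n+1},
\]
so $\tilde K(\sigma\C_\dot)$ is \emph{on the nose} the shift of $\tilde K(\C_\dot)$. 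This renders your compatibility check for the spectrum structure maps unnecessary. (A small terminological slip: the $s_\dot$-slot you diagonalize with the $\C_\dot$-slot is the \emph{innermost} one, not the outermost; this does not affect the argument.)
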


\begin{proof}
Geometric realizations on multisimplicial sets simply look at the diagonal, so
$$\tilde K(\C_\dot)_n = \left|[k] \longmapsto Nw\SC({s_k}^{(n)} \C_k)\right|.$$
Thus 
$$\tilde K(\sigma \C_\dot)_n = \left|[k] \longmapsto Nw\SC({s_k}^{(n)} (s_k\C_k))\right| = \left|[k] \longmapsto Nw\SC({s_k}^{(n+1)} \C_k)\right| = \tilde K(\C_\dot)_{n+1}.$$
Thus $\tilde K(\sigma \C_\dot)$ is a spectrum which is a shift of $\tilde K(\C_\dot)$, so $\tilde K(\C_\dot) \simeq \Omega \tilde K(\sigma \C_\dot)$.  As $\tilde K(\C_\dot)\simeq K(\C_\dot)$, the desired result follows.  
\end{proof}


Using this corollary we can compute a polytope complex model of every sphere.  From the examples in \cite{zakharevich10} we know that the polytope complex $\S=\bd \initial & \rSub & *\ed$ has $K(\S)$ equal to the sphere spectrum (up to stable equivalence).  In order to get $S^1$ we need to deloop $\S$.  Note that $f_n\S = \S$ for all $n$, so $s_n\S = \S^{\vee n}$.  So the simplicial polytope complex which gives $S^1$ on $K$-theory is 
$$(\S^{\vee 0}, \S^{\vee 1}, \S^{\vee 2}, \ldots, \S^{\vee n}, \ldots).$$
Since $f_n(\C\vee \D) = f_n\C\vee f_n\D$ we know that $f_n\S^{\vee m} = \S^{\vee m}$, so we compute that the simplicial polytope complex which gives $S^2$ on $K$-theory is
$$(\S^{\vee 0}, \S^{\vee 1}, \S^{\vee 2^2}, \S^{\vee 3^2}, \ldots,\S^{\vee n^2}, \ldots).$$
In general we obtain $S^k$ as the $K$-theory of
$$(\S^{\vee 0^k}, \S^{\vee 1^k}, \S^{\vee 2^k}, \S^{\vee 3^k}, \ldots, \S^{\vee n^k}, \ldots).$$
Note that in fact this works for $k=0$ as well, as long as we interpret $0^0$ to be $1$.

\section{Cofibers} \lbl{sec:cofiber}

Waldhausen's cofiber lemma (see \cite{waldhausen83}, corollary 1.5.7) gives the following formula for the cofiber of a functor
$G:\E\rightarrow \E'$.  We define $S_nG$ to be the pullback of the diagram
$$S_n\E \xrightarrow{\ S_nG\ } S_n\E' \xleftarrow{\ \ \partial_0\ \ } S_{n+1}\E'.$$
Define $K(S_\dot G)$ by
$$K(S_\dot G)_n = \left| wS_\dot^{(n)} S_\dot G \right|.$$
Then the sequence $K(\E) \rightarrow K(\E') \rightarrow K(S_\dot G)$ is a homotopy cofiber sequence.

Our goal for this section is to compute a version of this for polytope complexes.

\begin{definition}
Let $g:\C\rightarrow \D$ be a morphism in $\PolyCpx$.  We define $\D/g$ to be the simplicial polytope complex with $(\D/g)_n = f_{n+1}\D\vee s_n\C$ and the following structure maps.  For all $i>0$, $\partial_i:(\D/g)_n\rightarrow (\D/g)_{n-1}$ is induced by the two morphisms $$\partial^{(n+1)}_{i+1}:f_{n+1}\D\rightarrow f_n\D \qquad \hbox{and} \qquad \partial_i:s_n\C\rightarrow s_{n-1}\C_\dot$$  Similarly, for all $i\geq 0$, $\sigma_i:(\D/g)_n\rightarrow (\D/g)_{n+1}$ is induced by the morphisms $$\sigma^{(n+1)}_{i+1}:f_{n+1}\D\rightarrow f_{n+2}\D \qquad \hbox{and}\qquad \sigma_i:\sigma_n\C\rightarrow \sigma_{n+1}\C_\dot$$  $\partial_0$, on the other hand, is induced by the three morphisms $$\partial_1^{(n+1)}:f_{n+1}\D\rightarrow f_n\D \qquad f_ng:f_n\C\rightarrow f_n\D \qquad 1:s_{n-1}\C\rightarrow s_{n-1}\C_\dot$$
\end{definition}

When $g$ is clear from context we will often write $\D/\C$ instead of $\D/g$.  For every $n\geq 0$ we have a diagram of polytope complexes
$$\D \longrightarrow (\D/g)_n \longrightarrow s_n\C$$
given by the inclusion $\D\rightarrow f_{n+1}\D$ (as the constant objects) and the projection down to $s_n\C$.  Then $\SC(\D/g)$ is the pullback of
$$\SC(s_\dot\C)\xrightarrow{\SC(s_\dot g)} \SC(s_\dot\D) \xleftarrow{\ \ \partial_0\ \ } PSC(s_\dot\D),$$
which exactly mirrors the construction of $S_\dot G$.  (This is clear from an analysis of $S_\dot\SC(g)$ analogous to that of section \ref{sec:combing}.)  In particular we have from \cite{waldhausen83} proposition 1.5.5 and corollary 1.5.7 that
$$wS_\dot^{(n)}\SC(\C) \longrightarrow wS_\dot^{(n)}\SC(\D) \longrightarrow wS_\dot^{(n)}\SC(\D/g) \longrightarrow wS_\dot^{(n)}\SC(s_\dot\C)$$
is a fiber sequence of $n+1$-simplicial categories.	

Generalizing this to simplicial polytope complexes, we have the following proposition.

\begin{proposition} \lbl{prop:cofibers}
Let $g_\dot:\C_\dot\rightarrow \D_\dot$ be a morphism of simplicial polytope complexes, and write $(\D/g)_\dot$ for the simplicial polytope complex where $(\D/g)_n = (\D_n/g_n)_n$.  Then we have a cofiber sequence of spectra
$$K(\C_\dot) \longrightarrow K(\D_\dot) \longrightarrow K((\D/g)_\dot),$$
where the first map is induced by $g_\dot$, and the second is induced for each $n$ by the inclusion $\D_n\rightarrow (\D_n/g_n)_n$ as the constant objects of $f_{n+1}\D_n$.
\end{proposition}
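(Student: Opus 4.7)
The plan is to reduce the simplicial case to the single-morphism case that is already established in the paragraph preceding the proposition, and then realize in the outer simplicial direction using lemma \ref{lem:multfib}.

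First, fix a spectrum level $m\geq 1$ and an integer $n\geq 0$. Since $g_n:\C_n\to\D_n$ is a morphism of (non-simplicial) polytope complexes, the discussion immediately preceding the proposition gives us a fiber sequence of $(m+1)$-simplicial categories
$$wS_\dot^{(m)}\SC(\C_n) \longrightarrow wS_\dot^{(m)}\SC(\D_n) \longrightarrow wS_\dot^{(m)}\SC((\D_n/g_n)_\dot) \longrightarrow wS_\dot^{(m)}\SC(s_\dot\C_n).$$
Functoriality of all constructions in polytope complex morphisms makes this a diagram of $(m+1)$-simplicial categories that varies simplicially in $n$.

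Next, I will realize in the $n$-direction. The resulting diagram
$$[n]\mapsto wS_\dot^{(m)}\SC(\C_n) \;\longrightarrow\; [n]\mapsto wS_\dot^{(m)}\SC(\D_n) \;\longrightarrow\; [n,k]\mapsto wS_\dot^{(m)}\SC((\D_n/g_n)_k)$$
is a diagram of $(m+2)$- (resp. $(m+3)$-) simplicial categories. I want to apply lemma \ref{lem:multfib} along the $n$-direction. The composite is constant because it is constant at each $n$; the third term is connected at each $n$ (for $m\geq 1$, since plugging $0$ into any $S_\dot$-direction produces a trivial category, exactly as in the proof of lemma \ref{lem:spectrum}); and the pointwise fibration property holds by Step 1. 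So after realizing in $n$ we obtain a fibration sequence.

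The two leftmost realizations are $K(\C_\dot)_m$ and $K(\D_\dot)_m$ directly from the definition of the $K$-theory of a simplicial polytope complex. For the third term, the realization of the bisimplicial object $[n,k]\mapsto wS_\dot^{(m)}\SC((\D_n/g_n)_k)$ agrees (up to homotopy) with the realization of its diagonal $[n]\mapsto wS_\dot^{(m)}\SC((\D_n/g_n)_n)$, which by the definition $(\D/g)_n=(\D_n/g_n)_n$ is exactly $K((\D/g)_\dot)_m$. Thus at each spectrum level $m\geq 1$ the three-term sequence
$$K(\C_\dot)_m \longrightarrow K(\D_\dot)_m \longrightarrow K((\D/g)_\dot)_m$$
is a homotopy fibration.

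Finally, I will promote this to a cofiber sequence of spectra. Naturality in the $S_\dot$-coordinate shows that the maps at level $m$ and $m+1$ are compatible with the spectrum structure maps defined in lemma \ref{lem:spectrum}. Since all three spectra are $\Omega$-spectra above level $0$, a levelwise fibration above level $0$ is the same as a cofiber sequence of spectra, and the map at level $0$ is then determined up to homotopy. The main obstacle is the compatibility bookkeeping in the third step: verifying that the diagonal identification $(\D_n/g_n)_n$ really computes the same object as the full bisimplicial realization, and that the connecting map into the delooping $s_\dot\C_n$ (the fourth term of the Waldhausen fibration at each $n$) assembles correctly across $n$ so that the fibration really continues to a cofiber sequence of spectra rather than only a levelwise fibration of spaces.
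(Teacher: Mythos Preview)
Your proposal is correct and follows essentially the same strategy as the paper: reduce to the already-established fiber sequence for a single morphism $g_n$, then assemble across the outer simplicial direction via lemma~\ref{lem:multfib}, and finally pass from levelwise fibrations (for $m\geq 1$) to a cofiber sequence of spectra using that everything is an $\Omega$-spectrum above level~$0$.

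The one genuine difference is which three-term piece of the four-term Waldhausen sequence you feed into lemma~\ref{lem:multfib}. You use the first three terms
\[
wS_\dot^{(m)}\SC(\C_n)\longrightarrow wS_\dot^{(m)}\SC(\D_n)\longrightarrow wS_\dot^{(m)}\SC((\D_n/g_n)_\dot)
\]
directly, whereas the paper uses the last three terms
\[
wS_\dot^{(m)}\SC(\D_k)\longrightarrow wS_\dot^{(m)}\SC(\D_k/g_k)\longrightarrow wS_\dot^{(m)}\SC(s_\dot\C_k)
\]
and then recovers the desired sequence by comparing $g_\dot$ to $1_{\C_\dot}$ in the style of Waldhausen~1.5.6. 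Your route is a bit more direct and avoids the comparison step; the paper's route makes the connectedness hypothesis of lemma~\ref{lem:multfib} immediate without invoking an $S_\dot$-direction, since $s_0\C_k$ is already trivial. Either way the bisimplicial-to-diagonal identification for the third term is the standard fact you cite. Your closing worry about the fourth term $s_\dot\C_n$ is misplaced: in your argument you never need it, so there is nothing to assemble there.
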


\begin{proof}
As all cofiber sequences in spectra are also fiber sequences, it suffices to show that this is a fiber sequence.  As homotopy pullbacks in spectra are levelwise (see, for example, \cite{kreck05}, section 18.3), it suffices to show that for all $n\geq 0$, $K(\C_\dot)_n\rightarrow K(\D_\dot)_n \rightarrow K((\D/g)_\dot)_n$ is a homotopy fiber sequence.  However, as we know that above level $0$ all of these are $\Omega$-spectra it in fact suffices to show this for $n>0$.

Thus in particular we want to show that for all $n>0$ the sequence
\[wS_\dot^{(n)}\SC(\C_\dot)\longrightarrow wS_\dot^{(n)} \SC(\D_\dot) \longrightarrow wS_\dot^{(n)} \SC((\D/g)_\dot)\]
is a homotopy fiber sequence of $n+1$-simplicial categories.  Let $\D_\dot/g_\dot$ be the bisimplicial polytope complex where the $(k,\ell)$-th polytope complex is $(\D_k/g_k)_\ell$. It will suffice to show that 
\[wS_\dot^{(n)}\SC(\D_\dot) \longrightarrow wS_\dot^{(n)}\SC(\D_\dot/g.) \longrightarrow wS_\dot^{(n)} \SC(s_\dot\C_\dot)\]
is a fiber 	sequence of $n+2$-simplicial categories (where $\D_\dot$ is now considered a bisimplicial polytope complex); in this diagram the second morphism is induced by the projection $(\D_k/g_k)_\ell\rightarrow s_\ell\C_k$ for all pairs $(k,\ell)$.  Then by comparing this sequence for the functor $1:\C_\dot\rightarrow \C_\dot$ to the functor $g_\dot$ we will be able to conclude the desired result.  (In this approach we follow Waldhausen in \cite{waldhausen83}, 1.5.6.)

We show this by applying lemma \ref{lem:multfib}, where we fix the index of the simplicial direction of $\C_\dot$ and $\D_\dot$.  The composition of the two functors is constant, as we first include $\D_\dot$ and then project away from it, and as we do not fix any of the $S_\dot$ indices the last space will be connected.  Thus we want 
\[wS_\dot^{(n)}\SC(\D_k) \longrightarrow wS_\dot^{(n)}\SC(\D_k/g_k)\longrightarrow wS_\dot^{(n)}\SC(s_\dot\C_k)\]
to be a fiber sequence, which holds by our discussion above.  So we are done. 
\end{proof}

\section{Wide and tall subcategories}

We now take a slight detour into a more computational direction.  Consider the case of a polytope complex $\D$, together with a subcomplex $\C$.  We know that the inclusion $\C\rightarrow \D$ induces a map $K(\C)\rightarrow K(\D)$.  The goal of this section is to give sufficient conditions on $\C$ which will ensure that this map is an equivalence.

We start off the section with an easy computational result which will make later proofs much simpler.

\begin{lemma} \lbl{lem:cofiltpre}
For any object $Y\in w\SC(\C)$, $(Y\downarrow w\SC(\C))$ is a cofiltered preorder.
\end{lemma}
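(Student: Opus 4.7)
The plan is to unwind weak equivalences in $\SC(\C)$ in terms of pure covering sub-maps and then appeal to the Grothendieck topology on $\C$ to produce common refinements. After passing to the simplification in which every acyclic cofibration is represented by a pure covering sub-map (as used in the proof of proposition \ref{prop:Wneqfn}), an object of $(Y\downarrow w\SC(\C))$ corresponds, up to isomorphism of the target, to a pure covering sub-map $\bd Y & \lCover & Z\ed$ in $\Tw(\C_p)$, and a morphism in the coslice from $\bd Y & \lCover & Z_1\ed$ to $\bd Y & \lCover & Z_2\ed$ corresponds to a factorization $\bd Z_1 & \lCover & Z_2\ed$ compatible with the sub-maps from $Y$.

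For the preorder condition, I would argue that any such factorization is unique. Given two parallel morphisms in the coslice, each is represented by a sub-map $\bd Z_1 & \lCover & Z_2\ed$ sitting over the identity of $Y$. Such a sub-map is determined by its underlying set-level shuffle together with the sub-maps on individual components, and both are forced by the requirement that the composite with $\bd Y & \lCover & Z_1\ed$ equals the fixed sub-map $\bd Y & \lCover & Z_2\ed$; this is the same faithfulness phenomenon exploited in lemma \ref{lem:proj1}.

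For the cofiltered condition it suffices to produce a common predecessor. Given $\bd Y & \lCover & Z_1\ed$ and $\bd Y & \lCover & Z_2\ed$, I would form the pullback $Z_3 = Z_1\times_Y Z_2$ in $\Tw(\C_p)$; it carries a pure covering sub-map $\bd Y & \lCover & Z_3\ed$ because the Grothendieck topology on $\C$, and hence the pointwise topology on $\C^\id$, is stable under pullbacks, so a pullback of a covering family is again a covering family. This $Z_3$ admits the required sub-maps to both $Z_1$ and $Z_2$ by construction. Non-emptiness of the coslice is witnessed by the identity on $Y$, and the coequalization condition in the definition of cofiltered is automatic in a preorder.

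The main obstacle is the first step: cleanly establishing the dictionary between objects and morphisms of $(Y\downarrow w\SC(\C))$ and factorizations of pure covering sub-maps in $\Tw(\C_p)$, and extracting the uniqueness of such factorizations. Once this dictionary is in place, both the preorder and cofiltered conditions reduce to formal consequences of the axioms of the Grothendieck topology on $\C$.
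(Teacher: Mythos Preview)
Your approach is essentially the same as the paper's. Both arguments strip a weak equivalence $Y\to A$ down to its covering sub-map leg $Y'\to Y$ in $\Tw(\C_p)$: the paper shows that a morphism $h:A\to B$ under $Y$ corresponds to a factorization of $q$ through $p$ and invokes that $(\Tw(\C_p)^{\Sub}\downarrow Y)$ is a preorder (exactly your uniqueness-of-factorization step), and for the (co)filtered part both you and the paper take the vertical pullback $Z=Y'\times_Y Y''$ and use that pullbacks of covering sub-maps are covering.
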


\begin{proof}
In order to see that $(Y\downarrow w\SC(\C)$ is a preorder it suffices to show that given any diagram
\begin{diagram}
A & \lWeakEquiv^f & Y & \rWeakEquiv^g & B
\end{diagram}
in $w\SC(\C)$ there exists at most one morphism $\bd A & \rWeakEquiv & B\ed$ that makes the diagram commute.  This diagram is represented by a diagram in $\Tw(\C_p)$
\begin{diagram}[small]
&& Y' && && Y'' \\
A & \ldTo(2,1)^\sigma && \rdCover(2,1)^p & Y & \ldCover(2,1)^q && \rdTo(2,1)^\tau & B
\end{diagram}
where $\sigma$ and $\tau$ are isomorphisms.  Then morphisms $h:A\rightarrow B$ such that $g = hf$ correspond exactly to factorizations of $q$ through $p$; as $(\Tw(\C_p)^\Sub\downarrow T)$ is a preorder, there is at most one of these and we are done. 

Thus it remains to show that this preorder is cofiltered; in particular, we want to find an object below $A$ and $B$ under $Y$.  Given a shuffle $\sigma'$, let $f_{\sigma'}\in\SC(\C)$ be the pure shuffle defined by $\sigma'$; similarly, for a sub-map $p'$ let $f^{p'}\in \SC(\C)$ be the pure sub-map defined by $p'$.  Let $Z = Y'\times_{Y} Y''$ be the vertical pullback of $p$ and $q$.  Then, the pullback of 
\begin{diagram}
A & \rTo^{\sigma^{-1}} & Y' & \lCover & Z
\end{diagram}
gives a weak equivalence $\bd A & \rWeakEquiv & Z\ed$, and analogously we have a weak equivalence $\bd B & \rWeakEquiv & Z\ed$.  As these commute under $Y$ we see that $(Y\downarrow w\SC(\C))$ is cofiltered, as desired.
\end{proof}

The first condition that we need in order to have an equivalence on $K$-theory is that we must have the same $K_0$; more specifically, we need every object of $\SC(\D)$ to be weakly equivalent to something in $\SC(\C)$.  As a condition on polytope complexes, this turns into the following definition.

\begin{definition}
Suppose that $\D$ is a polytope complex and $\C\rightarrow \D$ is an inclusion of polytope complexes.  We say that $\C$ has \textsl{sufficiently many covers} if for every object $B\in \D$ there exists a finite covering family $\{\bd B_\alpha & \rSub & B\ed\}_{\alpha\in A}$ such that the $B_\alpha$ are pairwise disjoint, and such that every $B_\alpha$ is horizontally isomorphic to an object of $\C$.
\end{definition}

Our first approximation result is almost obvious: if we cover all weak equivalence classes of objects, and all morphisms between these objects, then we must have an equivalence on $K$-theory.  More formally, we have the following:

\begin{lemma} \lbl{lem:level0eq}
Suppose that $\C$ has sufficiently many covers, and that $\SC(\C)$ sits inside $\SC(\D)$ as a full subcategory.  Then the induced map $|w\SC(\C)| \rightarrow |w\SC(\D)|$ is a homotopy equivalence.
\end{lemma}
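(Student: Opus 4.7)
The plan is to apply Quillen's Theorem~A to the inclusion functor $i\colon w\SC(\C)\hookrightarrow w\SC(\D)$; it then suffices to show that for every object $Y\in w\SC(\D)$, the comma category $(Y\downarrow i)$ has contractible nerve. I would establish this by verifying that $(Y\downarrow i)$ is a non-empty filtered preorder, in direct analogy with lemma~\ref{lem:cofiltpre}.

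For non-emptiness, write $Y=\SCob{y}{j}$ with $y_j\in \D$. The sufficiently many covers hypothesis applied to each $y_j$ produces a pairwise disjoint covering family whose members are horizontally isomorphic to objects of $\C$. Reassembling these into a single formal sum $Y^\ast$ yields an object of $\SC(\C)$ together with a pure covering sub-map representing a weak equivalence $Y\to Y^\ast$ in $w\SC(\D)$, i.e.\ an object of $(Y\downarrow i)$.

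For the filtered property, given $(A,f)$ and $(B,g)$ in $(Y\downarrow i)$, I would first run the pullback construction from the proof of lemma~\ref{lem:cofiltpre} entirely inside $w\SC(\D)$ to produce $Z\in\SC(\D)$ together with weak equivalences $A\to Z$ and $B\to Z$ commuting under $Y$. Applying the covering step above to $Z$ then yields a refinement $Z^\ast\in\SC(\C)$ with a weak equivalence $Z\to Z^\ast$, and composition supplies weak equivalences $A\to Z^\ast$ and $B\to Z^\ast$ sitting over $Y$. Since $\SC(\C)\hookrightarrow\SC(\D)$ is full and a weak equivalence in $\SC$ is determined by its representation via pure covering sub-maps in $\Tw(\C_p)\subseteq\Tw(\D_p)$, these composites lie in $w\SC(\C)$. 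The preorder property is inherited from the argument of lemma~\ref{lem:cofiltpre} carried out in $w\SC(\D)$: any morphism completing a diagram of weak equivalences under $Y$ is unique in $w\SC(\D)$, and between objects of $\SC(\C)$ it automatically lies in $w\SC(\C)$ by fullness.

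The main obstacle is the bookkeeping needed to ensure that the refinement $Z^\ast$ and the composite weak equivalences genuinely live inside $w\SC(\C)$ rather than only inside $w\SC(\D)$; once this compatibility is in hand, $(Y\downarrow i)$ is a non-empty filtered preorder, hence has contractible nerve, and Theorem~A delivers the homotopy equivalence $|w\SC(\C)|\simeq |w\SC(\D)|$.
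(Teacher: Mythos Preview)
Your proposal is correct and follows essentially the same route as the paper: apply Quillen's Theorem~A, then show each comma category $(Y\downarrow w\SC(\C))$ is a nonempty (co)filtered preorder, using lemma~\ref{lem:cofiltpre} for the preorder structure and the sufficiently-many-covers hypothesis for nonemptiness. The only difference is packaging: where you explicitly build $Z$ in $\SC(\D)$ and then refine to $Z^*\in\SC(\C)$, the paper observes once and for all that fullness plus ``nonempty for every $Y$'' already forces the filtered condition (since the intermediate $Z$ from lemma~\ref{lem:cofiltpre} can itself be pushed into $\SC(\C)$ by nonemptiness of $(Z\downarrow w\SC(\C))$, and the resulting composites lie in $w\SC(\C)$ by fullness), so only the nonemptiness paragraph needs to be written out.
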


\begin{proof}
Using Quillen's Theorem A (from \cite{quillen73}) it suffices to show that for all $Y\in w\SC(\D)$ the category $(Y\downarrow w\SC(\C))$ is contractible.  Now as $(Y\downarrow w\SC(\D))$ is a preorder (by lemma \ref{lem:cofiltpre}) and $\SC(\C)$ is a subcategory of $\SC(\D)$, we know that $(Y\downarrow w\SC(\C))$ is also a preorder; thus to show that it is contractible we only need to know that it is cofiltered.  In addition, as $\SC(\C)$ is a full subcategory of $\SC(\D)$, it in fact suffices to show that we have enough objects for it to be cofiltered, so it suffices to show that this category is nonempty for all $Y$.

So let us show that for all $Y\in w\SC(\D)$ the category $(Y\downarrow w\SC(\C))$ is nonempty. We need to show that for any $Y\in w\SC(\D)$ there exists a $Z\in w\SC(\C)$ an a weak equivalence $\bd Y & \rWeakEquiv& Z\ed$.  Write $Y = \SCob{y}{i}$.  For each $i\in I$, let $\{\bd y^{(i)}_\alpha & \rSub & y_i\ed\}_{\alpha\in A_i}$ be the cover guaranteed by the sufficient covers condition, and let $\beta^{(i)}_\alpha:\bd y^{(i)}_\alpha & \rTo & z^{(i)}_\alpha\ed$ be the horizontal isomorphisms guaranteed by the sufficient covers condition.  Then the induced vertical morphism $\bd \{y^{(i)}_\alpha\}_{i\in I,\alpha\in A_i} & \rCover & \SCob{y}{i}\ed$ is a covering sub-map and the vertical morphism $\beta:\bd\{y^{(i)}_\alpha\}_{i\in I,\alpha\in A_i}&\rTo & \{z^{(i)}_\alpha\}_{i\in I,\alpha\in A_i}\ed$ is a horizontal isomorphism, so the morphism in $\SC(\D)$ represented by this is a weak equivalence.  But by definition $\{z^{(i)}_\alpha\}_{i\in I,\alpha\in A_i}$ is in $\SC(\C)$, so we are done.
\end{proof}

In the statement of the previous lemma we had two conditions.  One was a condition on $\C$, and one was a condition on $\SC(\C)$.  We would like to get those conditions down to conditions just about $\C$, as that will make using this kind of results easier.  In order for a morphism of $\SC(\D)$ to be in $\SC(\C)$ we need some representative of the morphism to come from a diagram in $\Tw(\C)$; in particular, this means that both the representing object, and the morphisms which are the components of the vertical and horizontal components, must be in $\C$.

If $\C$ is not a full subcomplex of $\D$ then much of this analysis becomes much more difficult, so for the rest of this section we will assume that $\C$ is a full subcomplex of $\D$.  This means that as long as we know that a representing object of the morphism is in $\Tw(\C)$, it is sufficient to conclude that the morphism will be in $\SC(\C)$.  In particular, we want to be able to conclude that just because the source and target of a morphism are in $\SC(\C)$ then the morphism must be, as well.  We can translate this into the following condition.

\begin{definition}  Suppose that $\C$ is a full subcomplex of $\D$.
We say that $\C$ is \textsl{wide} (respectively, \textsl{tall}
  if for any horizontal (resp. vertical) morphism $\bd A & \rTo & B \ed\in \D$,
  if $B$ is in $\C$ then so is $A$.
\end{definition}

If $\C$ is a full subcomplex of $\D$ then we know that $\Tw(\C)$ is a full subcategory of $\Tw(\D)$. If $\C$ happens to also be wide, we know something even stronger: given any horizontally connected component of $\Tw(\D)$, either that entire component is in $\Tw(\C)$, or nothing in the component is in $\Tw(\C)$.  Analogously, if $\C$ is tall we can say the same thing for vertically connected components.  This lets us conclude that $\SC(\C)$ is a full subcategory of $\SC(\D)$.

\begin{lemma}
Let $\C$ be a full subcomplex of $\D$.  If $\C$ is wide or tall then $\SC(\C)$ is a full subcategory of $\SC(\D)$.
\end{lemma}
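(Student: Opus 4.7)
The plan is to unwind the definition of $\SC$ and use the fact that morphisms between objects are equivalence classes of spans in $\Tw$. Recall that a morphism $f\colon A\to B$ in $\SC(\D)$ is represented by a diagram $\bd A & \lCover & X & \rTo & B\ed$ in $\Tw(\D_p)$, with the left leg a vertical covering sub-map and the right leg a horizontal morphism; it already lies in $\SC(\C)$ exactly when some representing span can be chosen inside $\Tw(\C_p)$. So I would take $A=\SCob{a}{i}$ and $B=\SCob{b}{j}$ with all $a_i,b_j\in\C$, pick an arbitrary representing span, and show that the intermediate object $X=\SCob{x}{k}$ must itself have all components in $\C$.

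First I would read off the componentwise data from the two legs of the span: the horizontal morphism $\bd X & \rTo & B\ed$ is packaged by an indexing map $\sigma\colon K\to J$ together with horizontal morphisms $\bd x_k & \rTo & b_{\sigma(k)}\ed$ in $\D$, and the covering sub-map $\bd X & \rCover & A\ed$ is packaged by an indexing map $p\colon K\to I$ together with vertical sub-maps $\bd x_k & \rSub & a_{p(k)}\ed$ in $\D$. Now if $\C$ is wide, then since every $b_{\sigma(k)}$ lies in $\C$, wideness applied to each $\bd x_k & \rTo & b_{\sigma(k)}\ed$ forces $x_k\in\C$. Dually, if $\C$ is tall, then since every $a_{p(k)}$ lies in $\C$, tallness applied to each $\bd x_k & \rSub & a_{p(k)}\ed$ forces $x_k\in\C$. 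Either way every component of $X$ lies in $\C$, and because $\C$ is a full subcomplex of $\D$ the component morphisms in both legs are also morphisms of $\C$; hence the whole span sits in $\Tw(\C_p)$.

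Finally I would check that the equivalence relation on spans does not mess this up: any morphism between representing spans in $\Tw(\D_p)$ whose endpoints already lie in $\Tw(\C_p)$ is itself in $\Tw(\C_p)$ because $\Tw(\C)$ is a full subcategory of $\Tw(\D)$ (coming from fullness of $\C\subseteq\D$). Thus every morphism in $\SC(\D)(A,B)$ is already a morphism in $\SC(\C)(A,B)$, which is exactly what fullness of $\SC(\C)\subseteq\SC(\D)$ requires. The only thing to be careful about is matching the pointwise wide/tall conditions on $\C$ with the componentwise structure of objects and legs of spans in $\Tw$; once that translation is in place the argument is mechanical, and is essentially a rephrasing of the preceding remark that wide (resp.\ tall) subcomplexes contain whole horizontally (resp.\ vertically) connected components of $\Tw(\D)$.
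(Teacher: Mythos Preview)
Your proposal is correct and follows essentially the same route as the paper: pick a representing span for a morphism between objects of $\SC(\C)$, read off the componentwise horizontal and vertical morphisms, and apply wideness (to the horizontal leg) or tallness (to the vertical leg) to force each component of the intermediate object into $\C$, then invoke fullness of $\C\subseteq\D$. One minor point: in the paper's convention the vertical leg of a general morphism is just a sub-map, not necessarily a covering sub-map, but this does not affect your argument since you only use the componentwise vertical morphisms $\bd x_k & \rSub & a_{p(k)}\ed$.
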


\begin{proof}
Let $\SCob{a}{i}, \SCob{b}{j}\in \SC(\C)$, and let $f:\SCob{a}{i}\rightarrow \SCob{b}{j}$ be a morphism in $\SC(\D)$.  This morphism is represented by a diagram
\begin{diagram}
\SCob{a}{i} & \lSub^p & \SCob{a'}{k} & \rTo^\sigma & \SCob{b}{j}
\end{diagram}
In order for $f$ to be in $\SC(\C)$ it suffices to show that each $a'_k$ is in $\C$, as $\C$ is a full subcategory of $\D$.  Now if $\C$ is wide then for all $k\in K$ we have a horizontal morphism $\Sigma_k:\bd a_k & \rTo & b_{\sigma(k)}\ed$.  As $\C$ is wide and each $b_j\in \C$ we must have $a'_k\in \C$ for all $k$.  So $\SC(\C)$ is a full subcategory of $\SC(\D)$.  If, on the other hand, $\C$ is tall then for each $k\in K$ we consider the vertical morphism $P_k:\bd a'_k & \rSub & a_{p(k)}\ed$.  As $a_i\in \C$ for all $i\in I$ we must also have $a'_k\in \C$ for all $k'\in K$.  So $\SC(\C)$ is a full subcategory of $\SC(\D)$, and we are done.
\end{proof}

Which leads us to the following approximation result.

\begin{proposition} \lbl{lem:widetalls.}
Suppose that $\C$ is a subcomplex of $\D$ with sufficiently many covers.  If $\C$ is wide or tall, the inclusion $\C\rightarrow \D$ induces an equivalence $K(\C)\rightarrow K(\D)$.
\end{proposition}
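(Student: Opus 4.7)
The plan is to reduce the equivalence of $K$-theory spectra, at every spectrum level, to the base-level statement of Lemma \ref{lem:level0eq}. Since $\tilde K(\C)_n = |Nw\SC(s_\dot^{(n)}\C)|$ and geometric realizations of multisimplicial sets can be computed levelwise, it suffices to show that for every multi-index $(k_1,\ldots,k_n)$ the inclusion $s_{k_1}\cdots s_{k_n}\C\to s_{k_1}\cdots s_{k_n}\D$ induces a weak equivalence on $|w\SC(-)|$. By Lemma \ref{lem:level0eq} this holds once one verifies, at each layer of the nested construction, that the subcomplex has sufficiently many covers and induces a full inclusion on $\SC$; the lemma immediately preceding the proposition reduces the latter condition to wideness or tallness.

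The whole argument therefore reduces to the following preservation claim: \emph{if $\C$ is a subcomplex of $\D$ which is wide (resp.\ tall) with sufficiently many covers, then so is $f_m\C\subseteq f_m\D$ for every $m\geq 1$.} Since $s_m=\bigvee_{i=1}^m f_i$ and all three properties are trivially stable under wedge sums, the preservation statement for $s_m$ follows from that for $f_m$, and iterating gives preservation under arbitrary $s_{k_1}\cdots s_{k_n}$.

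The ``full subcomplex'' property for $f_m$ is immediate from the pointwise morphism definitions, and wideness/tallness transfers componentwise: a horizontal (resp.\ vertical) morphism in $f_m\D$ is a levelwise horizontal (resp.\ vertical) morphism in $\D^\id$, so if every level-component of the target lies in $\C$, the same holds for the source. For the sufficient covers property I would first apply the hypothesis on $\C\subseteq \D$ to the unique element $a_1$ of the level-$1$ singleton of $(A_1,\ldots,A_m)\in f_m\D$, producing a disjoint cover $\{b_\alpha\}$, and then pull this cover back through the refinements $A_m\to\cdots\to A_2\to A_1$ to obtain a disjoint family $\{(b_\alpha,A_2^{(\alpha)},\ldots,A_m^{(\alpha)})\}$ sub-mapping to $(A_1,\ldots,A_m)$. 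In the tall case, transporting the horizontal isomorphism $b_\alpha\to c_\alpha\in\C$ carries each $A_i^{(\alpha)}$ to a vertical refinement of $c_\alpha$, which lies in $\C^\id$ by tallness, so each $(b_\alpha,A_2^{(\alpha)},\ldots,A_m^{(\alpha)})$ is horizontally isomorphic to an object of $f_m\C$.

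The main obstacle is the wide case, in which $b_\alpha\in\C$ is automatic (by wideness applied to the horizontal isomorphism $b_\alpha\to c_\alpha$) but the components of each pulled-back $A_i^{(\alpha)}$ need only lie in $\D$. Here the plan is to iterate the refinement procedure by induction on $m$: refine the components of $A_2^{(\alpha)}$ using sufficient covers in $\D$ (so that, by wideness, each cover object actually lies in $\C$), pull back the higher levels against this finer level-$2$ refinement, and recurse upward through the levels. The result will be a finite, pairwise-disjoint cover of $(A_1,\ldots,A_m)$ by objects of $f_m\C$. The genuinely technical step is verifying that these nested pullbacks remain compatible at every level and that the resulting family is indeed a covering family in the topology on $f_m\C$; this is careful bookkeeping rather than new ideas.
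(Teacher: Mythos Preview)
Your overall strategy---reduce to Lemma~\ref{lem:level0eq} at the base, then show that being a full wide/tall subcomplex with sufficiently many covers is preserved by $f_m$ (hence by $s_m$ and its iterates)---is exactly the paper's.  The preservation of fullness and of wideness/tallness is handled identically.

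The one genuine difference is in the verification that $f_m\C$ has sufficiently many covers in $f_m\D$.  You cover via the level-$1$ singleton $A_1$ and then pull back through $A_2,\ldots,A_m$, which forces a case split: the tall case goes through cleanly, but the wide case needs the iterated refinement-and-pullback you describe.  The paper instead covers via the \emph{deepest} level $A_m$.  Writing $A_m=\{a_k\}_k$, one chooses for each $a_k$ a disjoint cover $\{B_{k,\alpha}\}$ by objects horizontally isomorphic to objects of $\C$, and then uses the \emph{constant} diagrams $\bar B_{k,\alpha}=(\{B_{k,\alpha}\}\leftarrow\cdots\leftarrow\{B_{k,\alpha}\})$.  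Because each $A_m\to A_i$ is already a covering sub-map, the family $\{\bar B_{k,\alpha}\to A\}$ is a covering family at every level simultaneously, and each $\bar B_{k,\alpha}$ is visibly horizontally isomorphic to an object of $f_m\C$.  This works uniformly in the wide and tall cases with no induction on $m$ and no bookkeeping about compatibility of nested pullbacks.  Your argument is not wrong, but you are working harder than necessary; the ``constant object over the last level'' trick replaces the entire second half of your proposal.
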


\begin{proof}
Lemma \ref{lem:level0eq} shows that $K(\C)_i\rightarrow K(\D)_i$ is an
equivalence for $i=0$.  If we can show that for all $n$, $s_n\C$ is a wide or
tall subcomplex of $s_n\D$ with sufficiently many covers we will be done, as we
will be able to induct on $i$ to see that the induced morphism is an equivalence
on all levels.  In fact, note that it suffices to show that $f_n\C$ is a wide
(resp.  tall) subcomplex of $f_n\D$ with sufficiently many covers.

First we show that $f_n\C$ has sufficiently many covers in $f_n\D$.  Consider an
object $D$
\begin{diagram}
  D_1 & \lCover & \cdots & \lCover & D_n
\end{diagram}
of $f_n\D$.  As $\C$ has sufficiently many covers in $\D$ there exists a
covering family $\{\bd B_\alpha & \rSub & D_n\ed\}_{\alpha\in A}$ of $D_n$ in
which every object is horizontally isomorphic to an object of $\C$.  Given an
object $X\in \D$, let $\bar X\in f_n\D$ be the constant object where $\bar X_k =
\{X\}$.  Then the family $\{\bd \bar B_\alpha & \rSub & D\ed\}_{\alpha\in A}$ is
a covering family of $D$.  As each $B_\alpha$ was horizontally isomorphic to an
object in $\C$, each $\bar B_\alpha$ is horizontally isomorphic to something in
$f_n\C$, and we are done.

The fact that if $\C$ was a tall (resp. wide) subcomplex of $\D$ then $f_n\C$ is
a tall (resp. wide) subcomplex of $\D$ follows directly from the definition of
$f_n\C$ and $f_n\D$.
\end{proof}

Finally, we can generalize this result to simplicial polytope complexes.

\begin{corollary} 
Suppose that $\C_\dot\rightarrow \D_\dot$ is a morphism of simplicial polytope complexes.  If for each $n$, the morphism $\C_n\rightarrow \D_n$ is an inclusion of $\C_n$ as a subcomplex into $\D_n$ and satisfies the conditions of lemma \ref{lem:widetalls.}, then the induced map $K(\C_\dot)\rightarrow K(\D_\dot)$ is an equivalence.
\end{corollary}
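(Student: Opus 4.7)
My plan is to reduce to proposition \ref{lem:widetalls.} applied at each simplicial level of $\C_\dot$, and then to propagate the resulting levelwise equivalences through the outer simplicial direction by means of the realization lemma for simplicial spaces.

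For each fixed $n\geq 0$, I would first rewrite $K(\C_\dot)_n$ as the realization over $k$ of a simplicial space whose $k$-th level is canonically identified with $K(\C_k)_n$. This uses the fact that the geometric realization of a multisimplicial set can be computed iteratively (equivalently, via its diagonal); this is the same principle invoked in the proof of corollary \ref{cor:sigmaA}. Under this identification, the map $K(\C_\dot)_n\to K(\D_\dot)_n$ becomes the realization of the simplicial-space map
\[[k]\longmapsto \bigl(K(\C_k)_n \longrightarrow K(\D_k)_n\bigr).\]

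Next I would apply proposition \ref{lem:widetalls.} levelwise. By hypothesis, for every $k$ the inclusion $\C_k\to\D_k$ is wide or tall and has sufficiently many covers, so proposition \ref{lem:widetalls.} gives an equivalence of spectra $K(\C_k)\simeq K(\D_k)$, and in particular a weak equivalence at level $n$ for every $n$. Thus the simplicial-space map above is a levelwise weak equivalence. Both simplicial spaces arise as partial realizations of nerves of simplicial sets, hence are good (proper, equivalently Reedy cofibrant), and the realization lemma applies to conclude that $K(\C_\dot)_n\to K(\D_\dot)_n$ is a weak equivalence for every $n$. A levelwise equivalence of spectra is a stable equivalence, which finishes the proof.

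The only real obstacle is bookkeeping: verifying that, when one partially realizes the multisimplicial set $NwS_\dot^{(n)}\SC(\C_\dot)$ in all directions except the outer simplicial polytope complex one, the resulting simplicial space has $k$-th level weakly equivalent to $K(\C_k)_n$. Once this identification is in place, no new ingredient beyond proposition \ref{lem:widetalls.} and the standard realization lemma is required, and the corollary follows at once.
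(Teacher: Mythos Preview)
The paper states this corollary without proof, evidently regarding it as an immediate consequence of proposition~\ref{lem:widetalls.} together with the diagonal description of multisimplicial realizations already used in corollary~\ref{cor:sigmaA}. Your proposal is exactly the natural way to make that implication precise: realize $K(\C_\dot)_n$ as the geometric realization of $[k]\mapsto |NwS_\dot^{(n)}\SC(\C_k)|$, apply proposition~\ref{lem:widetalls.} at each $k$, and invoke the realization lemma. This is correct, and your acknowledged bookkeeping point (identifying the $k$-th level of the partial realization with $K(\C_k)_n$) is routine once one uses the paper's convention that $K$ of a polytope complex agrees with $K$ of the associated constant simplicial polytope complex.
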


We finish up this section with a couple of applications of this result.

\subsection*{More explicit formula for suspensions and cofibers.}

For any polytope complex $\C$ and any positive integer $n$ we have a polytope functor $\C\rightarrow f_n\C$ given by including and object $a$ as the constant object
$$\bar a = \begin{diagram}
\{a\} & \lCover & \{a\} & \lCover & \cdots & \lCover & \{a\}
\end{diagram}$$
This includes $\C$ as a wide subcomplex of $f_n\C$.  In fact, $\C$ also has sufficiently many covers.  Given any object 
$$A = \begin{diagram}
A_1 & \lCover & A_2 & \lCover & \cdots & \lCover & A_n
\end{diagram},$$
write $A_n = \SCob{a}{i}$.  Then the family $\{\bd \bar a_i & \rSub & A\ed\}_{i\in I}$ is a covering family, and each $\bar a_i\in \C$.  Thus we have an inclusion $\C^{\vee n} \rightarrow s_n\C$ which induces an equivalence on $K$-theory.

In fact, this is an equivalence on the $K$-theory of simplicial polytope complexes, as this inclusion commutes with the simplicial structure maps.  Thus $s_\dot\C$ can be considered to be a bar construction on $\C$, as the structure maps of $s_\dot\C$, when restricted to the constant objects, exactly mirror the morphisms of the bar construction.  (The $0$-th face map forgets the first one, the next $n-1$ glue successive copies of $\C$ together, and the $n$-th one forgets the last one, exactly as the bar construction does.  The degeneracies each skip one of the $\C$'s in $s_{n+1}\C$.)

Generalizing to simplicial polytope complexes, this gives the following simplifications of the formulas for $\sigma \C_\dot$ and $(\D/g).$ from corollary \ref{cor:sigmaA} and proposition \ref{prop:cofibers}:

\begin{corollary} \lbl{cor:comp}
  Let $g:\C_\dot\rightarrow \D_\dot$ be a morphism of simplicial polytope complexes.  Let $\sigma \C_\dot$ and $(\D/g).$ be the simplicial polytope complexes defined by 
  \[(\sigma\C_\dot)_n = \C_n^{\vee n} \qquad \hbox{and} \qquad (\D/g)_n = \D_n \vee \C_n^{\vee n}.\]
  Then $\Omega K(\sigma\C_\dot) \simeq K(\C_\dot)$ and 
  \[K(\C_\dot) \longrightarrow K(\D_\dot) \longrightarrow K((\D/g).)\]
  is a cofiber sequence of spectra.
\end{corollary}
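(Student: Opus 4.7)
The plan is to deduce both halves of the corollary from Corollary~\ref{cor:sigmaA} and Proposition~\ref{prop:cofibers} by applying Proposition~\ref{lem:widetalls.} (and the simplicial corollary stated immediately after it) to replace each occurrence of $s_n\C$ with the simpler constant wedge $\C^{\vee n}$, and each occurrence of $f_{n+1}\D$ with $\D$.

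For the suspension statement, let $\tilde\sigma\C_\dot$ denote the simplicial polytope complex of Corollary~\ref{cor:sigmaA}, with $(\tilde\sigma\C_\dot)_k = s_k\C_k$, so that $\Omega K(\tilde\sigma\C_\dot)\simeq K(\C_\dot)$. By the explicit discussion in this subsection, the inclusion $\C\to f_n\C$ as constant objects $\bar a$ exhibits $\C^{\vee n}$ as a wide subcomplex of $s_n\C$ with sufficiently many covers. I would first verify that these levelwise inclusions assemble into a morphism of simplicial polytope complexes $\sigma\C_\dot\to \tilde\sigma\C_\dot$; this is essentially the remark already made that $s_\dot\C$ restricted to constant objects reproduces the bar construction. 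The simplicial form of Proposition~\ref{lem:widetalls.} then yields $K(\sigma\C_\dot)\simeq K(\tilde\sigma\C_\dot)$, giving the first claim.

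For the cofiber sequence, Proposition~\ref{prop:cofibers} provides a cofiber sequence $K(\C_\dot)\to K(\D_\dot)\to K((\tilde\D/g)_\dot)$ with $(\tilde\D/g)_n = f_{n+1}\D_n\vee s_n\C_n$. The same strategy applies: the inclusion $\D_n\to f_{n+1}\D_n$ as constant objects is wide with sufficiently many covers (by the same argument as for $s_n\C$), and wedging it with $\C_n^{\vee n}\to s_n\C_n$ produces an inclusion $(\D/g)_n\to (\tilde\D/g)_n$ that is again wide with sufficiently many covers. Provided these assemble to a morphism of simplicial polytope complexes, another application of the simplicial form of Proposition~\ref{lem:widetalls.} transfers the cofiber sequence to the form stated in the corollary.

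The main obstacle is checking simplicial compatibility with $\partial_0$ on $(\D/g)_\dot$, since this is the only structure map that does not come from $\PrePolyCpx$; it mixes the three components $\partial_1^{(n+1)}$ on $f_{n+1}\D_n$, $f_ng_n$ on the wedge summands, and a reindexing identity on $s_{n-1}\C$. One must verify that, on constant diagrams $\bar a\in f_{n+1}\D_n$, the cutting-off-and-refibering behavior of $\partial_1^{(n+1)}$ degenerates to the identity on $\D_{n-1}$, and that the restriction of $f_ng_n$ to $\C_n^{\vee n}$ becomes $g_n$ applied to each wedge summand; the remaining face maps and all degeneracies come from $\PrePolyCpx$ and so commute with the constant-object inclusion coordinatewise. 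Once this routine bookkeeping is in place, the approximation result does the rest.
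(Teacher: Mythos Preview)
Your proposal is correct and follows essentially the same route as the paper: both arguments use the constant-object inclusion $\C\hookrightarrow f_n\C$ to exhibit $\C^{\vee n}$ (and $\D_n\vee\C_n^{\vee n}$) as a wide subcomplex with sufficiently many covers, then invoke the simplicial form of Proposition~\ref{lem:widetalls.} after checking compatibility with the simplicial maps, with $\partial_0$ singled out as the only nontrivial case. One small slip: on constant objects $\partial_1^{(n+1)}$ degenerates not to the identity on $\D_{n-1}$ but to $\partial_0:\D_n\to\D_{n-1}$ (the simplicial face of $\D_\dot$ still has to be applied), and similarly the restriction of $f_ng_n$ becomes $\partial_0g_n$ rather than just $g_n$; this is exactly what the paper records after the corollary.
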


It is necessary to check that these inclusions commute with the simplicial maps, but it is easy to see that they do.  Note that on $(\D/g)_n$, $\partial_0$ is induced by the three morphisms
$$\partial_0:\D_n\rightarrow \D_{n-1} \qquad g\partial_0:\C_n\rightarrow \D_{n-1} \qquad \partial_0^{\vee n-1}:\C_n^{\vee n-1} \rightarrow \C_{n-1}^{\vee n-1}.$$

\subsection*{Local data on homogeneous manifolds.}

Let $X$ be a geodesic $n$-manifold with a preferred open cover $\{U_\alpha\}_{\alpha\in A}$ such that for any $\alpha\in A$ and any two points $x,y\in U_\alpha$ there exists a unique geodesic connecting $x$ and $y$.  (For example, $X = E^n$, $S^n$, or $\mathbb{H}^n$ are examples of such $X$.  In the first and third case we take our open cover to be the whole space; in the second case we take it to be the set of open hemispheres.)  We then define a polytope complex $\C_X$ in the following manner.  Define a \textsl{simplex} of $X$ to be a convex hull of $n+1$ points all sitting inside some $U_\alpha$ with nonempty interior, and a \textsl{polytope} of $X$ to be a finite union of simplices.  We then define $\C_{Xv}$ to be the poset of polytopes of $X$ under inclusion with the obvious topology.  Given two polytopes $P$ and $Q$, we define a \textsl{local isometry of $P$ onto $Q$} to be a triple $(U,V,\varphi)$ such that $U$ and $V$ are open subsets of $X$ with $P\subseteq U$ and $Q\subseteq V$, $\varphi:U\rightarrow V$ is an isometry of $U$ into $V$, and $\varphi(P) = Q$.  Then we define a horizontal morphism $\bd P & \rTo & Q\ed$ to be an equivalence class of local isometries of $P$ onto $Q$, with $(U,V,\varphi)\sim(U',V',\varphi')$ if $\varphi|_{U\cap U'} = \varphi'|_{U\cap U'}$.  Under these definitions it is clear that $\C_X$ is a polytope complex.

Now let $U\subseteq X$ be any preferred open subset of $X$ with the preferred cover $\{U\}$.  Then $\C_U$ is also a polytope complex and we have an obvious inclusion map $\C_U\rightarrow \C_X$.

\begin{lemma}
If $X$ is homogeneous then $\C_U\rightarrow \C_X$ induces an equivalence $K(\C_U)\rightarrow K(\C_X)$.
\end{lemma}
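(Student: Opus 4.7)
The plan is to invoke Proposition~\ref{lem:widetalls.}, so I need to exhibit $\C_U$ as a full tall subcomplex of $\C_X$ which has sufficiently many covers. Fullness is essentially a tautology: the objects of $\C_U$ are exactly the polytopes of $X$ contained in $U$, and any local isometry $(U',V',\varphi)$ between two such polytopes $P,Q\subseteq U$ can be shrunk to a local isometry entirely inside $U$ by replacing $U'$ with $\varphi^{-1}(V'\cap U)\cap U$, so the horizontal hom-sets agree. Tallness is the observation that any subpolytope of a polytope contained in $U$ is itself contained in $U$.

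The content of the proof is the ``sufficiently many covers'' condition, which is where homogeneity enters. Fix once and for all a point $q_0\in U$ and a radius $r>0$ with the metric ball $B(q_0,r)$ contained in $U$. Given an arbitrary polytope $B$ of $X$, I would subdivide $B$ simplicially, for instance by iterated barycentric subdivision, into finitely many simplices $S_1,\dots,S_N$, each of diameter less than $r/2$ and each contained in some preferred open set $U_{\alpha_i}$ so that it is a genuine simplex of $X$. For each $i$, pick a vertex $p_i\in S_i$ and use homogeneity of $X$ to produce a global isometry $\varphi_i$ of $X$ with $\varphi_i(p_i)=q_0$; then $\varphi_i(S_i)\subseteq B(q_0,r)\subseteq U$, and a sufficiently small open neighborhood $W_i$ of $S_i$ is also mapped into $U$, so the triple $(W_i,\varphi_i(W_i),\varphi_i|_{W_i})$ represents a horizontal isomorphism from $S_i$ to the polytope $\varphi_i(S_i)\in\C_U$. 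The family of inclusions $\{S_i\hookrightarrow B\}$ is then the required cover by polytopes horizontally isomorphic to objects of $\C_U$.

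The main obstacle I anticipate is purely technical: one must ensure that the pieces produced by the subdivision really are simplices of $X$ in the paper's sense (each contained in some preferred $U_\alpha$) and that they are pairwise disjoint in the polytope complex sense rather than merely non-overlapping as subsets. The first point is a standard Lebesgue-number argument against the preferred cover, combined with the fact that iterated barycentric subdivision has mesh tending to zero. The second relies on the convention (implicit in the vertical topology on $\C_X$) that simplices sharing only a lower-dimensional face have initial vertical product, which is the usual scissors-congruence convention. With these verified, Proposition~\ref{lem:widetalls.} immediately gives the equivalence $K(\C_U)\simeq K(\C_X)$.
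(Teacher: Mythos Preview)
Your proposal is correct and follows essentially the same route as the paper: verify that $\C_U$ is a tall (full) subcomplex, then use homogeneity together with iterated barycentric subdivision to produce, for any polytope of $X$, a finite cover by small simplices each locally isometric to a simplex in $U$, and conclude via Proposition~\ref{lem:widetalls.}. The paper's own proof is terser---it simply asserts tallness and sketches the subdivision argument in two sentences---whereas you spell out the fullness check, fix a basepoint and radius in $U$, and flag the disjointness convention; but the underlying argument is identical.
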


\begin{proof}
Clearly $\C_U$ is a tall subcomplex of $\C_X$.  Given any polytope $P\subseteq \C_X$ we can triangulate it by triangles small enough to be in a single chart.  Once we are in a single chart we can subdivide each triangle by barycentric subdivision until the diameter of every triangle in the triangulation is small enough that the triangle can fit inside $U$.  As $X$ is homogeneous there is a local isometry of any such triangle into $U$, and thus $\C_U$ has sufficiently  many covers.  Thus by proposition \ref{lem:widetalls.} the induced map $K(\C_U)\rightarrow K(\C_X)$ is an equivalence.
\end{proof}

Any isometry $X\rightarrow Y$ which takes preferred open sets into preferred open sets induces a polytope functor $\C_X\rightarrow \C_Y$ (which is clearly an isomorphism).  Thus the statement of proposition \ref{prop:XsimeqY} is exactly that all morphisms in the diagram
\[\C_X \longleftarrow \C_U \xrightarrow{\C_\varphi} \C_V \longrightarrow \C_Y\]
are equivalences, which follows easily from the above lemma.

\subsection*{Non-examples}

We conclude this section with a couple of non-examples.  First, take any polytope complex $\C$ and consider the polytope complex $\C\vee\C$.  $\C$ sits inside this (as the left copy, for example) and is tall by definition, but the K-theories of these are not equivalent as the left copy of $\C$ does not contain sufficiently many covers.  (In particular, it can't cover anything in the right copy of $\C$.)  However, if we added ``twist'' isomorphisms --- horizontal isomorphisms between corresponding objects in the left and right copies of $\C$ --- then the left $\C$ would contain sufficiently many covers, and the K-theories of these would be equal.

As our second non-example we will look at ideals of a number field.  Let $K$ be a number field with Galois group $G$.  Let the objects of $\C$ be the ideals of $K$.  We will have a vertical morphism $\bd I & \rSub & J\ed$ whenever $I | J$, and we will have our horizontal morphisms induced by the action of $G$.   The $K$-theory of this will be countably many spheres wedged together, one for each prime power ideal of $K$.  (See \cite{zakharevich10} section 5 for a more detailed exploration of this example.)  The prime ideals sit inside $\C$ as a wide subcomplex, but they do not give an equivalence because if $\mathfrak{p}^k$ is a prime power ideal for $k>1$ then it can't be covered by prime ideals.  The K-theories of these two will in fact be equivalent, since they are both countably many spheres wedged together, but the inclusion does not induce an equivalence.

\appendix

\section{The $S.$ construction} \lbl{app:S.} 

In this appendix we formally introduce the definitions and concepts first mentioned in section \ref{sec:S.intro}.  For more details on the basic definition of the $S_\dot$-construction, see \cite{waldhausen83}.  For the rest of this section, $\E$ is a Waldhausen category.

\begin{lemma}
Suppose that $\E'$ is a Waldhausen subcategory of $\E$, and $\E'$ contains all weak equivalences and cofibrations of $\E$.  Then the inclusion $\E'\rightarrow \E$ induces the identity morphism on $K$-theory.
\end{lemma}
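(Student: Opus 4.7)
The plan is to argue that the inclusion $\iota:\E'\hookrightarrow \E$ is literally the identity on the data used by the $S_\dot$-construction, hence induces an isomorphism of categories $wS_n\E'\to wS_n\E$ at each simplicial level; taking nerves and iterating $n$ times will then yield equality of the spaces $|wS_\dot^{(n)}\E'|$ and $|wS_\dot^{(n)}\E|$.

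First I would unpack the objects of $S_n\E$: each is determined by a chain of cofibrations $A_{11}\rCofib A_{21}\rCofib\cdots\rCofib A_{n1}$ together with a choice of iterated cofibers $A_{ij}$ and the associated quotient maps $A_{(i+k)j}\to A_{(i+k)(j+k)}$. Since $\E'$ contains every cofibration of $\E$, each such chain already lies in $\E'$; since $\E'$ is a Waldhausen subcategory, it is closed under pushouts along cofibrations, so the cofiber objects and their quotient maps may be chosen inside $\E'$. Choosing these compatibly in $\E$ and $\E'$, the objects of $S_n\E$ and $S_n\E'$ coincide.

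Next I would check morphisms. A morphism in $wS_n\E$ is a levelwise family of weak equivalences $\varphi_{ij}\colon A_{ij}\to B_{ij}$ compatible with the cofibration and cofiber structure. Since $\E'$ contains every weak equivalence of $\E$, each $\varphi_{ij}$ is already a morphism of $\E'$, so the entire morphism belongs to $wS_n\E'$. Combined with the previous paragraph this gives $wS_n\E'=wS_n\E$ as categories, so their nerves are identical and $|wS_\dot\E'|=|wS_\dot\E|$.

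The only mild technical point — and the one I would treat most carefully — is verifying that the argument iterates: one needs to know that $S_n\E'$ is itself a simplification of $S_n\E$, so that the same reasoning applies in the outer $S_\dot$-direction. Both cofibrations and weak equivalences in $S_n\E$ are defined in terms of levelwise weak equivalences together with pushouts of cofibrations of $\E$; since $\E'$ contains all such data, $S_n\E'$ inherits all cofibrations and weak equivalences of $S_n\E$. Iterating $n$ times therefore gives $|wS_\dot^{(n)}\E'|=|wS_\dot^{(n)}\E|$, so $K(\iota)$ is the identity on each spectrum level, completing the proof.
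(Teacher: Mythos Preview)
Your argument is correct and follows essentially the same line as the paper's: both observe that every morphism appearing in the diagrams that make up $wS_\dot^{(n)}\E$ is either a cofibration, a cofiber map, or a weak equivalence, all of which lie in $\E'$, so the inclusion is literally the identity on these simplicial categories. One small imprecision worth tightening: your phrase ``choosing these compatibly'' suggests it suffices that \emph{some} choice of cofibers lies in $\E'$, but for the induced map to be the identity you need \emph{every} object of $S_n\E$ to already be an object of $S_n\E'$; this holds because $\E'$ contains all objects and all isomorphisms of $\E$ (these being cofibrations), so an arbitrary quotient map in $\E$ is the composite of a quotient map formed inside $\E'$ with an isomorphism, and hence lies in $\E'$.
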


\begin{proof}
This is true by simple observation of the definition of the $K$-theory of a Waldhausen category.  On a Waldhausen category, the $S_\dot$-construction uses only cofibrations in the definitions of the objects.  As the cofibrations in $S_\dot\E$ are in particular levelwise cofibrations, this means that for all $n\geq0$, in $S_\dot^{(n)}\E$ all morphisms in every diagram representing an object will be either cofibrations or cofiber maps.  Thus all of the objects of $S_\dot^{(n)}\E$ will be objects of $S_\dot^{(n)}\E'$.

In order to obtain the $n$-th space of $K(\E)$ we look at the geometric realization of $wS_\dot^{(n)}\E$.  Every $k$-simplex of this consists of a diagram, each of whose morphisms is either a cofibration, cofiber map, or weak equivalence.  We know that all weak equivalences of $\E$ are in $\E'$, and thus every simplex of $K(\E)_n$ is in $K(\E')_n$, which means that the natural inclusion is actually the identity morphism, as desired.
\end{proof}

Note that there exists a minimal simplification of $\E$, as given any family of simplifications $\{\E_\alpha\}_{\alpha\in A}$, the category $\bigcap_{\alpha\in A} \E_\alpha$ will also be a simplification of $\E$.

We now turn our attention to a result about when a pair of adjoint functors between Waldhausen categories induces a homotopy equivalence between the K-theories.

\begin{lemma}
We say that $F:\E\rightleftarrows \E':G$ is an \textsl{exact adjunction} if $F$ is left adjoint to $G$, both $F$ and $G$ are exact, and the unit and counit are objectwise weak equivalences.  An exact adjunction induces an adjoint pair of functors $wF:w\E\rightleftarrows w\E':wG$, and for all $n\geq 0$ the adjunction $S_nF:S_n\E \rightleftarrows S_n\E':S_nG$ is also an exact adjunction.
\end{lemma}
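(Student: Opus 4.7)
The plan is to split the lemma into three pieces: (a) show that $wF$ and $wG$ are well-defined and adjoint, (b) show that $S_nF$ and $S_nG$ are exact functors with a levelwise adjunction, and (c) show the induced unit and counit on $S_n$ remain objectwise weak equivalences.

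For (a), since $F$ and $G$ are exact they preserve weak equivalences, so they restrict to functors $wF:w\E\to w\E'$ and $wG:w\E'\to w\E$. To see that the adjunction restricts, I would use the standard formula: for $f:FX\to Y$ the transpose is $\tilde f = Gf\circ\eta_X:X\to GY$. If $f$ is a weak equivalence then $Gf$ is a weak equivalence (since $G$ is exact) and $\eta_X$ is a weak equivalence by hypothesis, so $\tilde f$ is a weak equivalence. The reverse direction uses the counit being a weak equivalence symmetrically. Naturality of the hom-set bijection is inherited directly from the ambient adjunction, so $wF\dashv wG$.

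For (b), I would argue first that $S_n$ is functorial with respect to exact functors: applying $F$ levelwise to an object $A\in S_n\E$ gives a diagram whose entries are in $\E'$, whose horizontal maps are cofibrations (since $F$ preserves them), and whose induced squares remain cofiber sequences (since $F$ preserves pushouts along cofibrations and the zero object). The same levelwise application turns a morphism $\varphi:A\to B$ in $S_n\E$ into one in $S_n\E'$, and it sends the pushout condition defining cofibrations in $S_n\E$ to the corresponding one in $S_n\E'$, again because $F$ preserves pushouts along cofibrations. Thus $S_nF$ is exact, and similarly $S_nG$. The unit and counit are obtained by applying $\eta$ and $\epsilon$ componentwise; these define morphisms in $S_n$ because $\eta$ and $\epsilon$ are natural, so they commute with every cofibration and every connecting map in the diagram. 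The triangle identities hold levelwise, giving $S_nF\dashv S_nG$.

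For (c), recall that a weak equivalence in $S_n\E$ is by definition one whose components $\varphi_{ij}$ are all weak equivalences in $\E$. The induced unit at $A\in S_n\E$ is levelwise $\eta_{A_{ij}}$, and the induced counit at $B\in S_n\E'$ is levelwise $\epsilon_{B_{ij}}$; both are weak equivalences in $\E$ and $\E'$ respectively by hypothesis, hence weak equivalences in $S_n\E$ and $S_n\E'$.

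The main obstacle is verifying that $S_nF$ preserves the cofibration condition on morphisms of $S_n\E$, which is phrased in terms of a pushout square; this reduces to the fact that exact functors preserve pushouts along cofibrations, but it is the only step that is not pure bookkeeping. Once that is established, everything else in the lemma follows from the observation that all structure in $S_n\E$, including weak equivalences, cofibrations, unit, and counit, is defined componentwise.
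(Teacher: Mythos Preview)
Your proof is correct and follows essentially the same route as the paper: verify that exactness lets $F,G$ restrict to $w\E,w\E'$, then note that everything on $S_n$ is defined levelwise so the adjunction and the weak-equivalence conditions on the unit and counit are inherited componentwise. The only cosmetic difference is in part (a): the paper argues via the unit--counit description (since $\eta,\epsilon$ are already assumed to be weak equivalences, they are morphisms of $w\E,w\E'$ and the triangle identities are inherited), whereas you argue via the transpose formula; both are equally valid and lead to the same conclusion. Your treatment of (b), spelling out why $S_nF$ preserves cofibrations using that exact functors preserve pushouts along cofibrations, is more detailed than the paper's, which simply cites the standard fact that exact functors induce exact functors on $S_n$.
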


In such a case we sometimes say that $F$ is \textsl{exactly adjoint} to $G$.  When such an adjunction is an equivalence, we call it an \textsl{exact equivalence}.  Note that any equivalence which is exact in both directions is an exact equivalence, as all isomorphisms are weak equivalences.

\begin{proof}
  As $F$ and $G$ are exact we know that $wF$ and $wG$ are well-defined. In order to see that they are adjoint, note that the existence of a unit and counit are sufficient; as the unit and counit are natural weak equivalences they pass to natural transformations inside $w\E$ and $w\E'$, and thus give us the adjunction, as desired.

Now we need to show that an exact adjunction induces an exact adjunction on $S_n$.  As an exact functor passes to an exact functor on the $S_n$-level all that we must show is that the two functors $S_nF$ and $S_nG$ will be adjoint.  However, as both $S_n\E$ and $S_n\E'$ are diagram categories, with $S_nF$ and $S_nG$ defined levelwise, the adjunction follows directly from the adjunction between $F$ and $G$.  (The unit and counit will be defined levelwise.  So we are done.  
\end{proof}

This lemma implies that for every $n$ we get an induced pair of adjoint functors
$$wS_n^{(n)} \E \leftrightarrows wS_n^{(n)}\E',$$
and thus a levelwise homotopy equivalence between the $K$-theory spectra.  Thus we can conclude the following corollary:

\begin{corollary}
  An exact adjunction induces a homotopy equivalence between the $K$-theory spectra of the Waldhausen categories.
\end{corollary}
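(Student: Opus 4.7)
The plan is to iterate the preceding lemma and then combine it with the classical fact that an adjunction between small categories realises to a homotopy equivalence on classifying spaces.

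First I would iterate the second clause of the preceding lemma: if $F\dashv G$ is an exact adjunction then so is $S_nF\dashv S_nG$, and inductively so is $S_{n_1}\cdots S_{n_k}F\dashv S_{n_1}\cdots S_{n_k}G$ for every tuple $(n_1,\ldots,n_k)$. Applying the first clause at each multisimplicial degree converts this into an honest adjoint pair of functors
\[wS_\dot^{(k)}F : wS_\dot^{(k)}\E \rightleftarrows wS_\dot^{(k)}\E' : wS_\dot^{(k)}G\]
for every $k\geq 0$, with unit and counit which are objectwise weak equivalences in $w\E$ and $w\E'$.

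Next I would invoke the standard fact that an adjoint pair of functors between small categories induces a pair of inverse homotopy equivalences on the nerves. The point is that the unit $\eta:1\Rightarrow GF$ and counit $\epsilon:FG\Rightarrow 1$ are natural transformations, and any natural transformation $\alpha:F_0\Rightarrow F_1$ is the same data as a functor $\mathcal{C}\times[1]\rightarrow \mathcal{D}$; taking nerves turns this into a simplicial homotopy between $NF_0$ and $NF_1$. Hence at each multisimplicial level the realisations $|wS_\dot^{(k)}F|$ and $|wS_\dot^{(k)}G|$ are mutually inverse homotopy equivalences.

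Finally, because $F$ and $G$ are exact, they commute strictly with the face and degeneracy maps of the $S_\dot$-construction, so the unit and counit at different values of $n$ all come from the same underlying $\eta$ and $\epsilon$ applied pointwise. The levelwise homotopy equivalences therefore assemble into morphisms of spectra $K(F):K(\E)\rightarrow K(\E')$ and $K(G):K(\E')\rightarrow K(\E)$, and a levelwise weak equivalence of spectra is a stable equivalence, giving the desired homotopy equivalence. The bulk of the content is supplied by the preceding lemma; the only remaining (and mild) obstacle is to verify that the classical natural-transformation-to-homotopy argument survives under taking weak equivalence subcategories, which it does because the unit and counit are objectwise weak equivalences by hypothesis.
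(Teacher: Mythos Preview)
Your proposal is correct and follows essentially the same approach as the paper. The paper's argument (which appears in the paragraph immediately preceding the corollary rather than in a proof environment) is simply: iterate the preceding lemma to obtain adjoint pairs $wS_\dot^{(n)}\E \leftrightarrows wS_\dot^{(n)}\E'$ for every $n$, and invoke the standard fact that adjoint functors induce homotopy equivalences on classifying spaces; you have just spelled out these two steps, and the assembly into a spectrum map, in greater detail.
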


Lastly we have a computational result which allows us to simplify the categories that we study when we try to compute with the $S_\dot$-construction.

\begin{lemma} \lbl{lem:subobjects}
Suppose that $\E'$ is a subcategory of $\E$ with the property that given any morphism $f:A\rightarrow B$ in $\E$, there exists a factorization $f = hg$ where $h$ is an isomorphism and $g\in \E'$.  If we let $\tilde S_n\E$ be the full subcategory of $S_n\E$ containing all those objects from $S_n\E'$ then $\tilde S_n\E$ is exactly equivalent to $S_n\E$.
\end{lemma}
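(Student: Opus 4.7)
The plan is to exhibit a functor $R\colon S_n\E \to \tilde S_n\E$ which, together with the full inclusion $\iota\colon \tilde S_n\E \hookrightarrow S_n\E$, forms an adjoint equivalence whose unit and counit are natural isomorphisms, and then to verify that $R$ is exact.  Before constructing $R$, I would extract two consequences of the factorization hypothesis.  Applying it to the identity $1_B$ shows that every object $B \in \E$ is isomorphic to some object of $\E'$.  Applying it to a cofibration $f = hg$ shows that $g = h^{-1}f$ is the composition of a cofibration with an isomorphism, hence itself a cofibration; consequently every cofibration in $\E$ is isomorphic to a cofibration in $\E'$.

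Next I would build $R$ by induction along the first column of an $S_n$-object.  Given $A \in S_n\E$, recall from the $F_n$ discussion that $A$ is determined up to unique isomorphism by the chain $A_{11} \hookrightarrow A_{21} \hookrightarrow \cdots \hookrightarrow A_{n1}$.  Pick $A_{11}' \in \E'$ with an isomorphism $A_{11}' \xrightarrow{\sim} A_{11}$, and inductively factor each composite $A_{i1}' \xrightarrow{\sim} A_{i1} \hookrightarrow A_{(i+1)1}$ as a cofibration $A_{i1}' \hookrightarrow A_{(i+1)1}'$ in $\E'$ followed by an isomorphism.  Complete to a full $S_n$-diagram by successively choosing pushouts and applying the factorization axiom at each new cofiber to replace it by an isomorphic object of $\E'$.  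This produces $R(A) \in \tilde S_n\E$ together with an isomorphism $\alpha_A\colon \iota R(A) \xrightarrow{\sim} A$; arrange $R(A) = A$ and $\alpha_A = 1$ whenever $A$ is already in $\tilde S_n\E$, so that $R\iota = \mathrm{id}$ strictly.  Functoriality of $R$ is then forced by fullness and faithfulness of $\iota$: for any $f\colon A \to B$ in $S_n\E$, the conjugate $\alpha_B^{-1} f \alpha_A$ is a morphism between objects of $\tilde S_n\E$, hence the $\iota$-image of a unique $R(f)\colon R(A) \to R(B)$, and uniqueness guarantees $R$ preserves identities and composition.

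The $\alpha_A$'s then assemble into a natural isomorphism $\iota R \Rightarrow \mathrm{id}$, which together with the identity $\mathrm{id} = R\iota$ satisfies the triangle identities, giving an adjoint equivalence.  It is an exact equivalence in the paper's sense because $\iota$ is exact by construction, $R$ preserves cofibrations, weak equivalences, and cofiber pushouts (conjugation by isomorphisms preserves all three, and $\tilde S_n\E$ inherits its Waldhausen structure from $S_n\E$), and both unit and counit are componentwise isomorphisms hence weak equivalences.  The main technical obstacle is the coherent replacement of the \emph{full} $S_n$-diagram rather than just its first column: one must check that the cofiber-square relations survive the object-by-object replacement, but this is routine once a choice of pushouts is fixed and the factorization axiom is applied to each new entry in turn.
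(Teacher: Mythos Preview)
Your proof is correct and follows essentially the same approach as the paper: both reduce to showing that every object of $S_n\E$ is isomorphic to one in $S_n\E'$, and both do this by inductively factoring the cofibrations in the first row using the hypothesis.  The paper is slightly more economical in that it simply invokes essential surjectivity of the full inclusion rather than building the quasi-inverse $R$ explicitly, but your construction of $R$ by conjugation is the standard way to promote essential surjectivity to an equivalence and amounts to the same argument.
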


Note that $\E'$ automatically inherits a Waldhausen structure from $\E$.

\begin{proof}
It suffices to show that every object of $S_n\E$ will be isomorphic to an object from $S_n\E'$.  The condition on $\E'$ ensures that $\E'$ contains all objects of $\E$, as for any object $A\in \E$ if we factor the identity morphism into $hg$ as given in the statement, $g\in \E'$ which means that $A\in \E'$.

Note that it suffices to show that we can replace the longest row of cofibrations by cofibrations in $\E'$, as any two objects of $S_n\E$ which are equal on the first line are isomorphic.  As $\E'$ is a Waldhausen category, if we have an object of $S_n\E$ with first row from $S_n\E'$, we must have some object in $S_n\E'$ which is isomorphic to it.  Thus it now remains to show that given a diagram
\begin{diagram}
A_1 & \rCofib^{\iota_1} & A_2 & \rCofib^{\iota_2} & \cdots & \rCofib^{\iota_{n-1}} & A_n
\end{diagram}
in $\E$ there exists an isomorphism of diagrams to such a diagram in $\E'$.

We will show that given such a diagram where $\iota_1,\ldots,\iota_{k-1}\in \E'$ there exists an isomorphic diagram where $\iota'_1,\ldots,\iota'_k$ are in $\E'$.  The base case where $k=1$ is obvious.  Assuming that we have the case for $k-1$, factor $\iota_k$ into $g:A_k\rightarrow A'$ and $h:A'\rightarrow A_{k+1}$.  The following diagram shows that we have the case for $k$:
\begin{diagram}[small]
A_1 & \rCofib^{\iota_1} & \cdots & \rCofib^{\iota_{k-1}} & A_k & \rCofib^g & A' & \rCofib^{\iota_{k+1}h} & A_{k+2} & \rCofib^{\iota_{k+2}} & \cdots & \rCofib^{\iota_{n-1}} & A_n \\
\dEqual && && \dEqual && \dTo^h_\cong && \dEqual &&&& \dEqual \\
A_1 & \rCofib^{\iota_1} & \cdots & \rCofib^{\iota_{k-1}} & A_k & \rCofib^{\iota_k} & A_{k+1} & \rCofib^{\iota_{k+1}} & A_{k+2} & \rCofib^{\iota_{k+2}} & \cdots & \rCofib^{\iota_{n-1}} & A_n 
\end{diagram}
So we are done.
\end{proof}

\section{Proof of lemma \ref{lem:Lnstructure}} \lbl{app:technical}

This section concerns the proof of lemma \ref{lem:Lnstructure}:
\begin{lemLnstructure}
$L_n\SC(\C)$ is a Waldhausen category which is a simplification of $F_n\SC(\C)$.  The cofibrations (resp. weak equivalences) in $L_n\SC(\C)$ are exactly the morphisms which are levelwise cofibrations (resp. weak equivalences).
\end{lemLnstructure}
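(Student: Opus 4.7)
The plan is to verify the Waldhausen axioms for $L_n\SC(\C)$ with cofibrations (resp.\ weak equivalences) being the morphisms that are levelwise cofibrations (resp.\ levelwise weak equivalences) in $\SC(\C)$, and then to check that these coincide with the cofibrations (resp.\ weak equivalences) that $L_n\SC(\C)$ inherits from $F_n\SC(\C)$. That $L_n\SC(\C)$ is even a subcategory of $F_n\SC(\C)$ is immediate: identities are trivially layered, and pasting two commutative squares of the layering form yields a commutative square, so layered morphisms are closed under composition.

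The key structural input is that every cofibration in $\SC(\C)$ splits as a coproduct, so for any $f:A\rightarrow B$ in $F_n\SC(\C)$ we may write $A_{i+1}\cong A_i\amalg (A_{i+1}/A_i)$ and $B_{i+1}\cong B_i\amalg (B_{i+1}/B_i)$, with the canonical maps $\bd A_{i+1}/A_i & \rCofib & A_{i+1}\ed$ and $\bd B_{i+1}/B_i & \rCofib & B_{i+1}\ed$ realized as summand inclusions. Using this I would establish three facts. First, every $F_n$-cofibration is layered: the cofibration condition $\bd B_i\cup_{A_i} A_{i+1} & \rCofib & B_{i+1}\ed$ rewrites, via the splittings, as a cofibration $\bd B_i\amalg (A_{i+1}/A_i) & \rCofib & B_i\amalg (B_{i+1}/B_i)\ed$, which forces $f_{i+1}$ to carry the summand $A_{i+1}/A_i$ into the complementary summand $B_{i+1}/B_i$ --- exactly the layered condition at consecutive levels, which lemma \ref{lem:layering}(1) then promotes to all $i<k$. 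Second, every weak equivalence in $F_n\SC(\C)$ is layered, since weak equivalences in $\SC(\C)$ are bijections on the underlying set data and hence respect summand decompositions. Third, conversely, any layered morphism with levelwise cofibrations is an $F_n$-cofibration, because under the splitting the relevant pushout map is $\mathrm{id}_{B_i}\amalg (f_{i+1}/f_i)$, which is a cofibration (with $f_{i+1}/f_i$ a cofibration by a standard gluing argument in $\SC(\C)$).

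Combining these, $L_n\SC(\C)$ is a subcategory of $F_n\SC(\C)$ containing every object, every cofibration, and every weak equivalence of $F_n\SC(\C)$, and its cofibrations and weak equivalences are exactly the levelwise ones; this gives the simplification statement and the characterization in the second sentence of the lemma. What remains is closure under pushouts along cofibrations and the gluing lemma. Given a pushout square with $\bd A & \rCofib & B\ed$ a layered cofibration and $\bd A & \rTo & C\ed$ layered, I would form the pushout $D$ levelwise in $\SC(\C)$ and use the splitting description to compute it strand by strand: at each level $D_{i+1}\cong D_i\amalg (D_{i+1}/D_i)$ with $D_{i+1}/D_i$ obtained as a pushout of strand pieces, from which $\bd B & \rTo & D\ed$ and $\bd C & \rCofib & D\ed$ are visibly layered. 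The gluing lemma reduces in the same way to the gluing lemma for $\SC(\C)$ applied on each strand.

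The main obstacle I anticipate is bookkeeping in the pushout step: a priori the layered condition is a compatibility at every pair $i<k$, not only at consecutive levels, and one might worry that it does not survive pushouts or base-change. The resolution is the splitting description above, which exhibits layered morphisms as exactly the ``diagonal'' morphisms with respect to the coproduct decomposition of each $B_{i+1}$ into its strands; once this is recognized, both closure under pushouts and the levelwise characterization of cofibrations and weak equivalences follow directly from the corresponding facts in $\SC(\C)$.
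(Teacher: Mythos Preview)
Your overall architecture matches the paper's: you isolate the same three facts (every $F_n$-cofibration is layered; layered plus levelwise cofibration implies $F_n$-cofibration; closure under pushouts), and the paper proves these as lemmas \ref{lem:cofiblayered}, \ref{lem:levelcofib}, \ref{lem:layeredpushout} before assembling them exactly as you do.

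There is one genuine gap. Your justification that weak equivalences are layered rests on the assertion that ``weak equivalences in $\SC(\C)$ are bijections on the underlying set data.'' This is false: an acyclic cofibration $\bd\{b\}&\rAcycCofib&\{a_1,a_2\}\ed$ coming from a two-piece cover of $b$ is a weak equivalence whose underlying index sets have different cardinalities. What is true is that a weak equivalence $f:\bd A&\rTo&B\ed$ has $\im_B A = B$ and (since in $\SC(\C)$ every weak equivalence is also a cofibration) carries disjoint summands of $A$ to disjoint subobjects of $B$; combining this with the fact that $f_1$ is a weak equivalence forces $\im_{B_2}(A_1)=\im_{B_2}(B_1)$, and then the layering condition follows. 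So your conclusion survives, but the stated reason does not. The paper avoids this computation entirely by a cleaner route: it shows that a levelwise weak equivalence in $F_n\SC(\C)$ is already an $F_n$-cofibration, since the induced map $\bd A_2\cup_{A_1}B_1&\rTo&B_2\ed$ is a weak equivalence by pushout-stability and 2-of-3, and weak equivalences in $\SC(\C)$ are cofibrations. Then lemma~\ref{lem:cofiblayered} applies.

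On pushout closure, the paper does not argue ``strand by strand'' but instead works directly with the image calculus $\im_{B_2}(-)$, checking explicitly that $\im_{X_2}(B_2/B_1)\cap\im_{X_2}(X_1)=\emptyset$ and $\im_{D_2}(X_2/X_1)\cap\im_{D_2}(D_1)=\emptyset$ for $X=B\cup_A C$. Your strand decomposition idea would work, but as written it presupposes that the pushout decomposes along strands, which is close to what you are trying to prove; the image calculation is more self-contained.
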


A morphism $A\rightarrow B \in F_n\SC(\C)$ is represented by a diagram
\begin{diagram}[small]
A_1 & \rCofib & A_2 & \rCofib & \cdots & \rCofib & A_n\\
\dTo && \dTo && && \dTo \\
B_1 & \rCofib & B_2 & \rCofib &\cdots & \rCofib & B_n
\end{diagram}
and by lemma \ref{lem:layering}(1) will be layered exactly when for each $i=1,\ldots,n-1$ the diagram
\begin{diagram}[small]
A_i/A_{i-1} & \rCofib & A_i \\
\dTo & & \dTo \\
B_i/B_{i-1} & \rCofib & B_i
\end{diagram}
commutes.  As each square is considered separately, for all of the proofs in this section we will assume that $n=2$, as for all other values of $n$ the proofs will be equivalent, and it saves on having an extra variable floating around.

\begin{lemma} \lbl{lem:levelcofib}
Any layered morphism which is levelwise a cofibration is a cofibration.
\end{lemma}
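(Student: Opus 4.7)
The plan is to follow the author's reduction to $n = 2$ (the general case being analogous) and then exploit the canonical strand decomposition of objects of $F_2\SC(\C)$. Thus we have a layered morphism $\varphi : A \to B$ whose components $f_1 : A_1 \to B_1$ and $f_2 : A_2 \to B_2$ are both cofibrations in $\SC(\C)$, and we must show that the induced map $B_1 \cup_{A_1} A_2 \to B_2$ is a cofibration.

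The key observation, already implicit in the combing discussion before Lemma \ref{lem:Sti}, is that each object of $F_2\SC(\C)$ canonically splits at the top level as $A_2 \cong \im_{A_2}(A_1) \amalg A_2/A_1$, with the cofibration $A_1 \hookrightarrow A_2$ factoring through the acyclic cofibration $A_1 \hookrightarrow \im_{A_2}(A_1)$ followed by inclusion as the first summand; analogously for $B$. By Lemma \ref{lem:layering}(2), the layered condition produces a commutative square relating $\im_{A_2}(A_1) \hookrightarrow A_2$ and $\im_{B_2}(B_1) \hookrightarrow B_2$, which, combined with the splitting, forces $f_2$ to decompose as the coproduct of a strand-$1$ map $\im_{A_2}(A_1) \to \im_{B_2}(B_1)$ and the cofiber map $f_2/f_1 : A_2/A_1 \to B_2/B_1$. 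Under this decomposition the pushout becomes
\[B_1 \cup_{A_1} A_2 \;\cong\; (B_1 \cup_{A_1} \im_{A_2}(A_1)) \amalg A_2/A_1,\]
since the map $A_1 \to A_2$ lands in the first summand.

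It then suffices to check that each summand maps by a cofibration into the corresponding summand of $B_2 \cong \im_{B_2}(B_1) \amalg B_2/B_1$. For the second summand, $f_2/f_1$ is a cofibration because $f_2$ is representable by a pure sub-map whose underlying set-map is injective, and the layered condition restricts this set-map to an injection on the complementary indices. For the first summand, $B_1 \cup_{A_1} \im_{A_2}(A_1)$ is the pushout of the acyclic cofibration $A_1 \hookrightarrow \im_{A_2}(A_1)$ along $f_1$, and the pullback analysis in the proof of Lemma \ref{lem:layering}(2) identifies the induced map into $\im_{B_2}(B_1)$ as the acyclic cofibration $B_1 \hookrightarrow \im_{B_2}(B_1)$. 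Since coproducts of cofibrations are cofibrations in $\SC(\C)$, the result follows. The main technical point is the bookkeeping at the level of representing diagrams in $\Tw(\C_p)$: verifying rigorously that the layered condition forces $f_2$ to split across the strand decomposition and that its representing pure sub-map restricts cleanly on each strand.
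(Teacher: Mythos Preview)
Your strategy is sound in outline but considerably more elaborate than the paper's, and two of your technical justifications are off. The paper bypasses the strand decomposition entirely: it uses the simpler identification $A_2\cup_{A_1}B_1 \cong (A_2/A_1)\amalg B_1$ (no $\im$ involved), observes that under this identification the layering condition forces $\varphi = j\amalg (f_2/f_1)$ where $j:B_1\rightarrowtail B_2$, and then shows $f_2/f_1$ is a cofibration by a one-line cancellation argument: in $\SC(\C)$, if $h$ and $hg$ are cofibrations then so is $g$, applied to $h=(B_2/B_1\rightarrowtail B_2)$ and $hg=(A_2/A_1\rightarrowtail A_2\rightarrowtail B_2)$. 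No set-map bookkeeping, no first-summand analysis.

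On your write-up specifically: first, cofibrations in $\SC(\C)$ are \emph{not} represented by pure sub-maps; a general morphism is a sub-map followed by a shuffle, and the cofibration condition is that the shuffle has injective set-map. Your sentence ``$f_2$ is representable by a pure sub-map whose underlying set-map is injective'' mischaracterizes this, though what you presumably intend (restricting the injective shuffle to complementary indices) is the right idea. Second, the map you need on the first summand is from the pushout $B_1\cup_{A_1}\im_{A_2}(A_1)$ into $\im_{B_2}(B_1)$, not the map $B_1\hookrightarrow\im_{B_2}(B_1)$ itself; identifying these requires an additional argument (e.g.\ $2$-out-of-$3$ plus the fact that weak equivalences in $\SC(\C)$ are cofibrations), which you have not supplied. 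Both points are repairable, but the paper's route avoids them altogether.
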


\begin{proof}
We want to show that if 
$$f_1,f_2: \begin{diagram} (\bd A_1 & \rCofib^i & A_2\ed) & \rCofib & (\bd B_1 & \rCofib^j & B_2\ed)
\end{diagram}$$
is layered, then the induced morphism $\varphi:A_2\cup_{A_1} B_1 \rightarrow B_2$ is a cofibration.  We know that $A_2\cup_{A_1}B_1 \cong (A_2/A_1)\amalg B_1$, and that $\varphi = j \amalg (f_2/f_1)$ (where the second part follows directly from the layering condition).  Thus it suffices to show that $f_2/f_1$ is a cofibration.  This follows from the more general statement that in $\SC(\C)$, given two composable morphisms $g,h$, if $h$ and $hg$ are cofibrations then so is $g$.  As $\bd A_2/A_1 & \rCofib & B_2\ed$ and $\bd B_2/B_1 & \rCofib & B_2\ed$ are both cofibrations, it follows that $f_2/f_1$ must be one as well.
\end{proof}

Now we turn our attention to showing that $L_n\SC(\C)$ is a simplification of $F_n\SC(\C)$.  We first develop a little bit of computational machinery for layering, which will allow us to work with cofibrations more easily.

Given any object $A=\SCob{a}{i}\in \SC(\C)$, we say that $A'$ is a \textsl{subobject} of $A$ if $A' = \{a_i\}_{i\in I'}$ for some subset $I'\subseteq I$.  If $A',A''$ are two subobjects of $A$, we will write $A'\cap A''$ for $\{a_i\}_{i\in I'\cap I''}$, and we will write $A'\subseteq A''$ if $I' \subseteq I''$.  Suppose that $f: \bd A &\rTo& B\ed$ is a morphism in $\SC(\C)$.  Pick a representation of this by a sub-map $p$ and a shuffle $\sigma$, and write $B = \SCob{b}{j}$.  Then $\im_BA = \{b_j\}_{j\in \im \sigma}$.  Note that this agrees with the previous definition of image when $f$ is a cofibration, and $\im_BA$ is a subobject of $B$.  If we write $A = A_1\amalg A_2$ then $A_1$ and $A_2$ are subobjects of $A$, and $\im_BA = \im_BA_1 \cup \im_BA_2$.  If $f$ were a cofibration, we also have $\im_BA_1 \cap \im_BA_2 = \emptyset$; if $f$ were a weak equivalence then $\im_BA = B$.  (For example, $\im_BA \cap (B/A) = \emptyset$.)  Given a second morphism $g:\bd B & \rTo & C\ed$, $\im_CA \subseteq \im_CB$.

Now consider a commutative square
$$f_1,f_2:\begin{diagram} (\bd A_1 & \rCofib & A_2\ed) & \rTo & (\bd B_1 & \rCofib & B_2\ed)\end{diagram}.$$
This square satisfies the layering condition exactly when $\im_{B_2}(A_2/A_1) \subseteq \im_{B_2}(B_2/B_1) = B_2/B_1$, or equivalently when $\im_{B_2}(A_2/A_1) \cap \im_{B_2}B_1 = \emptyset$.  We will use this restatement in our computations.

\begin{lemma} \lbl{lem:cofiblayered}
Cofibrations are layered.
\end{lemma}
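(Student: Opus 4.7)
The plan is to reduce to the case $n=2$ (as in the rest of the appendix) and then combine the restatement of the layering condition established just before the lemma with the definition of a cofibration in $F_n\SC(\C)$.

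More concretely, I would first unpack what it means for
\[f_1,f_2:\bd (\bd A_1 & \rCofib & A_2\ed) & \rTo & (\bd B_1 & \rCofib & B_2\ed)\ed\]
to be a cofibration: by the same Waldhausen-style definition used for $S_n$, this means the induced morphism
\[A_2\cup_{A_1} B_1 \longrightarrow B_2\]
is a cofibration in $\SC(\C)$. I would then identify $A_2\cup_{A_1} B_1$ with the disjoint union $(A_2/A_1)\amalg B_1$; this identification follows from the fact that $B_1$ and $A_2/A_1$ are complementary subobjects of the pushout (a standard computation in $\SC(\C)$, analogous to the one used in the proof of lemma \ref{lem:levelcofib}).

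Now I can invoke the restatement of layering given just above: the square is layered precisely when $\im_{B_2}(A_2/A_1) \cap \im_{B_2} B_1 = \emptyset$. Thus it remains to observe the following basic fact: if $g:X_1\amalg X_2 \rCofib Y$ is a cofibration in $\SC(\C)$, then $\im_Y X_1 \cap \im_Y X_2 = \emptyset$. This is immediate from the definition of a cofibration via a representing sub-map and shuffle, since such a shuffle is set-theoretically injective, so images of disjoint subobjects of the source remain disjoint in the target. Applying this fact with $X_1 = A_2/A_1$ and $X_2 = B_1$ gives the layering condition.

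The only real content is step 2, the identification $A_2\cup_{A_1}B_1\cong (A_2/A_1)\amalg B_1$ together with the observation that cofibrations preserve disjointness of subobject images; both are small set-theoretic verifications of the kind already being made throughout this section, so no genuine obstacle arises.
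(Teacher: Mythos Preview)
Your proposal is correct and follows essentially the same route as the paper: unpack the cofibration condition to get that $A_2\cup_{A_1}B_1 \rCofib B_2$ is a cofibration, identify the pushout with $(A_2/A_1)\amalg B_1$, and read off the disjointness $\im_{B_2}(A_2/A_1)\cap\im_{B_2}B_1=\emptyset$ from the injectivity of the shuffle. The only cosmetic difference is that the paper phrases the identification as an acyclic cofibration $(A_2/A_1)\amalg B_1 \rAcycCofib A_2\cup_{A_1}B_1$ rather than an isomorphism, but since composing with the cofibration into $B_2$ still yields a cofibration, the conclusion follows identically.
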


\begin{proof}
If $\bd A & \rCofib & B\ed$ is a cofibration, then by definition $\bd A_2\cup_{A_1}B_1 & \rCofib & B_2\ed$ is a cofibration.  But we have an acyclic cofibration $\bd (A_2/A_1) \amalg B_1 & \rAcycCofib & A_2\cup_{A_1} B_1\ed$, so $\im_{B_2}(A_2/A_1) \cap \im_{B_2} B_1 = \emptyset$, as desired.
\end{proof}

\begin{lemma} \lbl{lem:layeredpushout}
Layered morphisms are closed under pushouts.  More precisely, given any commutative square 
\begin{diagram}[small]
A & \rCofib & B \\
\dTo & & \dTo \\
C & \rTo & D
\end{diagram}
in which all morphisms are layered, the induced morphisms
\begin{diagram}
C & \rCofib & B\cup_A C &\qquad\ \qquad &B & \rTo & B\cup_A C & \qquad\ \qquad & B\cup_A C & \rTo & D
\end{diagram}
are all layered.
\end{lemma}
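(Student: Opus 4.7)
The plan is to dispatch the three claims by different tactics, exploiting the image-theoretic reformulation of layering introduced just before Lemma \ref{lem:cofiblayered}: a square $(X_1 \to X_2) \to (Y_1 \to Y_2)$ is layered iff $\im_{Y_2}(X_2/X_1) \cap \im_{Y_2}(Y_1) = \emptyset$. Since pushouts in $F_n\SC(\C)$ are computed levelwise, writing $E = B\cup_A C$ we have $E_1 = B_1 \cup_{A_1} C_1$ and $E_2 = B_2 \cup_{A_2} C_2$, both taken along cofibrations and therefore admitting clean decompositions in $\SC(\C)$. The morphism $C \to E$ is then immediate: as the pushout of the cofibration $A\to B$ it is itself a cofibration, hence layered by Lemma \ref{lem:cofiblayered}.

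For the map $B \to E$, I would argue by contradiction. Any element of $E_2$ lying in both $\im_{E_2}(B_2/B_1)$ and $\im_{E_2}(E_1)$ must arise from some $y \in B_2/B_1$ that becomes identified under the pushout with an element of $E_1 = B_1\cup_{A_1} C_1$. Because the pushout glues $B_2$ to $C_2$ precisely along $A_2$, such a $y$ must lie in $A_2$ and its image in $C_2$ must lie in $C_1$; and because $A_1\to B_1$ is a cofibration, $y \notin B_1$ forces $y \notin A_1$, so $y$ represents an element of $A_2/A_1$ whose image in $C_2$ hits $C_1$. This contradicts that $A \to C$ is layered. For $E \to D$ I would instead use that cofibers commute with pushouts along cofibrations, giving $E_2/E_1 \cong (B_2/B_1) \cup_{A_2/A_1} (C_2/C_1)$ and hence $\im_{D_2}(E_2/E_1) = \im_{D_2}(B_2/B_1) \cup \im_{D_2}(C_2/C_1)$; both summands are disjoint from $\im_{D_2}(D_1)$ by the layering hypotheses on $B \to D$ and $C \to D$, so the union is too.

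The main obstacle is the set-theoretic bookkeeping in the $B \to E$ case: objects of $\SC(\C)$ are presented by sub-map/shuffle pairs in $\Tw(\C_p)$ rather than by genuine sets, so one must pick representatives and verify that the identifications forced by the pushout really do factor through $A_2$. Once one passes to the underlying $\FinSet$-diagrams beneath $\Tw(\C_p)$ this reduces to the elementary argument sketched above; similar care is needed in the $E \to D$ case to justify the pushout-of-cofibers formula, but this is a routine diagram chase given that cofibers in $\SC(\C)$ are computed fiberwise in $\Tw(\C_p)$.
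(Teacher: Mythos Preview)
Your proposal is correct and follows essentially the same route as the paper: both arguments handle the three induced morphisms separately, invoke Lemma~\ref{lem:cofiblayered} for $C\to E$, use the image criterion together with the layering of $A\to C$ for $B\to E$, and use a decomposition of $E_2/E_1$ together with the layering of $B\to D$ and $C\to D$ for $E\to D$.

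The only substantive difference is in presentation. For $B\to E$ the paper works entirely in the image calculus, decomposing $\im_{E_2}(E_1)=\im_{E_2}(C_1)\cup\im_{E_2}(B_1/A_1)$ and dispatching the two terms separately; your contradiction argument jumps directly to ``$y$ must lie in $A_2$'', which tacitly assumes the $E_1$-element comes from the $C_1$-side. The $B_1/A_1$-side also needs to be ruled out (it is immediate, since $y\in B_2/B_1$ and the cofibration structure forces $\im_{B_2}(A_2)\cap\im_{B_2}(B_1)=\im_{B_2}(A_1)$), but the paper's systematic splitting makes this explicit and avoids the representative-picking you flag as an obstacle. For $E\to D$ you use the colimit-commutation identity $E_2/E_1\cong(B_2/B_1)\cup_{A_2/A_1}(C_2/C_1)$, whereas the paper computes directly from $E_i\cong(B_i/A_i)\amalg C_i$ to get $C_2/C_1\amalg(B_2/A_2)/(B_1/A_1)$; these yield the same image in $D_2$ and the conclusion is identical.
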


\begin{proof}
The first of these is clearly layered as it is a cofibration.  

Write $X_i = B_i\cup_{A_i}C_i$.  Keep in mind that for all $i$, we have an acyclic cofibration $\bd (B_i/A_i)\amalg C_i & \rAcycCofib & X_i\ed$.

In order to show that the second is layered we need to show that $\im_{X_2}(B_2/B_1) \cap \im_{X_2}(X_1) = \emptyset$.  We have
\begin{eqnarray*}
\im_{X_2}(B_2/B_1) \cap \im_{X_2}X_1 &=& \im_{X_2}(B_2/B_1) \cap (\im_{X_2}C_1 \cup \im_{X_2}(B_1/A_1)) \\
&=& (\im_{X_2}(B_2/B_1) \cap \im_{X_2}C_1) \cup (\im_{X_2}(B_2/B_1) \cap \im_{X_2}(B_1/A_1)).
\end{eqnarray*}
Consider the first of the two sets we are unioning.  By the definition of $X_2$, $\im_{X_2}C_2 \cap \im_{X_2}B_2 = \im_{X_2}A_2$.  Thus 
$$\im_{X_2}(B_2/B_1) \cap \im_{X_2}C_1 \subseteq \im_{X_2}(B_2/B_1) \cap \im_{X_2}C_2 \subseteq \im_{X_2}A_2.$$
On the other hand, 
$$\im_{X_2}(C_1) \cap \im_{X_2}(A_2) = \im_{X_2}(\im_{C_2}C_1 \cap \im_{C_2}A_2) = \im_{X_2}(\im_{C_2}A_1) = \im_{X_2}A_1$$
as $\bd A & \rTo & C\ed$ is layered.  Thus we want to show that $\im_{X_2}(B_2/B_1) \cap \im_{X_2}A_1 = \emptyset$.  It suffices to show this inside $B_2$, where it is obvious.  Now consider the second part. As $\bd B_1/A_1 & \rTo & X_2\ed$ and $\bd B_2/B_1 & \rTo & X_2\ed$ both factor through $B_2$, it suffices to show that $(B_2/B_1) \cap \im_{B_2}(B_1/A_1) = \emptyset$, which is clear by definition.

It remains to show that the last of these morphisms is layered.  In particular, we need to show that
$$\im_{D_2}(X_2/X_1)\cap \im_{D_2}(D_1) = \emptyset.$$
But it is easy to see that
\begin{eqnarray*}
\im_{D_2}(X_2/X_1) \cap \im_{D_2}(D_1) &=& \im_{D_2}((C_2 \amalg B_2/A_2)/(C_1 \amalg B_1/A_1)) \cap \im_{D_2}(D_1) \\
&=& \im_{D_2}(C_2/C_1 \amalg (B_2/A_2)/(B_1/A_1)) \cap \im_{D_2}(D_1) \\
&=& (\im_{D_2}(C_2/C_1) \cap \im_{D_2}(D_1)) \cup (\im_{D_2}((B_2/A_2)/(B_1/A_1)) \cap \im_{D_2}(D_1)).
\end{eqnarray*}
the first of these is empty because $C\rightarrow D$ is layered.  The second is empty because $\im_{D_2}((B_2/A_2)/(B_1/A_1)) \subseteq \im_{D_2}(B_2/B_1)$, and the intersection of this with $\im_{D_2}(D_1)$ is empty because $\bd B& \rTo & D\ed$ is layered.  So we are done.
\end{proof}
    
Now we are ready to prove lemma \ref{lem:Lnstructure}.

\begin{proof}[Proof of lemma \ref{lem:Lnstructure}]
Firstly we will show that all weak equivalences of $F_n\SC(\C)$ are layered.  In particular, it suffices to show that any weak equivalences of $F_n\SC(\C)$ is also a cofibration, since we already know by lemma \ref{lem:cofiblayered} that all cofibrations are layered.  In particular, if we have a a commutative square
\begin{diagram}[large]
(\bd A_1 & \rCofib & A_2\ed) & \rWeakEquiv & (\bd B_1 & \rCofib & B_2\ed)
\end{diagram}
we want to show that the induced morphism $\bd A_2\cup_{A_1} B_1 & \rTo & B_2\ed$ is a cofibration.  As weak equivalences are preserved under pushouts we know that $\bd A_2 & \rWeakEquiv  & A_2\cup_{A_1} B_1\ed$ is a weak equivalence, as is $\bd A_2 & \rWeakEquiv & B_2\ed$.  As weak equivalences satisfy 2-of-3 we are done.

All morphisms $\bd A & \rTo & *\ed$ are in $L_n\SC(\C)$, as these are trivially layered.  As lemma \ref{lem:layeredpushout} showed that $L_n\SC(\C)$ is closed under pushouts, we see that $L_n\SC(\C)$ is, in fact, a simplification of $F_n\SC(\C)$.  Weak equivalences of $L_n\SC(\C)$ are levelwise because weak equivalences in $F_n\SC(\C)$ are levelwise, and cofibrations are levelwise by lemma \ref{lem:levelcofib}.
\end{proof}

\bibliographystyle{plain}
\bibliography{IZ}

\begin{thebibliography}{1}

\bibitem{kreck05}
Matthias Kreck and Wolfgang L{\"u}ck.
\newblock {\em The {N}ovikov conjecture}, volume~33 of {\em Oberwolfach
  Seminars}.
\newblock Birkh\"auser Verlag, Basel, 2005.
\newblock Geometry and algebra.

\bibitem{quillen73}
Daniel Quillen.
\newblock Higher algebraic {$K$}-theory. {I}.
\newblock In {\em Algebraic {$K$}-theory, {I}: {H}igher {$K$}-theories ({P}roc.
  {C}onf., {B}attelle {M}emorial {I}nst., {S}eattle, {W}ash., 1972)}, pages
  85--147. Lecture Notes in Math., Vol. 341. Springer, Berlin, 1973.

\bibitem{toenvezzosi04}
Bertrand To{\"e}n and Gabriele Vezzosi.
\newblock A remark on {$K$}-theory and {$S$}-categories.
\newblock {\em Topology}, 43(4):765--791, 2004.

\bibitem{waldhausen78}
Friedhelm Waldhausen.
\newblock Algebraic {$K$}-theory of generalized free products. {I}, {II}.
\newblock {\em Ann. of Math. (2)}, 108(1):135--204, 1978.

\bibitem{waldhausen83}
Friedhelm Waldhausen.
\newblock Algebraic {$K$}-theory of spaces.
\newblock In {\em Algebraic and geometric topology ({N}ew {B}runswick,
  {N}.{J}., 1983)}, volume 1126 of {\em Lecture Notes in Math.}, pages
  318--419. Springer, Berlin, 1985.

\bibitem{zakharevich10}
Inna Zakharevich.
\newblock Scissors congruence as k-theory.
\newblock Unpublished: http://arxiv.org/abs/1101.3833v1.

\end{thebibliography}

\end{document}